\newcommand{\be}{\begin{equation*}}
\newcommand{\ee}{\end{equation*}}
\newcommand{\globalcolor}[1]{%
  \color{#1}\global\let\default@color\current@color
}
\tikzset{cross/.style={cross out, draw=black, fill=none, minimum size=2*(#1-\pgflinewidth), inner sep=0pt, outer sep=0pt}, cross/.default={2pt}}
\definecolor{blush}{rgb}{0.87, 0.36, 0.51}
	\definecolor{brightcerulean}{rgb}{0.11, 0.67, 0.84}
	\definecolor{greenryb}{rgb}{0.4, 0.69, 0.2}
\newif\ifdark
\definecolor{darkred}{rgb}{0.9,0.2,0.2}
\definecolor{darkblue}{rgb}{0.7,0.3,1}
\definecolor{darkgreen}{rgb}{0.1,0.9,0.1}
\definecolor{franck}{rgb}{0,0.8,1}
\definecolor{pagebackground}{rgb}{.15,.21,.18}
\definecolor{pageforeground}{rgb}{.84,.84,.85}
\definecolor{symbols}{rgb}{0,0.7,1}
\colorlet{connection}{red!80!black}
\colorlet{boxcolor}{blue!50}
\definecolor{darkred}{rgb}{0.7,0.1,0.1}
\definecolor{darkblue}{rgb}{0.4,0.1,0.8}
\definecolor{darkgreen}{rgb}{0.1,0.7,0.1}
\definecolor{franck}{rgb}{0,0,1}
\definecolor{pagebackground}{rgb}{1,1,1}
\definecolor{pageforeground}{rgb}{0,0,0}
\colorlet{symbols}{blue!90!black}
\colorlet{connection}{red!30!black}
\colorlet{boxcolor}{blue!50!black}
\def\slash{\leavevmode\unskip\kern0.18em/\penalty\exhyphenpenalty\kern0.18em}
\def\dash{\leavevmode\unskip\kern0.18em--\penalty\exhyphenpenalty\kern0.18em}
\DeclareMathAlphabet{\mathbbm}{U}{bbm}{m}{n}
\DeclareFontFamily{U}{BOONDOX-calo}{\skewchar\font=45 }
\DeclareFontShape{U}{BOONDOX-calo}{m}{n}{
  <-> s*[1.05] BOONDOX-r-calo}{}
\DeclareFontShape{U}{BOONDOX-calo}{b}{n}{
  <-> s*[1.05] BOONDOX-b-calo}{}
\DeclareMathAlphabet{\mcb}{U}{BOONDOX-calo}{m}{n}
\SetMathAlphabet{\mcb}{bold}{U}{BOONDOX-calo}{b}{n}
\setlist{noitemsep,topsep=4pt,leftmargin=1.5em}
\DeclareMathAlphabet{\mathbbm}{U}{bbm}{m}{n}
\DeclareMathAlphabet{\mcb}{U}{BOONDOX-calo}{m}{n}
\SetMathAlphabet{\mcb}{bold}{U}{BOONDOX-calo}{b}{n}
\DeclareFontFamily{U}{mathx}{\hyphenchar\font45}
\DeclareFontShape{U}{mathx}{m}{n}{
      <5> <6> <7> <8> <9> <10>
      <10.95> <12> <14.4> <17.28> <20.74> <24.88>
      mathx10
      }{}
\DeclareSymbolFont{mathx}{U}{mathx}{m}{n}
\DeclareMathSymbol{\bigtimes}{1}{mathx}{"91}
\providecommand{\figures}{false}
{ \ifthenelse{\equal{\figures}{false}} {#1}{\[ {\rm Figure \ missing !} \]} }{}
\def\id{\mathrm{id}}
\def\graft{\curvearrowright}
\def\ggraft{{\color{gray}\curvearrowright}}
\def\rgraft{{\color{red}\curvearrowright}}
\def\mcT{\mathcal{T}}
\tikzstyle{tinydots}=[dash pattern=on \pgflinewidth off \pgflinewidth]
\tikzstyle{superdense}=[dash pattern=on 4pt off 1pt]
\newcommand{\mcC}{\mathcal{C}}
\newcommand{\mcI}{\mathcal{I}}
\newcommand{\mcK}{\mathcal{K}}
\newcommand{\mcD}{\mathcal{D}}
\newcommand{\mcP}{\mathcal{P}}
\newcommand{\mcQ}{\mathcal{Q}}
\newcommand{\beq}{\begin{equation}}
\newcommand{\eeq}{\end{equation}}
\newcommand{\dmu}{\uparrow_\mu}
\newcommand{\downg}{\mathcolor{gray}{\downarrow}}
\newcommand{\T}{\mathbf{T}}
\newcommand{\mfe}{\mathfrak{e}}
\newcommand{\mfv}{\mathfrak{v}}
\newcommand{\mfg}{\mathfrak{g}}
\newcommand{\mfs}{\mathfrak{s}}
\def\miD{\mathit{D}}
\def\Labe{\mathfrak{e}}
\def\Labn{\mathfrak{n}}
\newcommand{\eval}{\Pi_{\eps,\mu}}
\newcommand{\heval}{\Pi_{\eps,\mu}^{R\times}}
\newcommand{\teval}{\Tilde{\Pi}_{\eps,\mu}^R}
\newcommand{\loceval}{\hat{\Pi}_{\eps,\mu}^R}
\newcommand{\locevalx}{\hat{\Pi}_{\eps,\mu,x}^R}
\newcommand{\hloceval}{\hat{\Pi}_{\eps,\mu}^{R\times}}
\newcommand{\hlocevalx}{\hat{\Pi}_{\eps,\mu,x}^{R\times}}
\newcommand{\D}{\partial}
\newcommand{\pem}{\phi_{\eps,\mu}}
\newcommand{\Fem}{F_{\eps,\mu}}
\newcommand{\Rem}{R_{\eps,\mu}}
\newcommand{\Mloc}{M_{\text{\tiny{loc}}}}
\newcommand\Item[1][]{%
  \ifx\relax#1\relax  \item \else \item[#1] \fi
  \abovedisplayskip=0pt\abovedisplayshortskip=0pt~\vspace*{-\baselineskip}}
  \newcommand{\TUpsilon}{\Tilde{\Upsilon}}
\def\${|\!|\!|}
\def\scal#1{{\langle#1\rangle}}
\def\Xig{{\color{gray}\Xi}}
\newcommand{\CIg}{\mathcolor{gray}{\mcI}}
\newenvironment{DIFnomarkup}{}{} 
\newfont{\indic}{bbmss12}
\def\Nabla_#1{\nabla_{\!#1}}
    \pgfmathsetlength{\pgf@xb}{\pgfkeysvalueof{/pgf/outer xsep}}%
    \pgfmathsetlength{\pgf@yb}{\pgfkeysvalueof{/pgf/outer ysep}}%
\def\symbol#1{\textcolor{symbols}{#1}}
\def\decorate#1#2{
        \ifnum#2>0
    		\foreach \count in {1,...,#2}{
	       	let
				\p1 = (sourcenode.center),
                \p2 = (sourcenode.east),
				\n1 = {\x2-\x1},
				\n2 = {1mm},
				\n3 = {(1.3+0.6*(\count-1))*\n1},
				\n4 = {0.7*\n1}
			in 
        		node[rectangle,fill=symbols,rotate=30,inner sep=0pt,minimum width=0.2*\n2,minimum height=\n2] at ($(sourcenode.center) + (\n3,\n4)$) {}
				}
		\fi
        \ifnum#1>0
    		\foreach \count in {1,...,#1}{
	       	let
				\p1 = (sourcenode.center),
                \p2 = (sourcenode.east),
				\n1 = {\x2-\x1},
				\n2 = {1mm},
				\n3 = {(1.3+0.6*(\count-1))*\n1},
				\n4 = {0.7*\n1}
			in 
        		node[rectangle,fill=symbols,rotate=-30,inner sep=0pt,minimum width=0.2*\n2,minimum height=\n2] at ($(sourcenode.center) + (-\n3,\n4)$) {}
				}
		\fi
}
\tikzset{
    dectriangle/.style 2 args={
        triangle,
        alias=sourcenode,
        append after command={\decorate{#1}{#2}}
    },
    dectriangle/.default={0}{0},
}
\tikzset{
	cross/.style={path picture={ 
  		\draw[symbols]
			(path picture bounding box.south east) -- (path picture bounding box.north west) (path picture bounding box.south west) -- (path picture bounding box.north east);
		}},
root/.style={circle,fill=green!50!black,inner sep=0pt, minimum size=1.2mm},
        dot/.style={circle,fill=pageforeground,inner sep=0pt, minimum size=1mm},
        dotred/.style={circle,fill=pageforeground!50!pagebackground,inner sep=0pt, minimum size=2mm},
        var/.style={circle,fill=pageforeground!10!pagebackground,draw=pageforeground,inner sep=0pt, minimum size=3mm},
        kernel/.style={semithick,shorten >=2pt,shorten <=2pt},
        kernels/.style={snake=zigzag,shorten >=2pt,shorten <=2pt,segment amplitude=1pt,segment length=4pt,line before snake=2pt,line after snake=5pt,},
        rho/.style={densely dashed,semithick,shorten >=2pt,shorten <=2pt},
           testfcn/.style={dotted,semithick,shorten >=2pt,shorten <=2pt},
        renorm/.style={shape=circle,fill=pagebackground,inner sep=1pt},
        labl/.style={shape=rectangle,fill=pagebackground,inner sep=1pt},
        xic/.style={very thin,circle,draw=symbols,fill=symbols,inner sep=0pt,minimum size=1.2mm},
        g/.style={very thin,rectangle,draw=symbols,fill=symbols!10!pagebackground,inner sep=0pt,minimum width=2.5mm,minimum height=1.2mm},
        xi/.style={very thin,circle,draw=symbols,fill=symbols!10!pagebackground,inner sep=0pt,minimum size=1.2mm},
	xies/.style={very thin,rectangle,fill=green!50!black!25,draw=symbols,inner sep=0pt,minimum size=1.1mm},
	xiesf/.style={very thin,rectangle,fill=green!50!black,draw=symbols,inner sep=0pt,minimum size=1.1mm},
        xix/.style={very thin,crosscircle,fill=symbols!10!pagebackground,draw=symbols,inner sep=0pt,minimum size=1.2mm},
        X/.style={very thin,cross,rectangle,fill=pagebackground,draw=symbols,inner sep=0pt,minimum size=1.2mm},
	xib/.style={thin,circle,fill=symbols!10!pagebackground,draw=symbols,inner sep=0pt,minimum size=1.6mm},
	xie/.style={thin,circle,fill=green!50!black,draw=symbols,inner sep=0pt,minimum size=1.6mm},
	xid/.style={thin,circle,fill=symbols,draw=symbols,inner sep=0pt,minimum size=1.6mm},
	xibx/.style={thin,crosscircle,fill=symbols!10!pagebackground,draw=symbols,inner sep=0pt,minimum size=1.6mm},
	kernels2/.style={very thick,draw=connection,segment length=12pt},
	keps/.style={thin,draw=symbols,->},
	kepspr/.style={thick,draw=connection,->},
	krho/.style={thin,draw=symbols,superdense,->},
	krhopr/.style={thick,draw=connection,superdense},
	triangle/.style = { regular polygon, regular polygon sides=3},
	not/.style={thin,circle,draw=connection,fill=connection,inner sep=0pt,minimum size=0.5mm},
	diff/.style = {very thin,draw=symbols,triangle,fill=red!50!black,inner sep=0pt,minimum size=1.6mm},
	diff1/.style = {very thin,dectriangle={1}{0},fill=red!50!black,draw=symbols,inner sep=0pt,minimum size=1.6mm},
	diff2/.style = {very thin,dectriangle={1}{1},fill=red!50!black,draw=symbols,inner sep=0pt,minimum size=1.6mm},
		diffmini/.style = {very thin,rectangle,fill=black,draw=black,inner sep=0pt,minimum size=0.75mm},
	 kernelsmod/.style={very thick,draw=connection,segment length=12pt},
	 rec/.style = {very thin,rectangle,fill=black,draw=black,inner sep=0pt,minimum size=2mm},
	cerc/.style={very thin,circle,draw=black,fill=symbols,inner sep=0pt,minimum size=2mm},
	stars/.style={very thin,star,star points=6,star point ratio=0.5, draw=black,fill=red,inner sep=0pt,minimum size=0.7mm},
	>=stealth,
        }
        \tikzset{
root/.style={circle,fill=black!50,inner sep=0pt, minimum size=3mm},
        circ/.style={circle,fill=white,draw=black,very thin,inner sep=.5pt, minimum size=1.2mm},
        round1/.style={fill=white,outer sep = 0,inner sep=2pt,rounded corners=1mm,draw,text=black,thin,minimum size=1.2mm},
          circ1/.style={circle,fill=red!10,draw=red,very thin,inner sep=.5pt, minimum size=1.2mm},
        rect/.style={fill=white,outer sep = 0,inner sep=2pt,rectangle,draw,text=black,thin,minimum size=1.2mm},
        rect1/.style={fill=white,outer sep = 0,inner sep=2pt,rectangle,draw,text=black,thin,minimum size=1.2mm},
        round2/.style={fill=red!10,outer sep = 0,inner sep=2pt,rounded corners=1mm,draw,text=black,thin,minimum size=1.2mm},
       round3/.style={fill=blue!10,outer sep = 0,inner sep=2pt,rounded corners=1mm,draw,text=black,thin,minimum size=1.2mm}, 
        rect2/.style={fill=black!10,outer sep = 0,inner sep=2pt,rectangle,draw,text=black,thin,minimum size=1.2mm},
        dot/.style={circle,fill=black,inner sep=0pt, minimum size=1.2mm},
        dotred/.style={circle,fill=black!50,inner sep=0pt, minimum size=2mm},
        var/.style={circle,fill=black!10,draw=black,inner sep=0pt, minimum size=3mm},
        kernel/.style={semithick,shorten >=2pt,shorten <=2pt},
         diag/.style={thin,shorten >=4pt,shorten <=4pt},
        kernel1/.style={thick},
        kernels/.style={snake=zigzag,shorten >=2pt,shorten <=2pt,segment amplitude=1pt,segment length=4pt,line before snake=2pt,line after snake=5pt,},
		kernels1/.style={snake=zigzag,segment amplitude=0.5pt,segment length=2pt},
		rho1/.style={densely dotted,semithick},
        rho/.style={densely dashed,semithick,shorten >=2pt,shorten <=2pt},
           testfcn/.style={dotted,semithick,shorten >=2pt,shorten <=2pt},
           visible/.style={draw, circle, fill, inner sep=0.25ex},
        renorm/.style={shape=circle,fill=white,inner sep=1pt},
        labl/.style={shape=rectangle,fill=white,inner sep=1pt},
        xic/.style={very thin,circle,fill=symbols,draw=black,inner sep=0pt,minimum size=1.2mm},
        xi/.style={very thin,circle,fill=blue!10,draw=black,inner sep=0pt,minimum size=1.2mm},
	xib/.style={very thin,circle,fill=blue!10,draw=black,inner sep=0pt,minimum size=1.6mm},
	xie/.style={very thin,circle,fill=green!50!black,draw=black,inner sep=0pt,minimum size=1mm},
	xid/.style={very thin,circle,fill=symbols,draw=black,inner sep=0pt,minimum size=1.6mm},
	edgetype/.style={very thin,circle,draw=black,inner sep=0pt,minimum size=5mm},
	nodetype/.style={very thick,circle,draw=black,inner sep=0pt,minimum size=5mm},
	kernels2/.style={very thick,draw=connection,segment length=12pt},
clean/.style={thin,circle,fill=black,inner sep=0pt,minimum size=1mm},	not/.style={thin,circle,fill=symbols,draw=connection,fill=connection,inner sep=0pt,minimum size=0.8mm},
	>=stealth,
        }
\def\DeclareSymbol#1#2#3{%
	\expandafter\gdef\csname MH@symb@#1\endcsname{\tikzsetnextfilename{symbol#1}%
	\tikz[baseline=#2,scale=0.15,draw=symbols,line join=round]{#3}}%
	\expandafter\gdef\csname MH@symb@#1s\endcsname{\scalebox{0.75}{\tikzsetnextfilename{symbol#1}%
	\tikz[baseline=#2,scale=0.15,draw=symbols,line join=round]{#3}}}%
	\expandafter\gdef\csname MH@symb@#1ss\endcsname{\scalebox{0.65}{\tikzsetnextfilename{symbol#1}%
	\tikz[baseline=#2,scale=0.15,draw=symbols,line join=round]{#3}}}%
	}
\def\<#1>{\ifthenelse{\boolean{mmode}}{\mathchoice{\csname MH@symb@#1\endcsname}{\csname MH@symb@#1\endcsname}{\csname MH@symb@#1s\endcsname}{\csname MH@symb@#1ss\endcsname}}{\csname MH@symb@#1\endcsname}}
 \def\1{\mathbf{\symbol{1}}}
\def\one{\mathbf{1}}
\def\eps{\varepsilon}
\DeclareMathAlphabet{\mathpzc}{OT1}{pzc}{m}{it}
\let\eps\varepsilon
\def\eqref#1{(\ref{#1})}
\newcommand*{\bigcdot}{}
\DeclareRobustCommand*{\bigcdot}{%
  \mathbin{\mathpalette\bigcdot@{}}%
}
\newcommand*{\bigcdot@scalefactor}{.5}
\newcommand*{\bigcdot@widthfactor}{1.15}
\newcommand*{\bigcdot@}[2]{%
  \sbox0{$#1\vcenter{}$}
  \sbox2{$#1\cdot\m@th$}%
  \hbox to \bigcdot@widthfactor\wd2{%
    \hfil
    \raise\ht0\hbox{%
      \scalebox{\bigcdot@scalefactor}{%
        \lower\ht0\hbox{$#1\bullet\m@th$}%
      }%
    }%
    \hfil
  }%
}
\def\two{{\<generic>\kern0.05em\<genericb>}}
\def\twoI{{\<Ito>\kern0.05em\<Itob>}}
\def\mail#1{\burlalt{#1}{mailto:#1}}
\begin{document}

\title{Renormalisation in the flow approach for singular SPDEs}

\author{Yvain Bruned and Aurélien Minguella}
\institute{ 
 Universite de Lorraine, CNRS, IECL, F-54000 Nancy, France
  \\
Email:\ \begin{minipage}[t]{\linewidth}
\mail{yvain.bruned@univ-lorraine.fr}
\\ \mail{aurelien.minguella@univ-lorraine.fr}
\end{minipage}}

\maketitle 

\begin{abstract}
In this work, we study the renormalisation of singular SPDEs in the flow approach recently developed by Duch. We introduce a general ansatz based on decorated trees for the solution of the flow equation. The ansatz is renormalised in a recursive way, in the sense of the trees, via local extractions introduced for regularity structures. We derive the renormalised equation from this ansatz and show that the renormalisation scheme is identical to that appearing in the context of regularity structures, thus matching the BPHZ renormalisation. 
\end{abstract}
\setcounter{tocdepth}{2}
\setcounter{secnumdepth}{4}
\tableofcontents
\newpage

\section{Introduction}

\quad \, The last decade has seen major progress in singular stochastic partial differential equations (SPDEs) with two solutions theories: regularity structures \cite{reg} and paracontrolled calculus \cite{GIP15}. They both allow to construct iterated stochastic integrals coming from the iteration of Duhamel's formula. The most involved part in these theories is to renormalise these terms in order to get well-defined distributions. In this direction, in the context of regularity structures, the work \cite{BHZ} has developed a heavy algebraic machinery based on Hopf algebras yielding black box theorems that allow to tackle very broad classes of equations. With \cite{BCCH, CH16}, the task of showing well-posedness for general subcritical SPDEs was completed. These methods are based on the renormalisation of Feynman diagrams arising from perturbative quantum field theory. The procedure is inspired by the famous BPHZ renormalisation \cite{BP57,KH69,WZ69}.\\

Recently, in \cite{Duc21}, Duch proposed a new solution theory to SPDEs. It is inspired by the continuous renormalisation group method introduced by Polchinski \cite{P84}, sometimes referred to as the Polchinski flow. See \cite{M03} for a review of this method in physics. It is based on a recursive construction of the stochastic terms using a truncated version of the Polchinski equation. By deriving a similar construction on the cumulants of these stochastic terms, Duch manages to prove inductively the so-called stochastic estimates, that are essential to invoke a fixed-point argument to show the well-posedness of SPDEs. More precisely, this is done by defining a continuous flow of scales, indexed by a parameter $\mu\in [0,1]$, that suppresses the small scales. Such an approach had already been used in the context of singular SPDEs by Kupiainen and Marcozzi in \cite{K16,KM17}. However, it was based on a discrete renormalisation group, \textit{i.e.} the scales where indexed by a discrete parameter $\mu_n$. The authors have been able to tackle the $\phi^4_3$ and the generalized KPZ equations. Nevertheless, this technique did not encompass equations closer to criticality, as the computations had to be done by hand and would have become intractable. The main contribution of \cite{Duc21} was to bring the continuity in the renormalisation group, opening the path to treating the full subcritical regime. For a smooth introduction to the method, we refer to the lecture notes \cite{Duc23}.\\

In this work, we take the opposite view of the works mentioned above. We choose to clarify the algebraic side of the method. The purpose of this is to compare the renormalisation procedure of the flow approach with a known one, namely the one in regularity structures. As a point of comparison, we choose the recursive framework first developed in \cite{BR18} and perfected in \cite{BB21b}. The word recursive in this context refers to the depth of the trees and not the Polchinski flow recursion, to remove any doubt. The renormalisation involves then local extractions at the root of the trees. We draw a clear link between the renormalisation in the two contexts. However, we do not show the well-posedness for SPDEs, as this result has already been claimed in very general cases in the papers cited in this introduction. We rather shed a new light on the hidden combinatorics behind the method.\\

Let us mention other recent works using the Polchinski flow. Duch reused his method in an elliptic context \cite{Duc22} (the first paper \cite{Duc21} being in the parabolic context), as well as in a non-commutative framework with \cite{Duc24} to construct the Gross-Neveu model. With other co-authors, they construct the $\phi^4_3$ field in the full subcritical regime using parabolic stochastic quantisation in \cite{DGR23}. Chandra and Ferdinand proposed to use elementary differentials in \cite{CF24a} in order to deal with equations having non-polynomial non-linearities. They applied this technique again in \cite{CF24b} to show how the flow approach can be used to prove the well-posedness of rough differential equations. Outside of singular SPDEs strictly speaking, the Polchinski flow has been used to show fast convergence to equilibrium of some spin systems arising from statistical physics and euclidean field theory, with log-Sobolev inequalities by Bauerschmidt and Bodineau \cite{BB21}. We refer to the useful lecture notes \cite{BBD24}. Before other contributions, this seminal work was dealing with the Sine-Gordon model. Very close techniques have also been used by Gubinelli and Meyer in \cite{Gub24} to propose a novel method of stochastic quantisation of this model based on forward-backward SDEs, allowing to show many relevant results on the constructed measure.\\

Let us be more specific about the techniques of the present work. We study the renormalisation of very general parabolic equations of the form
\begin{equation}\label{eqintro}
\begin{split}
(\D_t-\Delta)\psi&=g(\psi,\nabla\psi,\dots,\nabla^q\psi)+f(\psi,\nabla\psi,\dots,\nabla^p\psi)\xi,\\
\psi(0,\cdot)&=\psi^0,
\end{split}
\end{equation}
on $\R_+\times\T^d$, with fixed $p,q<2$, which is supposed to be subcritical. $\xi$ denotes a Gaussian noise. It covers a very large class of equations such as the $\phi^4$ equation
\begin{equation}
(\D_t-\Delta)\psi=-\psi^3+\xi,
\end{equation}
or the generalized KPZ equation
\begin{equation}
(\D_t-\Delta)\psi=g(\psi)(\D_x\psi)^2+f(\psi)\xi.
\end{equation}
Note that we only consider scalar-valued equations, but our work could be easily adapted, following the lines of \cite{BB21b}, to systems of equations, in order to tackle even more general equations, such as the geometric stochastic heat equation studied in \cite{BGHZ}. One has to carry more decorations in the decorated trees. We choose not to write explicitly such a framework in the interest of brevity. This remark is what leads us to choose trees as main combinatorial objects. If we denote $G$, the Green function associated to the parabolic operator in the equations above, the main idea of the flow approach is to introduce a family of kernels $(G_\mu)$ interpolated along the scale $\mu\in[0,1]$, and satisfying $G_0=G$, and $G_1$ is smooth, as well as some support properties \ref{cutscale}, that represent the fact that $G_\mu$ cuts the small scales. More details about the flow approach are given in Section \eqref{flownut} below. We also denote $\dot G_\mu=\D_\mu G_\mu$. Writing $S[\psi]$ for the force term on the right-hand side of the equation \eqref{eqintro}, the flow approach creates an interpolation $S_\mu[\psi]$ satisfying $S_0[\psi]=S[\psi]$. Note that in this introduction, we ignore the trick that is necessary to take the initial condition into account in the parabolic case. Thus the notations here might differ a bit from the rest of the paper, and will stay a bit vague to gain in readability. We make an ansatz of the general form
\begin{equation}
S_\mu[\psi]=\sum_{\tau\in T_0^*}\frac{1}{S(\tau)}(\Pi_\mu^R\tau)[\psi],
\end{equation}
where $T_0^*$ is a set of decorated trees (we give a comprehensive introduction to these objects in Section \ref{trees}), $S(\tau)$ is a combinatorial factor, and $\Pi_\mu^R$ is what we call an evaluation map. Before saying more about this object, let us continue with the flow method. The main idea is to define $S_\mu$ so that it solves the Polchinski equation (see Equation \eqref{Poleq} in the next section for a precise statement),
\begin{equation}
\D_\mu S_\mu+DS_\mu\cdot \dot G_\mu * S_\mu=0,
\end{equation}
or at least a truncated version of it. In the seminal works by Duch \cite{Duc21, Duc22}, the coefficients $\Pi_\mu^R\tau$ are constructed recursively by imposing that they solve this equation. This is done formally by grafting these coefficients one onto another. The renormalisation is then added at the beginning of the induction so that the following terms are also renormalised.\\
Our main contribution is actually to give an explicit definition of the evaluation map $\Pi_\mu^R$. This definition is strongly inspired by the recursive (in the sense of the trees) framework of \cite{BR18,BB21b}. The major difference is that unlike in regularity structures, the terms are not localized, \textit{i.e.} the function $\psi$ appears inside the tree, where in regularity structures it has been taken out (see Example \ref{exeval} for clarity). Once this definition, and the ansatz, are set, the subsequent properties are derived quite naturally. It is defined recursively in the following way (for a complete definition see \ref{eval})
\begin{equation}
\Pi_\mu^R\tau=(\ell\Upsilon\otimes\Pi_\mu^{R\times})(\Mloc\otimes\mathrm{id})\Delta_r.
\end{equation}
Let us take a moment to explain the objects appearing here. The map $\Delta_r$ is an extraction-contraction coproduct that acts on trees by extracting sub-trees at the root and contracting the remaining part. It is the local (in the sense of the trees) version of the general negative coproduct defined in \cite{BHZ}, that acts similarly but not only at the root. The map $\Mloc$ is what we call the localization map. It formally takes a tree and inserts polynomials everywhere. We use it to bring in the evaluation map the localisation of the procedure, and that is done in the previous works not in the beginning but at the very end. The linear map $\ell\Upsilon$ is composed of a character $\ell$ that vanishes on trees of non-negative degree, so that only the trees of negative degree (corresponding to the diverging part of the associated Feynman diagrams) are used in the renormalisation, and of the elementary differentials $\Upsilon$ that are functionals in $\psi$. We give a detailed definition of these objects in Section \ref{eldiff}. One of the main advantages of using elementary differentials is that it allows a lot of freedom in the definition of the sets of trees involved, since $\Upsilon$ is going to vanish on unnecessary trees, then permitting to not write a precise rule to construct the family of trees. Finally, the map $\Pi_\mu^{R\times}$ is a multiplicative evaluation map giving the rest of the construction to the induction. The functionals $\Upsilon$ are then being integrated over.
More technically, we introduce in Section \ref{abstractflow} a linear operator $\dmu$ acting on trees by changing the decoration of some edges representing the kernel $\D^a(G-G_\mu)$ to the one representing $\D^a\dot G_\mu$ (drawn in red). We show  a commutation relation in Proposition \ref{commutation}
\begin{equation}
\D_\mu\Pi_\mu^R=\Pi_\mu^R\dmu.
\end{equation}
Note that, according to our definitions, if no renormalisation was involved, this relation would be trivial. We also introduce, as it has already been done in previous works, a bilinear operator $B$ corresponding to the Fréchet term in the Polchinski equation (see Definition \ref{defb}). We prove a morphism property in Proposition \ref{graft} with the grafting operators
\begin{equation}
B(\Pi_\mu^R\sigma,\Pi_\mu^R\tau)=\sum_{a\in\N^{d+1}}\Pi_\mu^R(\sigma\rgraft_a\tau).
\end{equation}
Note that the last sum is in fact finite, as the decorations on the edges of the trees cannot be of order more that $q$, that was fixed above. It will be the case for most of the sums in this paper, but we prefer this lighter notation. The graft $\rgraft_a$ represents grafting with an edge that represents the kernel $\D^a\dot G_\mu$. A subtle phenomenon already occurs in this expression. It roughly says that when taking to renormalised terms, grafting one onto another gives the right renormalised tree, with in the middle a red edge. The renormalisation does not act on this edge. More precisely, during the extraction procedure of $\Delta_r$, the red edges are left untouched. What could be seen as a mere algebraic trick has in fact an analytical interpretation.\\

With the help of these two properties, we can show that our definition of $\Pi_\mu^R$ satisfies a flow equation on the coefficients (see Proposition \ref{flowcoeff}) that is a rewriting of the equation that defines the force coefficients. One can then make the following observation: we define explicitly coefficients through an evaluation map. These coefficients satisfy the same inductive relation and initial conditions as in the original method. Therefore they indeed match the other construction, but where in the original version the renormalisation was implicit, it is clearly identified in our case.\\

We also show some subsequent results. In Section \ref{coherence}, we show that our ansatz defined independently of the Polchinski equation, actually solves it, or at least a truncated version. This sanity check may be viewed as the counterpart of the coherence result proved in \cite{BCCH} in the context of regularity structures. The result is much simpler in the present context. We show in Theorem \ref{coherence} that the ansatz almost satisfies the Polchinski equation, in the sense that it satisfies the following truncated version:
\begin{equation}
\D_\mu S_\mu^\gamma+\mcP_{\gamma-\lambda}(D S_\mu^\gamma\cdot\dot G_\mu* S_\mu^\gamma)=0,
\end{equation}
where $S_\mu^\gamma$ is a truncated version of the ansatz, at a given level $\gamma$, and $\mcP_{\gamma-\lambda}$ is a projector on some space generated by the $\Pi_\mu^R\tau$'s. We give a quick comparison in Remark \ref{comparison} of our method with the diagram-free approach in regularity structures introduced first in \cite{LOTP} for multi-indices. For decorated trees, one can look at the references \cite{BN23,BB23,HS23,BH23}. We display strong similarities, both at the level of some algebraic relations as well as in the global idea of the method.\\

In Section \ref{backspde}, we explain how the renormalisation of the coefficients expresses at the level of the SPDE. We find back in Theorem \ref{renormalised_equation} the known result in the context of regularity structures. It is the counterpart of another important result of \cite{BCCH}, reproved with the recursive framework in \cite{BB21b}. Similar ideas have also been used in the context of multi-indices (first introduced for singular SPDEs in \cite{OSSW}) in \cite{BL24} relying on the algebraic construction proposed in \cite{LOT}. Our proof is once again surprisingly elementary. It is yet another way to check that the definition of the evaulation map is the right one. Formally, we show that the renormalised SPDE takes the form
\begin{equation}
(\D_t-\Delta)\phi=g(\phi,\dots,\nabla^q\phi)+f(\phi,\dots,\nabla^p\phi)\xi+\sum_{\sigma}\frac{\ell(\sigma)}{S(\sigma)}\Upsilon[\sigma][\phi],
\end{equation}
where the last sum is indexed by a set of trees of negative degree including polynomials.\\

In Section \ref{bphz}, we perform a localization of the evaluation map in Theorem \ref{localization}, as it is done classically in the initialization of the flow on trees of negative degree (also called relevant terms). Concretely speaking, we take out the functional parameters stuck in the convolutions to place them at the root of the trees, representing the stochastic terms, accordingly to what is done in regularity structures. This result allows us to make a link in Theorem \ref{bphztheorem} with the BPHZ renormalisation by choosing a precise character $\ell_{ \text{\tiny{BPHZ}}}$, being the same one as in regularity structures, thus concluding the characterization of the renormalization in the flow approach.\\

Our main results can be summarised as follows:
\begin{itemize}
	\item Definition \ref{eval}:  General decorated trees ansatz for the flow equation. 
	\item Proposition \ref{commutation}: Commutation relation.
	\item Proposition \ref{graft}: Identity connecting the operator $B$ and the grafting operator.
	\item Theorem \ref{coherence}:  Coherence property, the general ansatz solves the flow equation.
	\item Theorem \ref{renormalised_equation}: Renormalised equation.
	\item Theorems \ref{localization} and \ref{bphztheorem}: Localization of the evaluation map and selection of the character.
\end{itemize}

\subsection*{Acknowledgements}

{\small
	Y.B. and A.M. gratefully acknowledge funding support from the European Research Council (ERC) through the ERC Starting Grant Low Regularity Dynamics via Decorated Trees (LoRDeT), grant agreement No.\ 101075208. Views and opinions expressed are however those of the author(s) only and do not necessarily reflect those of the European Union or the European Research Council. Neither the European Union nor the granting authority can be held responsible for them.  Y.B. also thanks the ``Institut des Hautes Études Scientifiques" (IHES) for a long research stay from 7th of January to 21st of March 2025, where part of this work was written. We also thank the anonymous referee for useful remarks that helped to improve the clarity of the paper.
}

\section{The flow approach in a nutshell}\label{flownut}

We try to give a quick review of the flow method developed in \cite{Duc22}, and we will follow quite closely the introduction of \cite{CF24a}. We want to solve on $[0,1]\times\T^d$ a SPDE of the form 
\begin{equation}\label{maineq}
\begin{split}
L\psi=S[\psi],\quad
&\psi(0,\cdot)=\psi^0,
\end{split}
\end{equation}
with $L$ is a parabolic fractional operator of the form $\D_t-\Delta$ and $S$ has the general form
\begin{equation}
S[\psi] = 	g(x,\psi,\nabla\psi,\dots,\nabla^q\psi)+f(\psi,\nabla\psi,\dots,\nabla^p\psi)\xi.
\end{equation}
In the case of $S$ being a gKPZ-type functional, one has 
\begin{equation}
S[\psi]=b(\psi)+\sum_{i=1}^d d_i(\psi)\D_i\psi+\sum_{i,j=1}^dg_{ij}(\psi)\D_i\psi\D_j\psi+h(\psi)\xi,
\end{equation}
where $\xi$ is a mean-zero Gaussian noise with covariance given by
\begin{equation}
\E[\xi(t,x)\xi(0,0)]=\delta(t)(1-\Delta)^{1-d/2-\alpha}(x),
\end{equation}
where $\delta$ is the Dirac distribution on $\R$, and $\alpha\in(0,1]$ for $d\geq2$ or $\alpha\in(1/4,1]$ if $d=1$. With this choice of covariance, the noise is in the Hölder-Besov space $\mcC^{-2+\alpha-\kappa}(\R\times\T^d)$, for every $\kappa>0$.
\begin{remark}
We made the choice to use the same setting as in \cite{CF24a} for simplicity. Otherwise, one could have chosen to work with a fractional Laplacian and a white noise, as done in the original works by Duch \cite{Duc21, Duc22}, but it involves a number of technical difficulties. While these issues mainly arise in the proofs of analytic estimates and do not play a significant role in the present work which focuses on algebraic aspects, they are nonetheless relevant for the overall well-posedness of the framework.
\end{remark}
 We need to take into account the initial condition. For this purpose, we let, by overloading the notation, $\psi^0(t,x)=\delta(t)\psi^0(x)$. We consider a mollification of the noise $\xi_\eps=\rho_\eps *\xi$, for an $\eps>0$, and write a mollified version of our main equation \eqref{maineq}
\begin{equation}
\psi_\eps=G*(\one^0_{(0,\infty)}S_\eps[\psi_\eps]+\psi^0_\eps),
\end{equation}
where $G$ is the Green kernel associated with our differential operator $L$. We have also set $\psi_\eps^0=\rho_\eps*\psi^0$. Note that, in this paper, we will not consider the case $\eps\to0$. Indeed, our focus is on the algebraic part of the method and we want then to avoid analytical complications. We have denoted, for any $f:[0,1]\times\T^d\to\R$, $\one_{(0,\infty)}^0f(t,x)=\one_{(0,\infty)}(t)f(t,x)$. Introducing $\phi_\eps=\psi_\eps-G*\psi^0_\eps$, we can write
\begin{equation}
\phi_\eps=G*F_\eps[\phi_\eps],
\end{equation}
where we have used the functional $F_\eps[\cdot]=\one_{(0,\infty)}^0(S_\eps[\cdot+G*\psi^0])$. The main idea is then to introduce a flow of scales. For this, we let, for $\mu\in[0,1]$, 
\begin{equs}
G_\mu(t,x) =  \chi_{\mu}(t) G(t,x)
 \end{equs} 
where $\chi_{\mu}(t) = \chi(t/\mu^2)$ with the convention $ \chi_{0}(t) = 1 $. Here, we have supposed that the time $t$ belongs to $[0,1]$. The map $\chi : \R_+$ is a fixed smooth and non-decreasing function  such that $\chi |_{[0,1]} = 0$ and $\chi |_{[2,+\infty)} = 1$. In particular, $G_0=G$. We also denote $\dot G_\mu=\D_\mu G_\mu$. 
One has the following key properties on the support of $G_{\mu}$
\begin{equs}\label{cutscale}
	\mathrm{Supp }\, G_{\mu} \subset [\mu^2,1] \times \T^d, \quad \mathrm{Supp } \, \dot{G}_{\mu} \subset [\mu^2,2\mu^2] \times \T^d.
\end{equs}
This means $ \dot{G}_{\mu} $ is at much smaller scale than $  G_{\mu}$ and increasing the degree, or homogeneity, in $\mu$ of an iterated integral in comparison to $G_{\mu}$.
We can now define the so-called coarse-grained process
\begin{equation} \label{grained_process}
\pem=G_\mu*F_\eps[\phi_\eps].
\end{equation}
We further let
\begin{equation} \label{scale_decomposition}  F_\eps[\phi_\eps] = \Fem[\pem]+\Rem,
\end{equation}
where $\Fem$ is to be chosen wisely later with the so-called ansatz, and $\Rem$ is a remainder. We impose $F_{\eps,0}=F_\eps$. Setting such a remainder is not mandatory, but it will let more freedom in our analysis. One can notice that the original paper \cite{Duc22} does not make this choice. 

\begin{proposition}
	The maps $ \varphi_{\eps,\mu} $ and $R_{\eps,\mu}$ are solutions of \eqref{grained_process} and \eqref{scale_decomposition},
if they solve
\begin{equation}
	\displaystyle\begin{cases}
		\pem=-\displaystyle\int_\mu^1\dot{G}_\nu*(F_{\eps,\nu}[\phi_{\eps,\nu}]+R_{\eps,\nu})d\nu\\
		R_{\eps,\mu}=\displaystyle-\int_0^\mu(\D_\nu+DF_{\eps,\nu}\cdot\dot G_\nu)F_{\eps,\nu}[\phi_{\eps,\nu}]+DF_{\eps,\nu}\cdot\dot G_\nu* R_{\eps,\nu}d\nu.
	\end{cases}
\end{equation}
\end{proposition}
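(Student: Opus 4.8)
The statement is an equivalence: the pair $(\varphi_{\eps,\mu}, R_{\eps,\mu})$ solves the defining relations \eqref{grained_process} and \eqref{scale_decomposition} if and only if it solves the displayed system of integro-differential equations. The natural approach is to differentiate the definitions in $\mu$, rewrite everything as a first-order ODE system in $\mu$ (with values in a space of functions of $(t,x)$), and then integrate back using the boundary data at $\mu=0$ and $\mu=1$. Concretely, I would start from $\pem = G_\mu * F_\eps[\phi_\eps]$ and apply $\D_\mu$: since $\D_\mu G_\mu = \dot G_\mu$, one gets $\D_\mu \pem = \dot G_\mu * F_\eps[\phi_\eps]$, and then substitute the scale decomposition \eqref{scale_decomposition}, $F_\eps[\phi_\eps] = \Fem[\pem] + \Rem$, to obtain $\D_\mu \pem = \dot G_\mu * (\Fem[\pem] + \Rem)$. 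Using the endpoint $G_1 = 0$, hence $\varphi_{\eps,1} = 0$, integrating from $\mu$ to $1$ yields exactly the first line of the system. This direction is essentially a one-line computation once the endpoint condition is identified.

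For the $R$-equation I would again differentiate $\Rem = F_\eps[\phi_\eps] - \Fem[\pem]$ in $\mu$. The first term is $\mu$-independent, so $\D_\mu \Rem = -\D_\mu\big(\Fem[\pem]\big)$. Here one must be careful to expand the total $\mu$-derivative of $\Fem[\pem]$ via the chain rule: it splits into the explicit $\mu$-dependence $(\D_\nu \Fem)[\pem]$ and the implicit one $D\Fem[\pem]\cdot \D_\mu\pem$. Plugging in $\D_\mu\pem = \dot G_\mu*(\Fem[\pem]+\Rem)$ from the previous step, and abbreviating $DF_{\eps,\nu}\cdot \dot G_\nu$ for the Fréchet-derivative operator acting by convolution with $\dot G_\nu$, gives
$$
\D_\mu \Rem = -\big(\D_\nu + DF_{\eps,\nu}\cdot \dot G_\nu\big)\Fem[\pem] - DF_{\eps,\nu}\cdot\dot G_\nu * \Rem.
$$
Then I would use the normalisation $F_{\eps,0} = F_\eps$, which together with $G_0 = G$ (so $\varphi_{\eps,0} = G*F_\eps[\phi_\eps]$... rather, one checks $R_{\eps,0} = F_\eps[\phi_\eps] - F_{\eps,0}[\varphi_{\eps,0}]$; with the convention that at $\mu=0$ the decomposition is trivial this gives $R_{\eps,0}=0$) to integrate from $0$ to $\mu$ and recover the second line of the system. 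The converse direction runs the same computations in reverse: from the system one differentiates the integral expressions, checks the two $\mu$-derivatives match those of $\pem$ and $\Rem$ as defined, and checks that the boundary values at $\mu=1$ (resp. $\mu=0$) agree, so that by uniqueness of solutions to the ODE the defined quantities coincide with the ones produced by the system.

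The routine part is the differentiation and the fundamental-theorem-of-calculus bookkeeping; the one genuinely delicate point, and the place I would be most careful, is the treatment of the endpoint conditions and the precise meaning of the ``trivial decomposition'' at $\mu = 0$ — i.e. pinning down $R_{\eps,0} = 0$ from $F_{\eps,0} = F_\eps$ and $G_0 = G$, and symmetrically $\varphi_{\eps,1} = 0$ from $G_1 = 0$ — since these are exactly the data that make the integration from the two opposite endpoints consistent. A secondary subtlety is making sure the chain rule for $\D_\mu \Fem[\pem]$ is applied with the correct split between the explicit scale-parameter dependence of $\Fem$ and the dependence through $\pem$; conflating these is the obvious pitfall. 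Everything else (well-definedness of the convolutions for fixed $\eps>0$, smoothness in $\mu$ coming from the smoothness of $\chi$) is standard and, as the authors note, deliberately not pushed to the $\eps\to 0$ limit, so no analytic heavy lifting is needed.
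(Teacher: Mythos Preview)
Your proposal is correct and follows essentially the same approach as the paper: differentiate the defining relations in $\mu$, apply the chain rule to split the total $\mu$-derivative of $\Fem[\pem]$, substitute, and integrate using the endpoint conditions $\varphi_{\eps,1}=0$ (from $G_1=0$) and $R_{\eps,0}=0$ (from $F_{\eps,0}=F_\eps$ and $\varphi_{\eps,0}=\phi_\eps$). If anything you are slightly more careful than the paper, which only spells out the direction from the definitions to the system and leaves the boundary conditions implicit; your discussion of both endpoints and the converse direction is a welcome addition.
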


\begin{proof}
Indeed, one has
\begin{equs}
	\pem &=G_\mu*(F_{\eps}[\varphi_{\eps}]) =  (G_1 - G_\mu)*(F_{\eps}[\varphi_{\eps}])
	\\ &  = - \int_\mu^1\dot{G}_\nu*(F_{\eps}[\varphi_{\eps}])d\nu
	 = - \int_\mu^1\dot{G}_\nu*(F_{\eps,\nu}[\phi_{\eps,\nu}]+R_{\eps,\nu})d\nu,
\end{equs}
where we have used \eqref{scale_decomposition} for the last line. The second equation is obtained by deriving in $\mu$ the equation \eqref{scale_decomposition}, noting that $\D_\mu F_\eps[\phi_\eps]=0$,
\begin{equs}
\partial_{\mu}\Rem & = \partial_{\mu} \left(  	 F_\eps[\phi_\eps] - \Fem[\pem] \right)
\\ & =  - \partial_{\mu} \Fem[\pem] -  D \Fem[\pem]  \cdot \dot{\varphi}_{\eps,\mu}
\\ & =  - \partial_{\mu} \Fem[\pem] -  D \Fem[\pem]  \cdot \dot{G_\mu}*F_\eps[\phi_\eps]
\\ &  = - \partial_{\mu} \Fem[\pem] -  D \Fem[\pem]  \cdot \dot{G_\mu}*\Fem  -  D \Fem[\pem]  \cdot \dot{G_\mu}*\Rem
\end{equs}
where we have used \eqref{grained_process} for the third line and \eqref{scale_decomposition} for the fourth line.

\end{proof}
Note that, in particular, making the choice $\Rem=0$ leads to the flow, or Polchinski, equation
\begin{equation}\label{Poleq}
\D_\mu\Fem[\pem]+D\Fem[\pem]\cdot\dot G_\mu*\Fem[\pem]=0.
\end{equation}
In this work, we make an ansatz of the form 
\begin{equation}\label{ansatz}
\Fem^\gamma[\phi]=\sum_{\substack{\tau\in T_0^*\\ \deg(\tau)\leq\gamma}}\frac{1}{S(\tau)}(\eval^R\tau)[\phi],
\end{equation}
where $T_0^*$ is a combinatorial set of trees (see Section \ref{decorated _trees} for a precise definition of $ T_0^*$), the $S(\tau)$'s the symmetric factors associated to these trees, and $\eval^R$ a map of evaluation of these trees, defined in Section \ref{models}, taking into account the renormalisation of these terms. The parameter $\gamma>0$ is a fixed real number, that depends only on the regularity of the noise and of the non-linearity. We will not make precise statements for its choice in this work. The only fact to keep in mind is that the truncation parameter $\gamma$ keeps the terms that actually need to be renormalised. The attentive reader will notice that we therefore do not choose to have $\Rem=0$. We provide more information about the truncation in Section \eqref{coherence}. In the previous works using the flow approach \cite{Duc22, Duc23, CF24a, CF24b}, an ansatz is made and when plugged into the flow equation, gives a recursive definition of the coefficients appearing in the ansatz, that is also hierarchical in the order of the coefficients,  allowing to define them recursively. Let us already mention that our approach differs here. We will define these coefficients explicitly in order to highlight how the renormalisation is compatible with the flow equation, and compare it with the renormalisation scheme in regularity structures.

\begin{remark}
We have imposed that the equation is subcritical. Therefore the sum in the Ansatz \eqref{ansatz} contains a finite number of terms after truncation.
\end{remark}

\begin{remark}
We will not explain in much detail how the machinery of the flow approach works on the analytical side to get a meaning of solution but we can still exhibit the main steps. In order to construct a solution, Duch first writes a modified fixed point problem for any $\mu\in(0,1]$ using another kernel $K_\mu$ that we will not make explicit here, but which, as a kernel operator, regularizes by suppressing the spatial irregularities at scales below $\mu$. The key to show the contraction is to verify that the renormalised stochastic terms $\eval^R$ satisfy a proper stochastic estimate when tested against the kernels $K_\mu$, and mesured with a suitable norm. This is similar to the analytic conditions of a model in regularity structures. We will not make it explicit there as the construction we chose is rather adapted to the algebraic framework. Instead, we refer to \cite{CF24a}, Section 2.3. In a few words, one replaces the elementary differentials in Definition \ref{eval} by convolutions with the kernels $K_\mu$, and takes a $L^p$-type norm of the latter quantity. This precise choice of norm allowed Duch to prove these estimates inductively via the flow equation \eqref{Poleq}, which makes the whole interest of the flow approach. We refer the reader to Proposition \ref{flowcoeff} to see how this equation reads on the coefficients with our notations. The stochastic estimate is obtained thanks to a similar expression on the cumulants of the coefficients. More precisely, one defines the joint cumulant of the force terms by
\begin{equation}
\mcK_{\mu,\eps}(\tau_1,\dots,\tau_n)[\phi]=\E_c\left[(\eval^R\tau_1)[\phi],\dots,(\eval^R\tau_n)[\phi]\right],
\end{equation}
where $\tau_1,\dots,\tau_n$ are trees in $T_0^*$, and $\E_c$ is the joint cumulant of random variables. These cumulants satisfy themselves an enhanced version of the flow equation on coefficients of Proposition $\ref{flowcoeff}$. The terms $\D_\mu\mcK_{\mu,\eps}(\tau_1,\dots,\tau_n)[\phi]$ can be expressed as a linear combination of a linear operator of $\mcK_{\mu,\eps}(\tau_1,\dots,\tau_n)[\phi]$, as well as a bilinear operator of $\mcK_{\mu,\eps}(\tau_I)[\phi]$ and $\mcK_{\mu,\eps}(\tau_J)[\phi]$ with $I\sqcup J=\{1,\dots,n\}$. Moreover, when integrating this quantity in $\mu$, the initial condition  $\mcK_{0,\eps}(\tau_1,\dots,\tau_n)[\phi]$ brings in inductively the renormalisation. Note that the choice of norm is made so that only the cumulants of one tree have a non-vanishing initial condition. See equation (8.6) in \cite{Duc21} for a precise statement. This induction given by the flow allows to prove a bound on these cumulants (see for instance Lemma 4.15 in \cite{CF24a}) which, thanks to a Kolmogorov-type argument, gives in return the desired almost sure estimate on the force coefficients (see \cite{CF24a} Section 4.3). The parameters $\mu$ and $\eps$ are then successively taken to $0$ and $\pem$ converges in a Hölder-Besov space of negative regularity.

\end{remark}

\section{Algebraic tools on decorated trees}\label{trees}

\subsection{Decorated trees}
\label{decorated _trees}
We want to define a framework of decorated trees to write the involved distributions and formalize the algebraic operations on them. We define a set of decorations for the nodes $\mcD_\Labn=\N^{d+1}\times\{0,1\}$ and a set of decorations for the edges $\mcD_\Labe\coloneqq\{\mcI,\mcI',\CIg,\Xi,\Xig\}\times\N^{d+1}$. We will generally denote an element of $\mcD_\Labn$ as $(X^k,\mfv)$. We will call the symbols denoted with a calligraphic $\mcI$ of integration type and the symbols denoted with a $\Xi$ of noise type. We will denote $\mcI_a$, $\mcI'_a$ and $\CIg_a$ the symbols for edges with decoration $a\in\N^{d+1}$. The first node decoration in $\N^{d+1}$ has to be really interpreted as a polynomial. These will appear when performing Taylor expansions in the $d+1$ space-time variables on the analytical side. We refer the reader to \eqref{Mloc} for the algebraic counterpart.

We interpret these decorated trees in two different ways. First, they will encode iterated stochastic integrals. In this context, the abstract integration maps $\mcI_a$ and $\mcI_a'$ for $a \in \N^{d+1}$ will represent a convolution with respect to $\partial^a (G- G_\mu)$ or its derivative in $\mu$. Following this interpretation, we will overload the notation of type $\mcI_a$ to also denote the operator that grafts a tree onto a new root with edge of decoration $a$. The symbol $\Xi$ will represent the driving noise.
 
These trees will also be used as parameters in the elementary differentials, or the coefficients of the ansatz. The integration-type edge will represent, at the analytical level, derivatives in variables representing the solution and its derivatives. The grey symbols mean that these edges are used only for the elementary differentials but not for the iterated integrals. This is needed as the ansatz for the Polchinski flow is not local, on the contrary of what happens in regularity structures. These grey symbols could be connected to the extended decorations defined in \cite{BHZ}. We also mention that one of the central places where these grey decorations appear is with the coproduct introduced in Definition \ref{deltar}, where it is used as a memory of the edges that were grafted on a given extracted subtree.
  
 The  binary parameter $\mfv$ will be used as a marker at the level of the stochastic iterated integrals viewed as operators with one place-holder corresponding to this mark. Composition of operators is viewed at the combinatorial level via a grafting procedure: one connects a decorated tree to this marked decorated tree by putting an edge between its root and the marked node. In simpler words, the decoration $\mfv$ authorizes grafting on a given node if it is set to $1$, and forbids it if it is set to $0$. See the end of Section \ref{seccoeff} for further explanation. Let us give a precise definition of the involved combinatorial objects.

\begin{definition}[Decorated tree]
	A decorated tree over $(\mcD_\Labn,\mcD_\Labe)$ is a $3$-tuple of the form  $\tau_{\Labe}^{\Labn} =  (\tau,\Labn,\Labe)$ where $\tau$ is a non-planar rooted tree with node set $N_\tau$ and edge set $E_\tau$. The maps $\Labn : N_{\tau} \rightarrow \mcD_\Labn$ and $\Labe : E_\tau{\tiny } \rightarrow \mcD_\Labe$ are node and edge decorations, respectively. 
\end{definition}
We define a binary tree product by 
\begin{equation}  \label{treeproduct}
 	(\tau,\Labn,\Labe) \cdot  (\tau',\Labn',\Labe') 
 	= (\tau \cdot \tau',\Labn + \Labn', \Labe + \Labe')\;, 
\end{equation} 
where $\tau \cdot \tau'$ is the rooted tree obtained by identifying the roots of $ \tau$ and $\tau'$. The sums $ \Labn + \Labn'$ mean that decorations are added at the root and extended to the disjoint union by setting them to vanish on the other tree. Each edge and vertex of both trees keeps its decoration, except the roots which merge into a new root decorated by the sum of the previous two decorations. This operation is simply a way to formalize the natural operations used when trees represent distributions. \\

We already set $\mcI(Y^\mfv)=\mcI'(Y^\mfv)=0$, as well as $\mcI(\tau)=\mcI'(\tau)=0$ for a tree composed only of grey symbols or if there is at least one symbol $\Xig$.\\

\textit{Sets of trees}. We use $T$ to denote the set of decorated trees and $ \mathcal{T} $ is its linear span. We further impose on trees of $T$ that on any node, there cannot be more than one edge of noise type. We impose that the grey integration symbols $\CIg$ can only be final edges with potentially a decoration $X^k$ at the top but having $\mfv$ set to $0$. We also denote:
\begin{itemize}
\item  $T_0$ the set of trees containing only the symbols $\mcI$ and $\Xi$ and with no polynomial decoration on the nodes, \textit{i.e.} set to $X^0=\one$.
\item $T_0^*$, but we set the decoration $\mfv$ to be $1$ uniformly, which means that one can graft everywhere on a tree in $T_0^*$
\item $T_0^{\mfv=0}$ the same set as $T_0$ but with $\mfv=0$ everywhere.
\item $T_1$ the same way as $T_0$, but allowing  at most one edge with the symbol $\mcI'$.
\item  $T_0^X$ as $T_0$, but allowing polynomial decorations.
\item  $T_0^{X,*}$ as $T_0^*$, but allowing polynomial decorations.
\item  $T_1^X$ as $T_1$, but allowing polynomial decorations.
\item $T_0^\mfg$ as $T_0$, but allowing the decorations $\Xig$ and $\CIg$.
\item $T_0^{X,\mfg}$ as $T_0$, but allowing both polynomial and grey decorations. 
\item $T_0^0$ as $T_0$, but allowing only the decoration of the edges to be set to $0$.
\item $T_0^{X,-,*}$ the trees of $T_0^{X,*}$ of negative degree (see the definition of the degree below).
\end{itemize}   We further denote with a calligraphic $\mcT$ the span of any of the sets above, with the right set of subscript and superscripts. We also denote $\zeta_0=\one$ and $\zeta_1=\Xi$ and use generally the notation $\zeta_l$ with $l\in\{0,1\}$. One can then notice that a tree $\tau\in T_0^*$ can be written as
\begin{equation}
\tau=\zeta_l\prod_{i=1}^n\mcI_{a_i}(\tau_i).
\end{equation}
where the product $\prod_i$ has to be interpreted as a tree product.
We will write more general trees in $T$ as
\begin{equation}
\tau=\zeta_lX^kY^\mfv\prod_{i=1}^n\mcI_{a_i}(\tau_i)\prod_{j=1}^m\mcI'_{b_j}(\tau'_j)\prod_{q=1}^p\CIg_{c_q}(X^{k_q}).
\end{equation}
Note that we have introduced the notation $Y^\mfv$ that means that the second decoration at the root is $\mfv\in\{0,1\}$.

Let us mention that the decorated trees used in regularity structures \cite{reg,BHZ} do not use the grey objects for their symbolic notation and one does not have a derivative in $\mu$ which means no symbol $\CI'_a$. They are of the form
\begin{equation*}
	\tau=\zeta_lX^k\prod_{i=1}^n\mcI_{a_i}(\tau_i).
\end{equation*}

\begin{example}
Here is an example of a decorated tree in $T$.
\begin{equation*}
	 X^{\beta}Y^1 \Xi \mathcal{I}_{b}(X^{\gamma}Y^0{\CIg}_c(\one)\Xi) 
	 \mathcal{I}'_{a}(X^{\alpha}Y^0\Xi)
	 = 	\begin{tikzpicture}[scale=0.2,baseline=0.1cm]
			\node at (0,0)  [dot,fill=blue,label= {[label distance=-0.2em]below: \scriptsize  $  \beta    $} ] (root) {};
			\node at (0,5)  [dot,label= {[label distance=-0.2em]right: \scriptsize  $  \gamma   $} ] (center) {};
			\node at (0,9)  [dot,label= {[label distance=-0.2em]above: \scriptsize  $    $} ] (centerc) {};
			\node at (3,4)  [dot,label={[label distance=-0.2em]right: \scriptsize  $ \alpha $}] (right) {};
			\node at (-3,4)  [dot,label={[label distance=-0.2em]above: \scriptsize  $ $} ] (left) {};
			\node at (3,8)  [dot,label={[label distance=-0.2em]above: \scriptsize  $ $} ] (rightc) {};
			\node at (-3,8)  [dot,label={[label distance=-0.2em]above: \scriptsize  $ $} ] (leftc) {};
			\draw[kernel1,color=red] (right) to
			node [sloped,below] {\small }     (root); 
			\draw[kernel1] (center) to
			node [sloped,below] {\small }     (root);
			\draw[kernel1] (center) to
			node [sloped,below] {\small }     (centerc);
			\draw[kernel1] (right) to
			node [sloped,below] {\small }     (rightc); 
			\draw[kernel1] (left) to
			node [sloped,below] {\small }     (root);
			\draw[kernel1,color=gray] (leftc) to
			node [sloped,below] {\small }     (center);
			\node at (3,6) [fill=white,label={[label distance=0em]center: \scriptsize  $ \Xi $} ] () {};
			\node at (0,7) [fill=white,label={[label distance=0em]center: \scriptsize  $ \Xi $} ] () {};
			\node at (-2,2) [fill=white,label={[label distance=0em]center: \scriptsize  $ \Xi $} ] () {};
			\node at (2,2) [fill=white,label={[label distance=0em]center: \scriptsize   $ a  $} ] () {};
			\node at (0,2.5) [fill=white,label={[label distance=0em]center: \scriptsize  $ b  $} ] () {};
			\node at (-2,6.5) [fill=white,label={[label distance=0em]center: \scriptsize  $ c $} ] () {};
		\end{tikzpicture}
	\end{equation*}
	Let us briefly explain the graphical notations which are used in the figure above. A black node corresponds to $Y^0$ and a blue node to $Y^1$. A black edge decorated by $\Xi$ is an edge decorated by the noise. The black edges decorated by $a \in \N^{d+1}$ (roman letters) are edges with the decoration $ \mathcal{I}_a $. A red edge decorated by $a$ is for an edge decorated  by $\mathcal{I}_a'$. Node decorations in Greek letters correspond to monomials of the type $X^{\alpha}$.
\end{example}

\textit{Degree}. Before defining the degree, we fix a vector $\mfs=(2,1,\dots,1)\in\N^{d+1}$ corresponding to a parabolic-type scaling and we define for $q\in\R^{d+1}$, $|q|_\mfs=\sum_{i=1}^{d+1}\mfs_i q_i$. We then let inductively
\begin{align*}
&\deg(\Xi)=-2+\alpha-\kappa & &\deg(X^k)=|k|_\mfs\\
&\deg(\tau\sigma)=\deg(\tau)+\deg(\sigma)  & &\deg(\mcI_a(\tau))=\deg(\tau)+2-|a|_\mfs\\
&\deg(\mcI'_a(\tau))=\deg(\mcI_a(\tau))-1 & &\deg(\CIg_a(X^k))=0.
\end{align*}
$\kappa$ is a small positive integer. The degree map coincides with the degree map defined in \cite[Sec. 8.1]{reg} for the decorated trees that appear in regularity structures.Consequently, we define $T_0^{X,-,*}$ the subset of $T_0^{X,*}$ of trees of non-positive degree.
In general, the degree of a decorated tree has to be understood as the scaling exponent in $\mu$ of the stochastic terms, appropriately tested against a specific scaling kernel. That is why the grey symbols do not play any role in its computation as they play a role only when the considered tree is used as a parameter of an elementary differential.
The degree associated with $\mcI_a$ corresponds to a gain of regularity, provided by a Schauder estimate with respect to the convolution with the kernel $ \partial^{a}(G-G_\mu) $.
In the flow approach, this will also correspond to the behaviour in the parameter $\mu$ of $\dot G_\mu$, which is the reason why one has to subtract $1$ from the degree of  $ \mathcal{I}_a $  to get the degree of  $ \mathcal{I}'_a $.

Let us introduce some classical algebraic quantities on trees, which will not depend on the decoration $\mfv$, so that we will forget the symbol $Y^\mfv$ in this paragraph for clarity. We define the symmetry factor $S(\tau)$ of a decorated tree $\tau=X^k\zeta_l\mcI'_{a_0}(\tau_0)^{\beta_0}\prod_{i=1}^n\mcI_{a_i}(\tau_i)^{\beta_i}$ in $T_1^X$, with $(a_i,\tau_i)\neq(a_j,\tau_j)$ for $i\neq j$, and $\beta_0\in\{0,1\}$, inductively by 
\begin{equation}
S(\tau)=k!\prod_{i=0}^n S(\tau_i)^{\beta_i}\beta_i!,
\end{equation}
and with $S(\zeta_l)=1$ to initialize. We also define an inner product on $T_1^X$ for two trees $\sigma,\tau$ by
\begin{equation}
\scal{\sigma,\tau}=S(\tau)\one_{\sigma=\tau},
\end{equation}
and extended by bilinearity to $\mcT_1^X$. We also set
\begin{equation}
\scal{\sigma_1\otimes\sigma_2,\tau_1\otimes\tau_2}=\scal{\sigma_1,\tau_1}\scal{\sigma_2,\tau_2}.
\end{equation}
The inner product and the symmetry factor are of the same flavour as in \cite[Sec. 2.7]{BCCH}, with the difference that more decorations are considered.

\par{} \textit{Grafting operations}. We choose to define a grafting operation $\rgraft_a$ recursively. Analytically, this will correspond to a composition of two iterated integrals one being convolved with the kernel $ \partial^a  \dot{G}_{\mu}$ encoded by the decorated edge type $\mathcal{I}_a'$ which is associated with the grafting $\rgraft_a$. Note that we choose to define these operations in an inductive way. This choice is motivated by the fact that the vast majority of our proofs use inductive techniques, thus making these definitions easier to manipulate. We set, on a tree in $T_0^\mfg$ of the form $\zeta_lY^\mfv\prod_{i=1}^n\mcI_{a_i}(\tau_i)\prod_{j=1}^m\CIg_{b_j}(X^{k_j})$, and $\sigma\in T$,
\begin{multline}
\sigma\rgraft_a\left(\zeta_lY^1\prod_{i=1}^n\mcI_{a_i}(\tau_i)\prod_{j=1}^m\CIg_{b_j}(X^{k_j})\right)=\mcI'_a(\sigma)\zeta_lY^0\prod_{i=1}^n\mcI_{a_i}(\tau_i)\prod_{j=1}^m\CIg_{b_j}(X^{k_j})\\+ \zeta_lY^1 \sum_{i=1}^n\mcI_{a_i}(\sigma\rgraft_a\tau_i)\prod_{k\neq i}\mcI_{a_k}(\tau_k)\prod_{j=1}^m\CIg_{b_j}(X^{k_j}),
\end{multline}
and
\begin{multline}
\sigma  \rgraft_a\left(\zeta_lY^0\prod_{i=1}^n\mcI_{a_i}(\tau_i)\prod_{j=1}^m\CIg_{b_j}(X^{k_j})\right) \\  = \zeta_lY^0\sum_{i=1}^n\mcI_{a_i}(\sigma\rgraft_a\tau_i)\prod_{k\neq i}\mcI_{a_k}(\tau_k)\prod_{j=1}^m\CIg_{b_j}(X^{k_j}).
\end{multline}
This latter definition makes the use of the parameter $\mfv\in\{0,1\}$ as an indicator of the nodes we can graft on clear. The rest is given to the recursion. It will later be used to write the flow equation on coefficients in an abstract way.  In the case where the parameter $\mfv$ is zero at the root, the graft does not happen there, the rest of the definition is given to the induction on the superior nodes. Note that the graft does not touch the grey edges, since they are only artefacts allowing to keep tract of additional derivatives in the elementary differentials. In many proofs of Section \ref{mainresults}, $\mfv$ will be set to $1$ on every node, and thus will not play any role. We subsequently introduce the operation $\rgraft_a^{\text{\tiny{root}}}$ where the grafting only occurs at the root of the tree, if it is of course allowed. We also set $\rgraft_a^{\text{\tiny{non-root}}}=\rgraft_a-\rgraft_a^{\text{\tiny{root}}}$. We consequently define the following insertion product $\graft:\mcI'(T_0)\times T_0\to T_1$ by
\begin{equation}\label{insert}
\mcI'_a(\sigma)\graft\tau=\sigma\rgraft_a\tau,
\end{equation}
and vanishing on other sets.
\begin{example}
Let us give a simple example of the operation we just introduced.
\begin{equation*}
\begin{tikzpicture}[scale=0.2,baseline=0.1cm]
			\node at (0,0)  [dot,label= {[label distance=-0.2em]below: \scriptsize  $      $} ] (root) {};
			\node at (0,4)  [dot,label= {[label distance=-0.2em]right: \scriptsize  $     $} ] (center) {};
			\node at (0,6.5)  [dot,label= {[label distance=-0.2em]right: \scriptsize  $     $} ] (centerc) {};
			\draw[kernel1,color=red] (center) to
			node [sloped,below] {\small }     (root);
			\draw[kernel1] (centerc) to
			node [sloped,below] {\small }     (center);
			\node at (0,2) [fill=white,label={[label distance=0em]center: \scriptsize  $ a  $} ] () {};
			\node at (0,5.25) [fill=white,label={[label distance=0em]center: \scriptsize  $ \Xi  $} ] () {};
		\end{tikzpicture}\graft\begin{tikzpicture}[scale=0.2,baseline=0.1cm]
			\node at (0,0)  [dot,fill=blue,label= {[label distance=-0.2em]below: \scriptsize  $      $} ] (root) {};
			\node at (0,5)  [dot,fill=blue,label= {[label distance=-0.2em]right: \scriptsize  $     $} ] (center) {};
			\node at (0,9)  [dot,label= {[label distance=-0.2em]above: \scriptsize  $    $} ] (centerc) {};
			\node at (3,3)  [dot,label={[label distance=-0.2em]right: \scriptsize  $  $}] (right) {};
			\node at (4.5,4.5)  [dot,label={[label distance=-0.2em]above: \scriptsize  $ $} ] (rightc) {};
			\node at (0,11.5)  [dot,label={[label distance=-0.2em]above: \scriptsize  $ $} ] (top) {};
			\node at (3,10.5)  [dot,label={[label distance=-0.2em]above: \scriptsize  $ $} ] (top2) {};
			\node at (3,8)  [dot,label={[label distance=-0.2em]above: \scriptsize  $ $} ] (mid1) {};
			\draw[kernel1] (right) to
			node [sloped,below] {\small }     (root); 
			\draw[kernel1] (center) to
			node [sloped,below] {\small }     (root);
			\draw[kernel1] (center) to
			node [sloped,below] {\small }     (centerc);
			\draw[kernel1] (right) to
			node [sloped,below] {\small }     (rightc); 
			\draw[kernel1] (top2) to
			node [sloped,below] {\small }     (mid1);
			\draw[kernel1] (top) to
			node [sloped,below] {\small }     (centerc);
			\draw[kernel1] (mid1) to
			node [sloped,below] {\small }     (center);
			\node at (3.5,4) [fill=white,label={[label distance=0em]center: \scriptsize  $ \Xi $} ] () {};
			\node at (0,7) [fill=white,label={[label distance=0em]center: \scriptsize  $ a $} ] () {};
			\node at (1.5,1.5) [fill=white,label={[label distance=0em]center: \scriptsize   $ a  $} ] () {};
			\node at (0,2.5) [fill=white,label={[label distance=0em]center: \scriptsize  $ a  $} ] () {};
			\node at (2,6.75) [fill=white,label={[label distance=0em]center: \scriptsize  $ a $} ] () {};
			\node at (0,10.25) [fill=white,label={[label distance=0em]center: \scriptsize  $ \Xi $} ] () {};
			\node at (3,9.25) [fill=white,label={[label distance=0em]center: \scriptsize  $ \Xi $} ] () {};
		\end{tikzpicture}=\begin{tikzpicture}[scale=0.2,baseline=0.1cm]
			\node at (0,0)  [dot,fill=blue,label= {[label distance=-0.2em]below: \scriptsize  $      $} ] (root) {};
			\node at (0,5)  [dot,label= {[label distance=-0.2em]right: \scriptsize  $     $} ] (center) {};
			\node at (0,9)  [dot,label= {[label distance=-0.2em]above: \scriptsize  $    $} ] (centerc) {};
			\node at (3,3)  [dot,label={[label distance=-0.2em]right: \scriptsize  $  $}] (right) {};
			\node at (4.5,4.5)  [dot,label={[label distance=-0.2em]above: \scriptsize  $ $} ] (rightc) {};
			\node at (0,11.5)  [dot,label={[label distance=-0.2em]above: \scriptsize  $ $} ] (top) {};
			\node at (-3,10.5)  [dot,label={[label distance=-0.2em]above: \scriptsize  $ $} ] (top1) {};
			\node at (3,10.5)  [dot,label={[label distance=-0.2em]above: \scriptsize  $ $} ] (top2) {};
			\node at (3,8)  [dot,label={[label distance=-0.2em]above: \scriptsize  $ $} ] (mid1) {};
			\node at (-3,8)  [dot,label={[label distance=-0.2em]above: \scriptsize  $ $} ] (mid2) {};
			\draw[kernel1] (right) to
			node [sloped,below] {\small }     (root); 
			\draw[kernel1] (center) to
			node [sloped,below] {\small }     (root);
			\draw[kernel1] (center) to
			node [sloped,below] {\small }     (centerc);
			\draw[kernel1] (right) to
			node [sloped,below] {\small }     (rightc); 
			\draw[kernel1] (top1) to
			node [sloped,below] {\small }     (mid2);
			\draw[kernel1] (top2) to
			node [sloped,below] {\small }     (mid1);
			\draw[kernel1] (top) to
			node [sloped,below] {\small }     (centerc);
			\draw[kernel1] (mid1) to
			node [sloped,below] {\small }     (center);
			\draw[kernel1,color=red] (mid2) to
			node [sloped,below] {\small }     (center);
			\node at (3.5,4) [fill=white,label={[label distance=0em]center: \scriptsize  $ \Xi $} ] () {};
			\node at (0,7) [fill=white,label={[label distance=0em]center: \scriptsize  $ a $} ] () {};
			\node at (1.5,1.5) [fill=white,label={[label distance=0em]center: \scriptsize   $ a  $} ] () {};
			\node at (0,2.5) [fill=white,label={[label distance=0em]center: \scriptsize  $ a  $} ] () {};
			\node at (1.5,1.5) [fill=white,label={[label distance=0em]center: \scriptsize   $ a  $} ] () {};
			\node at (-2,6.75) [fill=white,label={[label distance=0em]center: \scriptsize  $ a $} ] () {};
			\node at (2,6.75) [fill=white,label={[label distance=0em]center: \scriptsize  $ a $} ] () {};
			\node at (0,10.25) [fill=white,label={[label distance=0em]center: \scriptsize  $ \Xi $} ] () {};
			\node at (-3,9.25) [fill=white,label={[label distance=0em]center: \scriptsize  $ \Xi $} ] () {};
			\node at (3,9.25) [fill=white,label={[label distance=0em]center: \scriptsize  $ \Xi $} ] () {};
		\end{tikzpicture}+\begin{tikzpicture}[scale=0.2,baseline=0.1cm]
			\node at (0,0)  [dot,label= {[label distance=-0.2em]below: \scriptsize  $      $} ] (root) {};
			\node at (0,5)  [dot,fill=blue,label= {[label distance=-0.2em]right: \scriptsize  $     $} ] (center) {};
			\node at (0,9)  [dot,label= {[label distance=-0.2em]above: \scriptsize  $    $} ] (centerc) {};
			\node at (3,3)  [dot,label={[label distance=-0.2em]right: \scriptsize  $  $}] (right) {};
			\node at (-3,3)  [dot,label={[label distance=-0.2em]above: \scriptsize  $ $} ] (left) {};
			\node at (4.5,4.5)  [dot,label={[label distance=-0.2em]above: \scriptsize  $ $} ] (rightc) {};
			\node at (-3,5.5)  [dot,label={[label distance=-0.2em]above: \scriptsize  $ $} ] (leftc) {};
			\node at (0,11.5)  [dot,label={[label distance=-0.2em]above: \scriptsize  $ $} ] (top) {};
			\node at (3,10.5)  [dot,label={[label distance=-0.2em]above: \scriptsize  $ $} ] (top2) {};
			\node at (3,8)  [dot,label={[label distance=-0.2em]above: \scriptsize  $ $} ] (mid1) {};
			\draw[kernel1] (right) to
			node [sloped,below] {\small }     (root); 
			\draw[kernel1] (center) to
			node [sloped,below] {\small }     (root);
			\draw[kernel1] (center) to
			node [sloped,below] {\small }     (centerc);
			\draw[kernel1] (right) to
			node [sloped,below] {\small }     (rightc); 
			\draw[kernel1,color=red] (left) to
			node [sloped,below] {\small }     (root);
			\draw[kernel1] (leftc) to
			node [sloped,below] {\small }     (left);
			\draw[kernel1] (top2) to
			node [sloped,below] {\small }     (mid1);
			\draw[kernel1] (top) to
			node [sloped,below] {\small }     (centerc);
			\draw[kernel1] (mid1) to
			node [sloped,below] {\small }     (center);
			\node at (3.5,4) [fill=white,label={[label distance=0em]center: \scriptsize  $ \Xi $} ] () {};
			\node at (0,7) [fill=white,label={[label distance=0em]center: \scriptsize  $ a $} ] () {};
			\node at (-1.5,1.5) [fill=white,label={[label distance=0em]center: \scriptsize  $ a $} ] () {};
			\node at (1.5,1.5) [fill=white,label={[label distance=0em]center: \scriptsize   $ a  $} ] () {};
			\node at (0,2.5) [fill=white,label={[label distance=0em]center: \scriptsize  $ a  $} ] () {};
			\node at (-3,4.25) [fill=white,label={[label distance=0em]center: \scriptsize  $ \Xi $} ] () {};
			\node at (2,6.75) [fill=white,label={[label distance=0em]center: \scriptsize  $ a $} ] () {};
			\node at (0,10.25) [fill=white,label={[label distance=0em]center: \scriptsize  $ \Xi $} ] () {};
			\node at (3,9.25) [fill=white,label={[label distance=0em]center: \scriptsize  $ \Xi $} ] () {};
		\end{tikzpicture}
\end{equation*}
We recall that the blue nodes are the ones for which the parameter $\mfv$ is set to one, thus meaning that we can only graft on these. In this example, it is forbidden to graft on the black nodes, whereas it would be the case with the classical definition found in the literature. Note that it is also always forbidden to graft on the noise-type edges.
\end{example}

Before continuing, we need to introduce two grafting-type operations $\ggraft_a$ and $\ggraft_{a,k}$ on $T_0^X$ and $T_0$ respectively, defined inductively in the following way, and extended by linearity to $\mcT_0^X$ and $\mcT_0$.
\begin{multline}
\ggraft_{a,k}\left(\zeta_lY^\mfv\prod_{i=1}^n\mcI_{a_i}(\tau_i)\right)=\mfv\CIg_{a}(X^k)\zeta_lY^\mfv\prod_{i=1}^n\mcI_{a_i}(\tau_i)\\ +\sum_{i=1}^n\zeta_lY^\mfv\mcI_{a_i}(\ggraft_{a,k}\tau_i)\prod_{j\neq i}\mcI_{a_j}(\tau_j),
\end{multline}
and
\begin{multline}
\ggraft_{a}\left(\zeta_lX^kY^\mfv\prod_{i=1}^n\mcI_{a_i}(\tau_i)\right)=\left(\mfv\sum_{j\in\N^{d+1}}\binom{k}{j}X^{k-j}\CIg_{a-j}(X^j)\right)\times\\ \zeta_lY^\mfv\prod_{i=1}^n\mcI_{a_i}(\tau_i)
+\sum_{i=1}^n\zeta_lX^kY^\mfv\mcI_{a_i}(\ggraft_{a}\tau_i)\prod_{j\neq i}\mcI_{a_j}(\tau_j).
\end{multline}
Note that $\ggraft_a$ and $\ggraft_{a,k}$ are \textit{not} binary operations. As the parameter $\mfv$ is in $\{0,1\}$, placing it at the front of the expressions allows us to use it as an indicator function, so that, when $\mfv=0$, the graft at the root does not occur. $\ggraft_{a,k}$ will be used in interaction with the coproduct $\Delta_r$ defined in Definition \eqref{deltar}. The second operation $\ggraft_a$ will be used in interaction with the elementary differentials in Lemma \eqref{lemmaupsilon} and in the proof of Proposition \ref{graft} as the counterpart of an analytical operation, namely a Fréchet-type derivative. Note its similarity with the deformed grafts intensively used in \cite{BB21b} in regularity structures, thus making it a natural object.

\begin{example}
We give two concrete examples of how these two operations act.
\begin{equs}
\ggraft_{a,k}\begin{tikzpicture}[scale=0.2,baseline=0.1cm]
			\node at (0,0)  [dot,color=blue,label= {[label distance=-0.2em]below: \scriptsize  $      $} ] (root) {};
			\node at (2,3)  [dot,color=blue,label={[label distance=-0.2em]right: \scriptsize  $  $}] (right) {};
			\node at (-2,3)  [dot,label={[label distance=-0.2em]above: \scriptsize  $ $} ] (left) {};
			\node at (2,5.5)  [dot,label={[label distance=-0.2em]above: \scriptsize  $ $} ] (rightc) {};
			\node at (-2,5.5)  [dot,label={[label distance=-0.2em]above: \scriptsize  $ $} ] (leftc) {};
			\draw[kernel1] (right) to
			node [sloped,below] {\small }     (root); 
			\draw[kernel1] (right) to
			node [sloped,below] {\small }     (rightc); 
			\draw[kernel1] (left) to
			node [sloped,below] {\small }     (root);
			\draw[kernel1] (leftc) to
			node [sloped,below] {\small }     (left);
			\node at (2,4.25) [fill=white,label={[label distance=0em]center: \scriptsize  $ \Xi $} ] () {};
			\node at (-1.25,1.5) [fill=white,label={[label distance=0em]center: \scriptsize  $ b $} ] () {};
			\node at (1.25,1.5) [fill=white,label={[label distance=0em]center: \scriptsize   $ c  $} ] () {};
			\node at (-2,4.25) [fill=white,label={[label distance=0em]center: \scriptsize  $ \Xi $} ] () {};
		\end{tikzpicture}=\begin{tikzpicture}[scale=0.2,baseline=0.1cm]
			\node at (0,0)  [dot,color=blue,label= {[label distance=-0.2em]below: \scriptsize  $      $} ] (root) {};
			\node at (2.5,3)  [dot,color=blue,label={[label distance=-0.2em]right: \scriptsize  $  $}] (right) {};
			\node at (-2.5,3)  [dot,label={[label distance=-0.2em]above: \scriptsize  $ $} ] (left) {};
			\node at (0,3.5) [dot,label={[label distance=-0.2em]above: \scriptsize  $ $} ] (middle) {};
			\node at (2.5,5.5)  [dot,label={[label distance=-0.2em]above: \scriptsize  $ $} ] (rightc) {};
			\node at (-2.5,5.5)  [dot,label={[label distance=-0.2em]above: \scriptsize  $ $} ] (leftc) {};
			\draw[kernel1] (right) to
			node [sloped,below] {\small }     (root); 
			\draw[kernel1] (right) to
			node [sloped,below] {\small }     (rightc); 
			\draw[kernel1] (left) to
			node [sloped,below] {\small }     (root);
			\draw[kernel1] (leftc) to
			node [sloped,below] {\small }     (left);
			\draw[kernel1, color=gray] (root) to
			node [sloped,below] {\small }     (middle);
			\node at (2.5,4.25) [fill=white,label={[label distance=0em]center: \scriptsize  $ \Xi $} ] () {};
			\node at (-1.5,1.5) [fill=white,label={[label distance=0em]center: \scriptsize  $ b $} ] () {};
			\node at (1.5,1.5) [fill=white,label={[label distance=0em]center: \scriptsize   $ c  $} ] () {};
			\node at (0,1.75) [fill=white,label={[label distance=0em]center: \scriptsize   $ a  $} ] () {};
			\node at (-2.5,4.25) [fill=white,label={[label distance=0em]center: \scriptsize  $ \Xi $} ] () {};
			\node at (1,3.5) [fill=white,label={[label distance=0em]center: \scriptsize   $ k  $} ] () {};
		\end{tikzpicture}+\begin{tikzpicture}[scale=0.2,baseline=0.1cm]
			\node at (0,0)  [dot,color=blue,label= {[label distance=-0.2em]below: \scriptsize  $      $} ] (root) {};
			\node at (2,3)  [dot,color=blue,label={[label distance=-0.2em]right: \scriptsize  $  $}] (right) {};
			\node at (-2,3)  [dot,label={[label distance=-0.2em]above: \scriptsize  $ $} ] (left) {};
			\node at (0,6) [dot,label={[label distance=-0.2em]above: \scriptsize  $ $} ] (middle) {};
			\node at (3.5,5.5)  [dot,label={[label distance=-0.2em]above: \scriptsize  $ $} ] (rightc) {};
			\node at (-2,5.5)  [dot,label={[label distance=-0.2em]above: \scriptsize  $ $} ] (leftc) {};
			\draw[kernel1] (right) to
			node [sloped,below] {\small }     (root); 
			\draw[kernel1] (right) to
			node [sloped,below] {\small }     (rightc); 
			\draw[kernel1] (left) to
			node [sloped,below] {\small }     (root);
			\draw[kernel1] (leftc) to
			node [sloped,below] {\small }     (left);
			\draw[kernel1, color=gray] (right) to
			node [sloped,below] {\small }     (middle);
			\node at (2.75,4.25) [fill=white,label={[label distance=0em]center: \scriptsize  $ \Xi $} ] () {};
			\node at (-1.25,1.5) [fill=white,label={[label distance=0em]center: \scriptsize  $ b $} ] () {};
			\node at (1.25,1.5) [fill=white,label={[label distance=0em]center: \scriptsize   $ c  $} ] () {};
			\node at (-2,4.25) [fill=white,label={[label distance=0em]center: \scriptsize  $ \Xi $} ] () {};
			\node at (1,6.5) [fill=white,label={[label distance=0em]center: \scriptsize   $ k  $} ] () {};
			\node at (1,4.5) [fill=white,label={[label distance=0em]center: \scriptsize   $ a  $} ] () {};
		\end{tikzpicture},
\end{equs}
and
\begin{equs}
\ggraft_{a}\begin{tikzpicture}[scale=0.2,baseline=0.1cm]
			\node at (0,0)  [dot,color=blue,label= {[label distance=-0.2em]below: \scriptsize  $      $} ] (root) {};
			\node at (2,3)  [dot,color=blue,label={[label distance=-0.2em]right: \scriptsize  $  $}] (right) {};
			\node at (-2,3)  [dot,label={[label distance=-0.2em]above: \scriptsize  $ $} ] (left) {};
			\node at (2,5.5)  [dot,label={[label distance=-0.2em]above: \scriptsize  $ $} ] (rightc) {};
			\node at (-2,5.5)  [dot,label={[label distance=-0.2em]above: \scriptsize  $ $} ] (leftc) {};
			\draw[kernel1] (right) to
			node [sloped,below] {\small }     (root); 
			\draw[kernel1] (right) to
			node [sloped,below] {\small }     (rightc); 
			\draw[kernel1] (left) to
			node [sloped,below] {\small }     (root);
			\draw[kernel1] (leftc) to
			node [sloped,below] {\small }     (left);
			\node at (2,4.25) [fill=white,label={[label distance=0em]center: \scriptsize  $ \Xi $} ] () {};
			\node at (-1.25,1.5) [fill=white,label={[label distance=0em]center: \scriptsize  $ b $} ] () {};
			\node at (1.25,1.5) [fill=white,label={[label distance=0em]center: \scriptsize   $ c  $} ] () {};
			\node at (-2,4.25) [fill=white,label={[label distance=0em]center: \scriptsize  $ \Xi $} ] () {};
			\node at (1.5,0) [fill=white,label={[label distance=0em]center: \scriptsize  $ k_1  $} ] () {};
			\node at (3.5,3) [fill=white,label={[label distance=0em]center: \scriptsize  $ k_2  $} ] () {};
		\end{tikzpicture}=\sum_{j_1\in\N^{d+1}}\binom{k_1}{j_1}\begin{tikzpicture}[scale=0.2,baseline=0.1cm]
			\node at (0,0)  [dot,color=blue,label= {[label distance=-0.2em]below: \scriptsize  $      $} ] (root) {};
			\node at (2.5,3)  [dot,label={[label distance=-0.2em]right: \scriptsize  $  $}] (right) {};
			\node at (-2.5,3)  [dot,label={[label distance=-0.2em]above: \scriptsize  $ $} ] (left) {};
			\node at (0,3.5) [dot,color=blue,label={[label distance=-0.2em]above: \scriptsize  $ $} ] (middle) {};
			\node at (0,6)  [dot,label={[label distance=-0.2em]above: \scriptsize  $ $} ] (rightc) {};
			\node at (-2.5,5.5)  [dot,label={[label distance=-0.2em]above: \scriptsize  $ $} ] (leftc) {};
			\draw[kernel1, color=gray] (right) to
			node [sloped,below] {\small }     (root); 
			\draw[kernel1] (middle) to
			node [sloped,below] {\small }     (rightc); 
			\draw[kernel1] (left) to
			node [sloped,below] {\small }     (root);
			\draw[kernel1] (leftc) to
			node [sloped,below] {\small }     (left);
			\draw[kernel1] (root) to
			node [sloped,below] {\small }     (middle);
			\node at (0,4.75) [fill=white,label={[label distance=0em]center: \scriptsize  $ \Xi $} ] () {};
			\node at (-1.5,1.5) [fill=white,label={[label distance=0em]center: \scriptsize  $ b $} ] () {};
			\node at (0,1.75) [fill=white,label={[label distance=0em]center: \scriptsize   $ c  $} ] () {};
			\node at (1.25,1.5) [fill=white,label={[label distance=0em]center: \scriptsize   $   $} ] () {};
			\node at (3,1.5) [fill=white,label={[label distance=0em]center: \scriptsize   $ a-j_1  $} ] () {};
			\node at (-2.5,4.25) [fill=white,label={[label distance=0em]center: \scriptsize  $ \Xi $} ] () {};
			\node at (1.25,3.5) [fill=white,label={[label distance=0em]center: \scriptsize   $ k_2  $} ] () {};
			\node at (3,0) [fill=white,label={[label distance=0em]center: \scriptsize  $ k_1-j_1  $} ] () {};
			\node at (3.75,3) [fill=white,label={[label distance=0em]center: \scriptsize  $ j_1  $} ] () {};
		\end{tikzpicture}+\sum_{j_2\in\N^{d+1}}\binom{k_2}{j_2}\begin{tikzpicture}[scale=0.2,baseline=0.1cm]
			\node at (0,0)  [dot,color=blue,label= {[label distance=-0.2em]below: \scriptsize  $      $} ] (root) {};
			\node at (2,3)  [dot,color=blue,label={[label distance=-0.2em]right: \scriptsize  $  $}] (right) {};
			\node at (-2,3)  [dot,label={[label distance=-0.2em]above: \scriptsize  $ $} ] (left) {};
			\node at (4,6) [dot,label={[label distance=-0.2em]above: \scriptsize  $ $} ] (middle) {};
			\node at (0.5,5.5)  [dot,label={[label distance=-0.2em]above: \scriptsize  $ $} ] (rightc) {};
			\node at (-2,5.5)  [dot,label={[label distance=-0.2em]above: \scriptsize  $ $} ] (leftc) {};
			\draw[kernel1] (right) to
			node [sloped,below] {\small }     (root); 
			\draw[kernel1] (right) to
			node [sloped,below] {\small }     (rightc); 
			\draw[kernel1] (left) to
			node [sloped,below] {\small }     (root);
			\draw[kernel1] (leftc) to
			node [sloped,below] {\small }     (left);
			\draw[kernel1, color=gray] (right) to
			node [sloped,below] {\small }     (middle);
			\node at (1.25,4.25) [fill=white,label={[label distance=0em]center: \scriptsize  $ \Xi $} ] () {};
			\node at (-1.25,1.5) [fill=white,label={[label distance=0em]center: \scriptsize  $ b $} ] () {};
			\node at (1.25,1.5) [fill=white,label={[label distance=0em]center: \scriptsize   $ c  $} ] () {};
			\node at (-2,4.25) [fill=white,label={[label distance=0em]center: \scriptsize  $ \Xi $} ] () {};
			\node at (5.25,6) [fill=white,label={[label distance=0em]center: \scriptsize   $ j_2  $} ] () {};
			\node at (3,4.5) [fill=white,label={[label distance=0em]center: \scriptsize   $   $} ] () {};
			\node at (4.25,4.5) [fill=white,label={[label distance=0em]center: \scriptsize   $ a-j_2  $} ] () {};
			\node at (1.5,0) [fill=white,label={[label distance=0em]center: \scriptsize  $ k_1  $} ] () {};
			\node at (5,3) [fill=white,label={[label distance=0em]center: \scriptsize  $ k_2-j_2  $} ] () {};
		\end{tikzpicture}
\end{equs}
\end{example}

\textit{Localisation map}. Eventually, we consider the following map $ \Mloc :  \mcT_0\to\mcT_0^X $ given recursively  by
\begin{equation}\label{Mloc}
	\Mloc \zeta_lY^\mfv\prod_{i=1}^n\mcI_{a_i}(\tau_i) =  \sum_{k \in \N^{d+1}} \frac{X^k}{k!} \zeta_lY^\mfv\prod_{i=1}^n\mcI_{a_i}( \Mloc \tau_i).
\end{equation}
This map, that will play an important role throughout the paper, can be viewed as an abstract way to perform Taylor expansions on the nodes of an input tree. This latter expansion will be later key in what we call the localisation step of the procedure. This inductive definition applies this idea to the root of a tree and lets the rest of it to the inductive part in such a way that $\Mloc$ is multiplicative. 
$ \Mloc $ produces an infinite sum but, in practice, they are finite as the polynomial decorations cannot be of arbitrarily large degree. One can make also sense of this infinite sum by using a bigrading introduced in \cite[Sec. 2.3]{BHZ} where and the first component is a sum of the size of all node and edge decorations and the second component is the size of a tree counting the number of edges and nodes. 

Let us give an example to make this intuition crystal clear.

\begin{example}\label{exMloc}
\begin{equation*}
\Mloc\begin{tikzpicture}[scale=0.2,baseline=0.1cm]
			\node at (0,0)  [dot,label= {[label distance=-0.2em]below: \scriptsize  $      $} ] (root) {};
			\node at (2,3)  [dot,label={[label distance=-0.2em]right: \scriptsize  $  $}] (right) {};
			\node at (-2,3)  [dot,label={[label distance=-0.2em]above: \scriptsize  $ $} ] (left) {};
			\node at (2,5.5)  [dot,label={[label distance=-0.2em]above: \scriptsize  $ $} ] (rightc) {};
			\node at (-2,5.5)  [dot,label={[label distance=-0.2em]above: \scriptsize  $ $} ] (leftc) {};
			\draw[kernel1] (right) to
			node [sloped,below] {\small }     (root); 
			\draw[kernel1] (right) to
			node [sloped,below] {\small }     (rightc); 
			\draw[kernel1] (left) to
			node [sloped,below] {\small }     (root);
			\draw[kernel1] (leftc) to
			node [sloped,below] {\small }     (left);
			\node at (2,4.25) [fill=white,label={[label distance=0em]center: \scriptsize  $ \Xi $} ] () {};
			\node at (-1.25,1.5) [fill=white,label={[label distance=0em]center: \scriptsize  $ a $} ] () {};
			\node at (1.25,1.5) [fill=white,label={[label distance=0em]center: \scriptsize   $ a  $} ] () {};
			\node at (-2,4.25) [fill=white,label={[label distance=0em]center: \scriptsize  $ \Xi $} ] () {};
		\end{tikzpicture}=\sum_{i,j,k\in\N^{d+1}}\frac{1}{i!j!k!}\begin{tikzpicture}[scale=0.2,baseline=0.1cm]
			\node at (0,0)  [dot,label= {[label distance=-0.2em]below: \scriptsize  $   k   $} ] (root) {};
			\node at (2,3)  [dot,label={[label distance=-0.2em]right: \scriptsize  $j$}] (right) {};
			\node at (-2,3)  [dot,label={[label distance=-0.2em]left: \scriptsize  $i$} ] (left) {};
			\node at (2,5.5)  [dot,label={[label distance=-0.2em]above: \scriptsize  $ $} ] (rightc) {};
			\node at (-2,5.5)  [dot,label={[label distance=-0.2em]above: \scriptsize  $  $} ] (leftc) {};
			\draw[kernel1] (right) to
			node [sloped,below] {\small }     (root); 
			\draw[kernel1] (right) to
			node [sloped,below] {\small }     (rightc); 
			\draw[kernel1] (left) to
			node [sloped,below] {\small }     (root);
			\draw[kernel1] (leftc) to
			node [sloped,below] {\small }     (left);
			\node at (2,4.25) [fill=white,label={[label distance=0em]center: \scriptsize  $ \Xi $} ] () {};
			\node at (-1.25,1.5) [fill=white,label={[label distance=0em]center: \scriptsize  $ a $} ] () {};
			\node at (1.25,1.5) [fill=white,label={[label distance=0em]center: \scriptsize   $ a  $} ] () {};
			\node at (-2,4.25) [fill=white,label={[label distance=0em]center: \scriptsize  $ \Xi $} ] () {};
		\end{tikzpicture}
\end{equation*}
We recall that the labels in Roman letters $i,j$, and $k$ represent the polynomial decorations.
\end{example}

We prove the following lemma, that is, in some way, a commutation property between the map $\Mloc$ and the two grafting operations we defined above. It will be crucial in the proof of Proposition \ref{graft}, combined with Lemma \ref{lemmaupsilon}.

\begin{lemma}\label{commutggraft}
The following holds on $\mcT_0$.
\begin{equation}
\ggraft_a\Mloc=\Mloc\sum_{k\in\N^{d+1}}\frac{1}{k!}\ggraft_{a-k,k}.
\end{equation}
\end{lemma}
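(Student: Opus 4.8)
The plan is to prove the identity by induction on the number of edges of the input tree, exploiting the recursive definitions of $\Mloc$, $\ggraft_a$, and $\ggraft_{a,k}$. Since all three operators are defined inductively on trees of the form $\zeta_l Y^\mfv \prod_{i=1}^n \mcI_{a_i}(\tau_i)$ by a root term plus a sum of terms where the operator is applied to a single subtree $\tau_i$, the induction is natural: the subtrees $\tau_i$ have fewer edges, so the identity holds for them by the inductive hypothesis. The base case is a tree with no edges, i.e. $\zeta_l Y^\mfv$; there $\Mloc(\zeta_l Y^\mfv) = \zeta_l Y^\mfv$ (the empty product case, with $k=0$ the only surviving term since $X^0 = \one$), and both sides reduce to $\mfv \CIg_a(\one)\zeta_l Y^\mfv = \mfv \sum_j \binom{0}{j} \one \cdot \CIg_{a-j}(X^j) \zeta_l Y^\mfv$, which matches because $\binom{0}{j}$ is nonzero only for $j=0$.

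For the inductive step, I would write $\tau = \zeta_l Y^\mfv \prod_{i=1}^n \mcI_{a_i}(\tau_i)$ and compute both sides. On the left, $\ggraft_a \Mloc \tau = \ggraft_a \big( \sum_{k} \frac{X^k}{k!} \zeta_l Y^\mfv \prod_i \mcI_{a_i}(\Mloc \tau_i) \big)$; applying the definition of $\ggraft_a$ to each term in the sum over $k$ (which now carries a genuine polynomial decoration $X^k$ at the root), the root contribution produces $\mfv \sum_{j} \binom{k}{j} X^{k-j} \CIg_{a-j}(X^j)$ times the rest, and the non-root contribution produces $\sum_i \mcI_{a_i}(\ggraft_a \Mloc \tau_i) \prod_{k' \neq i} \mcI_{a_{k'}}(\Mloc \tau_{k'})$ (keeping the $X^k$ decoration). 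On the right, $\Mloc \sum_{k} \frac{1}{k!} \ggraft_{a-k,k} \tau$: first apply $\ggraft_{a-k,k}$, whose root term is $\mfv \CIg_{a-k}(X^k) \zeta_l Y^\mfv \prod_i \mcI_{a_i}(\tau_i)$ and whose non-root term is $\sum_i \mcI_{a_i}(\ggraft_{a-k,k}\tau_i)\prod_{j\neq i}\mcI_{a_j}(\tau_j)$, then apply $\Mloc$, which Taylor-expands the root (introducing a fresh sum $\sum_{m}\frac{X^m}{m!}$) and commutes into the subtrees multiplicatively.

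The heart of the matter is a bookkeeping check that the two triple-indexed sums coincide. For the root terms, the left side gives $\sum_{k,j}\frac{1}{k!}\binom{k}{j} X^{k-j}\CIg_{a-j}(X^j)$, while the right side gives $\Mloc$ applied to $\sum_{k}\frac{1}{k!}\mfv\CIg_{a-k}(X^k)(\cdots)$, i.e. $\sum_{k,m}\frac{1}{k!\,m!}\frac{X^m}{m!}\cdots$ — wait, more carefully, $\Mloc$ of $\CIg_{a-k}(X^k)\zeta_l Y^\mfv \prod_i \mcI_{a_i}(\tau_i)$ expands the root polynomial via $\sum_m \frac{X^m}{m!}$ and leaves $\CIg_{a-k}(X^k)$ untouched (grey edges are leaves, not touched by $\Mloc$), giving $\sum_m \frac{X^m}{m!}\CIg_{a-k}(X^k)\zeta_l Y^\mfv \prod_i \mcI_{a_i}(\Mloc\tau_i)$. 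Matching powers: on the left set $k - j = m$, so $\sum_{j,m}\frac{1}{(m+j)!}\binom{m+j}{j}X^m \CIg_{a-j}(X^j) = \sum_{j,m}\frac{1}{m!\,j!}X^m\CIg_{a-j}(X^j)$, which agrees with the right side after relabelling $k \leftrightarrow j$. For the non-root terms, the left gives $\sum_i \sum_k \frac{X^k}{k!}\mcI_{a_i}(\ggraft_a\Mloc\tau_i)\prod_{k'\neq i}\mcI_{a_{k'}}(\Mloc\tau_{k'})$ and the right gives $\Mloc\sum_i\sum_k\frac1{k!}\mcI_{a_i}(\ggraft_{a-k,k}\tau_i)\prod_{j\neq i}\mcI_{a_j}(\tau_j)$; pushing $\Mloc$ inside using its multiplicativity and the inductive hypothesis $\ggraft_a\Mloc\tau_i = \Mloc\sum_k\frac1{k!}\ggraft_{a-k,k}\tau_i$ closes the induction. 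The main obstacle I anticipate is purely combinatorial: correctly tracking the interaction of the Taylor-expansion sum introduced by $\Mloc$ at the root with the binomial expansion in $\ggraft_a$ and the index shift $a \mapsto a-k$ in $\ggraft_{a-k,k}$, and verifying the Vandermonde-type identity $\frac{1}{(m+j)!}\binom{m+j}{j} = \frac{1}{m!\,j!}$ that makes the root terms match; once the indices are aligned this is routine, but it is easy to misplace a factorial.
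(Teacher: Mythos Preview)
Your approach is essentially identical to the paper's: induction on the tree structure, splitting into the root contribution and the non-root (subtree) contributions, using the identity $\frac{1}{k!}\binom{k}{j}=\frac{1}{(k-j)!\,j!}$ to match the root terms, and invoking the inductive hypothesis on each $\tau_i$ for the non-root terms. The paper carries this out in exactly the same order.

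One small slip in your base case: $\Mloc(\zeta_l Y^\mfv)$ is \emph{not} equal to $\zeta_l Y^\mfv$; by definition it equals $\sum_{k}\frac{X^k}{k!}\zeta_l Y^\mfv$, since the sum over $k$ is present regardless of whether the product $\prod_i$ is empty. So your stated base case (``both sides reduce to $\mfv\,\CIg_a(\one)\zeta_l Y^\mfv$'') is incorrect as written. This is not a real gap, however, because your general inductive computation already handles the $n=0$ case correctly: with no subtrees the non-root sum is empty, and the root-term identity you verify gives the base case automatically. The paper in fact does not isolate a separate base case for this reason.
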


\begin{proof}
For conciseness, we write the proof in the case where $\mfv=1$ at the root, the proof in the other case being the same but easier. We then voluntarily forget the symbol $Y^1$.
\begin{equs}
\ggraft_a\Mloc\left(\zeta_l\prod_{i=1}^n\mcI_{a_i}(\tau_i)\right)&=\sum_{k\in\N^{d+1}}\frac{1}{k!}\sum_{j\in\N^{d+1}}\binom{k}{j}X^{k-j}\CIg_{a-j}(X^j)\zeta_l\prod_{i=1}^n\mcI_{a_i}(\Mloc\tau_i)\\
&~~~~+\sum_{i=1}^n\zeta_l\frac{X^k}{k!}\mcI_{a_i}(\ggraft_a\Mloc\tau_i)\prod_{m\neq i}\mcI_{a_m}(\Mloc\tau_m)\\
&=\sum_{k\in\N^{d+1}}\frac{X^k}{k!}\sum_{j\in\N^{d+1}}\frac{1}{j!}\CIg_{a-j}(X^j)\zeta_l\prod_{i=1}^n\mcI_{a_i}(\Mloc\tau_i)\\
&~~~~+\sum_{i=1}^n\zeta_l\frac{X^k}{k!}\sum_{j\in\N^{d+1}}\frac{1}{j!}\mcI_{a_i}(\Mloc\ggraft_{a-j,j}\tau_i)\prod_{m\neq i}\mcI_{a_m}(\Mloc\tau_m)\\
&=\Mloc\Biggl(\sum_{j\in\N^{d+1}}\frac{1}{j!}\CIg_{a-j}(X^j)\zeta_l\prod_{i=1}^n\mcI_{a_i}(\tau_i)\\
&~~~~+\sum_{i=1}^n\zeta_l\sum_{j\in\N^{d+1}}\frac{1}{j!}\mcI_{a_i}(\ggraft_{a-j,j}\tau_i)\prod_{m\neq i}\mcI_{a_m}(\tau_m)\Biggl)\\
&=\left(\Mloc\sum_{j\in\N^{d+1}}\frac{1}{j!}\ggraft_{a-j,j}\right)\left(\zeta_l\prod_{i=1}^n\mcI_{a_i}(\tau_i)\right).
\end{equs}
\end{proof}

\textit{Dual coproduct of $\rgraft_a$}. We define a family of coproducts $(\Delta_a)_{a\in\N^{d+1}}$ that satisfies a duality property that will be key in Section \eqref{coherence}.

\begin{definition}
For a given $a\in\N^{d+1}$, we define a coproduct $\Delta_a:\mcT_1\to\mcT_0\otimes\mcT_0$ inductively by
\begin{equation}
\begin{split}
&\Delta_a Y^0=Y^0\otimes Y^0,~~~~\Delta_a\Xi Y^0=\Xi Y^0\otimes Y^1,\\
&\Delta_a\mcI'_b(\tau)=(\mathrm{id}\otimes\mcI_b)\Delta_a\tau+\one_{a=b}\tau\otimes Y^1,\\
&\Delta_a\mcI_b(\tau)=(\mathrm{id}\otimes\mcI_b)\Delta_a\tau,\\
&\Delta_a(\sigma\tau)=\Delta_a\sigma\times(\one\otimes\tau)+(\one\otimes\sigma)\times\Delta_a\tau.
\end{split}
\end{equation}
\end{definition}

\begin{example}
Let $a\neq b$, we have
\begin{align*}
\Delta_a\begin{tikzpicture}[scale=0.2,baseline=0.1cm]
			\node at (0,0)  [dot,label= {[label distance=-0.2em]below: \scriptsize  $      $} ] (root) {};
			\node at (2,3)  [dot,label={[label distance=-0.2em]right: \scriptsize  $  $}] (right) {};
			\node at (-2,3)  [dot,label={[label distance=-0.2em]above: \scriptsize  $ $} ] (left) {};
			\node at (2,5.5)  [dot,label={[label distance=-0.2em]above: \scriptsize  $ $} ] (rightc) {};
			\node at (-2,5.5)  [dot,label={[label distance=-0.2em]above: \scriptsize  $ $} ] (leftc) {};
			\draw[kernel1] (right) to
			node [sloped,below] {\small }     (root); 
			\draw[kernel1] (right) to
			node [sloped,below] {\small }     (rightc); 
			\draw[kernel1,color=red] (left) to
			node [sloped,below] {\small }     (root);
			\draw[kernel1] (leftc) to
			node [sloped,below] {\small }     (left);
			\node at (2,4.25) [fill=white,label={[label distance=0em]center: \scriptsize  $ \Xi $} ] () {};
			\node at (-1.25,1.5) [fill=white,label={[label distance=0em]center: \scriptsize  $ a $} ] () {};
			\node at (1.25,1.5) [fill=white,label={[label distance=0em]center: \scriptsize   $ b  $} ] () {};
			\node at (-2,4.25) [fill=white,label={[label distance=0em]center: \scriptsize  $ \Xi $} ] () {};
		\end{tikzpicture}=\Xi\otimes\begin{tikzpicture}[scale=0.2,baseline=0.1cm]
			\node at (0,0)  [dot,fill=blue,label= {[label distance=-0.2em]below: \scriptsize  $      $} ] (root) {};
			\node at (0,4)  [dot,label= {[label distance=-0.2em]right: \scriptsize  $     $} ] (center) {};
			\node at (0,6.5)  [dot,label= {[label distance=-0.2em]right: \scriptsize  $     $} ] (centerc) {};
			\draw[kernel1] (center) to
			node [sloped,below] {\small }     (root);
			\draw[kernel1] (centerc) to
			node [sloped,below] {\small }     (center);
			\node at (0,2) [fill=white,label={[label distance=0em]center: \scriptsize  $ b  $} ] () {};
			\node at (0,5.25) [fill=white,label={[label distance=0em]center: \scriptsize  $ \Xi  $} ] () {};
		\end{tikzpicture}~~\mathrm{and}~~\Delta_a\begin{tikzpicture}[scale=0.2,baseline=0.1cm]
			\node at (0,0)  [dot,label= {[label distance=-0.2em]below: \scriptsize  $      $} ] (root) {};
			\node at (2,3)  [dot,label={[label distance=-0.2em]right: \scriptsize  $  $}] (right) {};
			\node at (-2,3)  [dot,label={[label distance=-0.2em]above: \scriptsize  $ $} ] (left) {};
			\node at (2,5.5)  [dot,label={[label distance=-0.2em]above: \scriptsize  $ $} ] (rightc) {};
			\node at (-2,5.5)  [dot,label={[label distance=-0.2em]above: \scriptsize  $ $} ] (leftc) {};
			\draw[kernel1,color=red] (right) to
			node [sloped,below] {\small }     (root); 
			\draw[kernel1] (right) to
			node [sloped,below] {\small }     (rightc); 
			\draw[kernel1] (left) to
			node [sloped,below] {\small }     (root);
			\draw[kernel1] (leftc) to
			node [sloped,below] {\small }     (left);
			\node at (2,4.25) [fill=white,label={[label distance=0em]center: \scriptsize  $ \Xi $} ] () {};
			\node at (-1.25,1.5) [fill=white,label={[label distance=0em]center: \scriptsize  $ a $} ] () {};
			\node at (1.25,1.5) [fill=white,label={[label distance=0em]center: \scriptsize   $ b  $} ] () {};
			\node at (-2,4.25) [fill=white,label={[label distance=0em]center: \scriptsize  $ \Xi $} ] () {};
		\end{tikzpicture}=0.
\end{align*}
These two equations make quite clear the use of the indicator function in the definition above. An edge is cut by $\Delta_a$ if and only if it is red and its decoration is $a$. When an edge is taken, the node on which it was attached it turned blue, meaning that it will later be possible to graft on it.
\end{example}

From this definition follows the easy lemma, noting that, on $T_1$, $\Delta_a$ is a coproduct that acts by cutting trees at the edges of type $\mcI'$, and the trees of $T_1$ contain at most one edge of this type.

\begin{lemma}\label{duality}
We have the duality relation, for $\tau,\sigma\in\mcT_0$, $\mu\in \mcT_1$,
\begin{equation}
\langle\tau\rgraft_a\sigma,\mu\rangle=\langle\tau\otimes\sigma,\Delta_a\mu\rangle.
\end{equation}
\end{lemma}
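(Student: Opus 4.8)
The statement is the duality relation $\langle\tau\rgraft_a\sigma,\mu\rangle=\langle\tau\otimes\sigma,\Delta_a\mu\rangle$ for $\tau,\sigma\in\mcT_0$ and $\mu\in\mcT_1$. Since everything in sight is defined recursively (the grafting $\rgraft_a$, the coproduct $\Delta_a$, the symmetry factor $S$, and hence the inner product $\langle\cdot,\cdot\rangle$), the natural approach is a single induction on the size (number of edges, say) of the tree $\mu$, and by bilinearity it suffices to treat basis trees $\tau,\sigma\in T_0$ and $\mu\in T_1$. The base cases are $\mu=Y^0=\one$ and $\mu=\Xi Y^0$: here $\Delta_a$ produces $Y^0\otimes Y^0$ resp. $\Xi Y^0\otimes Y^1$, which pairs with $\tau\otimes\sigma$ only when $\tau$ is the corresponding noise-or-unit factor and $\sigma=\one$ (noting $S(\one)=S(\Xi)=1$); on the other side $\tau\rgraft_a\sigma$ with $\sigma=\one$ is just... well, one needs $\mcI'_a(\tau)$ to match, which forces $\mu$ to contain an $\mcI'$-edge, so for these particular $\mu$ both sides vanish unless $\tau,\sigma$ degenerate appropriately — this is a quick check.

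**Inductive step.** Write a general $\mu\in T_1$ using the decomposition from the paper, $\mu=\zeta_l X^k Y^{\mfv}\prod_{i}\mcI_{a_i}(\mu_i)\prod_j \mcI'_{b_j}(\mu'_j)$, but since $\mu\in T_1$ there is at most one $\mcI'$-edge; restricting further to the support of the pairing, $\mu$ has no $X^k$ and no grey edges. Two cases: (i) $\mu$ has no $\mcI'$-edge, i.e. $\mu=\zeta_l\prod_i \mcI_{a_i}(\mu_i)$ with all $\mu_i\in T_0$; then $\Delta_a\mu=(\mathrm{id}\otimes\prod\mcI_{a_i})\Delta_a(\text{product})$, which by the multiplicativity rule and the $\mcI_b$-rule never produces a cut at $a$, so $\langle\tau\otimes\sigma,\Delta_a\mu\rangle$ can only be nonzero if $\sigma=\one$ — and correspondingly $\tau\rgraft_a\sigma$ must land on a tree with an $\mcI'$-edge, hence is orthogonal to $\mu$. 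Both sides vanish. (ii) $\mu=\mcI'_a(\mu_0)\cdot\nu$ where $\nu=\zeta_l\prod_i\mcI_{a_i}(\mu_i)$ is the rest (with $\mu_0,\mu_i\in T_0$ and $\nu\in T_0$). Now expand both sides. On the left: $\tau\rgraft_a\sigma$ by definition either places $\mcI'_a(\sigma)$ at the root (producing $\mcI'_a(\sigma)\cdot\zeta_l Y^0\prod\mcI_{a_i}(\mu_i)$-type terms when $\tau$ has the right shape) or recurses into a subtree $\mcI_{a_i}(\cdot)$; pairing against $\mu=\mcI'_a(\mu_0)\cdot\nu$ picks out exactly the terms matching $\mcI'_a(\mu_0)$ (forcing $\sigma=\mu_0$, contributing the factor $S(\mcI'_a(\sigma))=S(\sigma)$) together with the recursive pairing of $\tau\rgraft_a(\text{subtree})$ against the $\mcI_{a_i}$-subtrees of $\nu$. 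On the right: $\Delta_a\mcI'_a(\mu_0)=(\mathrm{id}\otimes\mcI_a)\Delta_a\mu_0+\mu_0\otimes Y^1$, and combining with the product rule and $\Delta_a$ of the $\mcI_{a_i}(\mu_i)$-factors yields a sum of two kinds of terms: the "$\mu_0\otimes Y^1$" term (the cut of the distinguished $\mcI'_a$-edge, turning the node blue) which pairs with $\tau\otimes\sigma$ exactly when $\sigma=\mu_0$, and the terms $(\mathrm{id}\otimes\mcI_{b})\Delta_a(\cdots)$ which by the inductive hypothesis applied to the strictly smaller subtrees reproduce the recursive pairing $\langle \tau'\rgraft_a(\text{subtree}),\cdot\rangle$. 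Matching these term-by-term, keeping careful track of the symmetry factors via the inductive formula $S(\tau)=k!\prod S(\tau_i)^{\beta_i}\beta_i!$ and the definition $\langle\sigma,\tau\rangle=S(\tau)\one_{\sigma=\tau}$, closes the induction.

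**Main obstacle.** The routine part is the recursion bookkeeping; the genuinely delicate point is the combinatorics of \emph{repeated identical subtrees} — when several $\mcI_{a_i}(\mu_i)$ factors coincide — because then both $\rgraft_a$ (which grafts on one of them, with multiplicity) and $\Delta_a$ (which cuts/recurses into one of them) generate symmetry multiplicities that must cancel precisely against the $\beta_i!$ factors built into $S(\tau)$ and into the inner product's normalisation. I would handle this exactly as duality of grafting/coproduct is handled in the regularity-structures literature (cf. the treatment surrounding the pairing in \cite{BB21b}): verify that the multiplicity with which a given tree appears in $\tau\rgraft_a\sigma$, weighted by $S$ of the result, equals the multiplicity in $\Delta_a\mu$ weighted appropriately — a short computation with multinomial coefficients. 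A secondary (but easy) subtlety is confirming that the $Y^0/Y^1$ marker bookkeeping is consistent: the root node of $\mu_0$ is untouched while the node carrying the cut $\mcI'_a$-edge becomes $Y^1$ on the right, matching the $Y^0\to\mcI'_a(\sigma)\cdot(\cdots Y^0)$ flip on the left; this is exactly the content of the worked example preceding the lemma and requires only that one reads off the definitions carefully.
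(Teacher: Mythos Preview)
Your inductive strategy is correct and is essentially what the paper leaves implicit — the paper gives no proof, only the remark that ``for $\mu\in T_1$, $\Delta_a$ produces at most one cut''. That remark is the simplification you are under-using: since $\mu\in T_1$ carries at most one $\mcI'$-edge, $\Delta_a\mu$ is either zero (no $\mcI'_a$-edge present) or a \emph{single} tensor $\mu_1\otimes\mu_2$ obtained by cutting that unique edge. There is no sum over cuts, so the ``main obstacle'' you flag is lighter than you suggest: the only multiplicity is on the grafting side (several automorphism-equivalent nodes of $\sigma$ may receive the graft), and the identity $(\text{number of such nodes})\cdot S(\mu)=S(\mu_1)S(\mu_2)$ is a one-line multinomial check from the recursive formula for $S$ — the same bookkeeping as in Lemma~\ref{combidmu}.

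Two local corrections. First, in case~(ii) you write ``places $\mcI'_a(\sigma)$ at the root'': this should be $\mcI'_a(\tau)$, since in $\tau\rgraft_a\sigma$ it is $\tau$ that is grafted onto $\sigma$. Second, your case split (i)/(ii) is incomplete: when $\mu\in T_1\setminus T_0$ but the unique $\mcI'$-edge lies strictly inside some $\mcI_{a_i}(\mu_i)$ rather than at the root, neither of your cases applies. This missing case is handled by the $\mcI_b$-rule for $\Delta_a$ together with the recursive branch of $\rgraft_a$, reducing directly to the inductive hypothesis on the smaller tree $\mu_i$.
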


\subsection{The abstract flow equation}\label{abstractflow}

With these definitions at hand, we are ready to show how the flow equation on the coefficients can be represented at the level of trees. We start with the abstract counterpart of the derivative $\D_\mu$. We choose again to give an inductive definition. Note that this definition is designed so that the abstract derivative will satisfy the same algebraic properties as the classical derivation. 

\begin{definition}[Abstract derivative]
We define a map $\dmu:\mcT_0\to\mcT_1$ by setting $\dmu\zeta_l=\zeta_l$ and inductively
\begin{equation}
\dmu\left(\zeta_lY^\mfv\prod_{i=1}^n\mcI_{a_i}(\tau_i)\right)=\zeta_lY^\mfv\sum_{i=1}^n\big(\mcI'_{a_i}(\tau_i)+\mcI_{a_i}(\dmu\tau_i)\big)\prod_{j\neq i}\mcI_{a_j}(\tau_j).
\end{equation}
We have not indicated explicitly the decoration $\mfv$ as $\dmu$ does not depend on it. 
\end{definition}

This definition being recursive and not so intuitive, we present a non-inductive version that will give a better intuition at many steps throughout the paper, even though the proofs will mostly be based on the inductive definition.

\begin{proposition}
It is easy to see that $\dmu$ also satisfies the non-inductive relation
\begin{equation}
\dmu\tau=\sum_{e\in E_\tau}\uparrow_\mu^e\tau,
\end{equation}
where the symbol $\uparrow_\mu^e$ changes the symbol of the edge $e$ from $\mcI$ to $\mcI'$.
\end{proposition}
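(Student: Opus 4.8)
The proof is a straightforward induction on the structure of the decorated tree $\tau \in T_0$. I would set up the induction on the number of edges of $\tau$ (or equivalently on the depth), the base case being $\tau = \zeta_l$ with $l \in \{0,1\}$, for which $E_\tau = \emptyset$ and both sides of the claimed identity reduce to $\dmu \zeta_l = \zeta_l$ on the left and the empty sum plus $\zeta_l$ convention on the right, so they agree. (One should be careful that the non-inductive formula is read with the understanding that $\uparrow_\mu^e$ is applied edge by edge to the fixed tree $\tau$, and that for $\tau = \zeta_l$ the statement is just $\dmu\zeta_l = \zeta_l$.)

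For the inductive step, I would write $\tau = \zeta_l Y^\mfv \prod_{i=1}^n \mcI_{a_i}(\tau_i)$ and apply the inductive definition of $\dmu$, which gives a sum over $i$ of two kinds of terms: one where the edge above the root carrying $\mcI_{a_i}$ is switched to $\mcI'_{a_i}$, and one of the form $\mcI_{a_i}(\dmu \tau_i)$ with the other branches untouched. The edge set decomposes as $E_\tau = \{e_1,\dots,e_n\} \sqcup \bigsqcup_{i=1}^n E_{\tau_i}$, where $e_i$ is the edge joining the root to the subtree $\mcI_{a_i}(\tau_i)$. The term $\mcI'_{a_i}(\tau_i)\prod_{j\neq i}\mcI_{a_j}(\tau_j)$, after multiplying by $\zeta_l Y^\mfv$, is exactly $\uparrow_\mu^{e_i}\tau$, since switching $e_i$ only changes $\mcI_{a_i}$ to $\mcI'_{a_i}$. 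For the other family of terms, I would apply the inductive hypothesis to each $\tau_i$: $\dmu\tau_i = \sum_{e \in E_{\tau_i}} \uparrow_\mu^e \tau_i$, and then observe that $\zeta_l Y^\mfv \mcI_{a_i}(\uparrow_\mu^e \tau_i)\prod_{j\neq i}\mcI_{a_j}(\tau_j) = \uparrow_\mu^e \tau$ for $e \in E_{\tau_i}$, because an interior edge switch commutes with building the tree from its root branches. Summing over $i$ and over $e \in E_{\tau_i}$, and combining with the $n$ boundary terms $\uparrow_\mu^{e_i}\tau$, reconstitutes exactly $\sum_{e \in E_\tau}\uparrow_\mu^e \tau$.

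There is essentially no serious obstacle here; the only point requiring a little care is the bookkeeping identity $\zeta_l Y^\mfv \mcI_{a_i}(\uparrow_\mu^e\tau_i)\prod_{j\neq i}\mcI_{a_j}(\tau_j) = \uparrow_\mu^e\tau$ for $e \in E_{\tau_i}$, i.e.\ that the operator $\uparrow_\mu^e$ is well-defined on $\tau$ independently of whether we view $e$ as sitting in $\tau$ or in the subtree $\tau_i$, and that it does not interact with the $\zeta_l$, $Y^\mfv$ decorations or with the tree product. This is immediate from the fact that $\uparrow_\mu^e$ only relabels the type of a single integration edge and leaves every other edge, every node decoration, and the tree structure untouched. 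Since $\dmu$ does not depend on $\mfv$, the computation is identical for $\mfv = 0$ and $\mfv = 1$, so there is no need to split cases. This yields the claimed non-inductive formula $\dmu\tau = \sum_{e \in E_\tau}\uparrow_\mu^e\tau$.
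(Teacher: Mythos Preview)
Your proof is correct and follows essentially the same inductive approach as the paper. The only difference is organizational: the paper splits the inductive step into the case of a planted tree $\mcI_a(\tau)$ and the case of a tree product $\sigma\tau$ (invoking the Leibniz rule for $\dmu$ in the latter), whereas you handle the general form $\zeta_l Y^\mfv \prod_{i=1}^n \mcI_{a_i}(\tau_i)$ in a single pass---this amounts to the same computation, and your careful bookkeeping of the edge decomposition $E_\tau = \{e_1,\dots,e_n\}\sqcup\bigsqcup_i E_{\tau_i}$ is exactly the content of the paper's two cases combined.
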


\begin{proof}
We proceed by induction on the depth. The proposition is trivial on elementary trees since $\dmu$ does not act on them. We have for a planted tree $\mcI_a(\tau)$, denoting $e_0$ the edge corresponding to $\mcI_a$,
\begin{align*}
\dmu\mcI_a(\tau)&=\mcI'_a(\tau)+\mcI_a(\dmu\tau)=\dmu^{e_0}\mcI_a(\tau)+\sum_{e\in E_\tau}\mcI(\uparrow_\mu^e\tau)=\sum_{e\in E_{\mcI_a(\tau)}}\uparrow_\mu^e\mcI_a(\tau)
\end{align*}
For the product of two trees $\sigma$ and $\tau$, it is easy to see that $\dmu$ satisfies a Leibniz rule. It yields
\begin{align*}
\dmu(\sigma\tau)&=\dmu\sigma\times\tau+\sigma\times\dmu\tau=\sum_{e\in E_\sigma}\uparrow_\mu^e\sigma\times\tau+\sum_{e\in E_\tau}\sigma\times\uparrow_\mu^e\tau\\
&=\sum_{e\in E_\sigma}\uparrow_\mu^e(\sigma\tau)+\sum_{e\in E_\tau}\uparrow_\mu^e(\sigma\tau)=\sum_{e\in E_{\sigma\tau}}\dmu^e(\sigma\tau),
\end{align*}
which concludes the proof.
\end{proof}

\begin{example}\label{exdmu}
Let us give an example of how $\dmu$ acts for clarity.
\begin{equation*}
	 \dmu 	\begin{tikzpicture}[scale=0.2,baseline=0.1cm]
			\node at (0,0)  [dot,label= {[label distance=-0.2em]below: \scriptsize  $      $} ] (root) {};
			\node at (0,5)  [dot,label= {[label distance=-0.2em]right: \scriptsize  $     $} ] (center) {};
			\node at (0,9)  [dot,label= {[label distance=-0.2em]above: \scriptsize  $    $} ] (centerc) {};
			\node at (3,3)  [dot,label={[label distance=-0.2em]right: \scriptsize  $  $}] (right) {};
			\node at (-3,3)  [dot,label={[label distance=-0.2em]above: \scriptsize  $ $} ] (left) {};
			\node at (4.5,4.5)  [dot,label={[label distance=-0.2em]above: \scriptsize  $ $} ] (rightc) {};
			\node at (-4.5,4.5)  [dot,label={[label distance=-0.2em]above: \scriptsize  $ $} ] (leftc) {};
			\node at (0,11.5)  [dot,label={[label distance=-0.2em]above: \scriptsize  $ $} ] (top) {};
			\node at (-3,10.5)  [dot,label={[label distance=-0.2em]above: \scriptsize  $ $} ] (top1) {};
			\node at (3,10.5)  [dot,label={[label distance=-0.2em]above: \scriptsize  $ $} ] (top2) {};
			\node at (3,8)  [dot,label={[label distance=-0.2em]above: \scriptsize  $ $} ] (mid1) {};
			\node at (-3,8)  [dot,label={[label distance=-0.2em]above: \scriptsize  $ $} ] (mid2) {};
			\draw[kernel1] (right) to
			node [sloped,below] {\small }     (root); 
			\draw[kernel1] (center) to
			node [sloped,below] {\small }     (root);
			\draw[kernel1] (center) to
			node [sloped,below] {\small }     (centerc);
			\draw[kernel1] (right) to
			node [sloped,below] {\small }     (rightc); 
			\draw[kernel1] (left) to
			node [sloped,below] {\small }     (root);
			\draw[kernel1] (leftc) to
			node [sloped,below] {\small }     (left);
			\draw[kernel1] (top1) to
			node [sloped,below] {\small }     (mid2);
			\draw[kernel1] (top2) to
			node [sloped,below] {\small }     (mid1);
			\draw[kernel1] (top) to
			node [sloped,below] {\small }     (centerc);
			\draw[kernel1] (mid1) to
			node [sloped,below] {\small }     (center);
			\draw[kernel1] (mid2) to
			node [sloped,below] {\small }     (center);
			\node at (3.5,4) [fill=white,label={[label distance=0em]center: \scriptsize  $ \Xi $} ] () {};
			\node at (0,7) [fill=white,label={[label distance=0em]center: \scriptsize  $ a $} ] () {};
			\node at (-1.5,1.5) [fill=white,label={[label distance=0em]center: \scriptsize  $ a $} ] () {};
			\node at (1.5,1.5) [fill=white,label={[label distance=0em]center: \scriptsize   $ a  $} ] () {};
			\node at (0,2.5) [fill=white,label={[label distance=0em]center: \scriptsize  $ a  $} ] () {};
			\node at (-3.5,4) [fill=white,label={[label distance=0em]center: \scriptsize  $ \Xi $} ] () {};
			\node at (1.5,1.5) [fill=white,label={[label distance=0em]center: \scriptsize   $ a  $} ] () {};
			\node at (-2,6.75) [fill=white,label={[label distance=0em]center: \scriptsize  $ a $} ] () {};
			\node at (2,6.75) [fill=white,label={[label distance=0em]center: \scriptsize  $ a $} ] () {};
			\node at (0,10.25) [fill=white,label={[label distance=0em]center: \scriptsize  $ \Xi $} ] () {};
			\node at (-3,9.25) [fill=white,label={[label distance=0em]center: \scriptsize  $ \Xi $} ] () {};
			\node at (3,9.25) [fill=white,label={[label distance=0em]center: \scriptsize  $ \Xi $} ] () {};
		\end{tikzpicture}
		=
		2	\begin{tikzpicture}[scale=0.2,baseline=0.1cm]
			\node at (0,0)  [dot,label= {[label distance=-0.2em]below: \scriptsize  $      $} ] (root) {};
			\node at (0,5)  [dot,label= {[label distance=-0.2em]right: \scriptsize  $     $} ] (center) {};
			\node at (0,9)  [dot,label= {[label distance=-0.2em]above: \scriptsize  $    $} ] (centerc) {};
			\node at (3,3)  [dot,label={[label distance=-0.2em]right: \scriptsize  $  $}] (right) {};
			\node at (-3,3)  [dot,label={[label distance=-0.2em]above: \scriptsize  $ $} ] (left) {};
			\node at (4.5,4.5)  [dot,label={[label distance=-0.2em]above: \scriptsize  $ $} ] (rightc) {};
			\node at (-4.5,4.5)  [dot,label={[label distance=-0.2em]above: \scriptsize  $ $} ] (leftc) {};
			\node at (0,11.5)  [dot,label={[label distance=-0.2em]above: \scriptsize  $ $} ] (top) {};
			\node at (-3,10.5)  [dot,label={[label distance=-0.2em]above: \scriptsize  $ $} ] (top1) {};
			\node at (3,10.5)  [dot,label={[label distance=-0.2em]above: \scriptsize  $ $} ] (top2) {};
			\node at (3,8)  [dot,label={[label distance=-0.2em]above: \scriptsize  $ $} ] (mid1) {};
			\node at (-3,8)  [dot,label={[label distance=-0.2em]above: \scriptsize  $ $} ] (mid2) {};
			\draw[kernel1] (right) to
			node [sloped,below] {\small }     (root); 
			\draw[kernel1] (center) to
			node [sloped,below] {\small }     (root);
			\draw[kernel1] (center) to
			node [sloped,below] {\small }     (centerc);
			\draw[kernel1] (right) to
			node [sloped,below] {\small }     (rightc); 
			\draw[kernel1,color=red] (left) to
			node [sloped,below] {\small }     (root);
			\draw[kernel1] (leftc) to
			node [sloped,below] {\small }     (left);
			\draw[kernel1] (top1) to
			node [sloped,below] {\small }     (mid2);
			\draw[kernel1] (top2) to
			node [sloped,below] {\small }     (mid1);
			\draw[kernel1] (top) to
			node [sloped,below] {\small }     (centerc);
			\draw[kernel1] (mid1) to
			node [sloped,below] {\small }     (center);
			\draw[kernel1] (mid2) to
			node [sloped,below] {\small }     (center);
			\node at (3.5,4) [fill=white,label={[label distance=0em]center: \scriptsize  $ \Xi $} ] () {};
			\node at (0,7) [fill=white,label={[label distance=0em]center: \scriptsize  $ a $} ] () {};
			\node at (-1.5,1.5) [fill=white,label={[label distance=0em]center: \scriptsize  $ a $} ] () {};
			\node at (1.5,1.5) [fill=white,label={[label distance=0em]center: \scriptsize   $ a  $} ] () {};
			\node at (0,2.5) [fill=white,label={[label distance=0em]center: \scriptsize  $ a  $} ] () {};
			\node at (-3.5,4) [fill=white,label={[label distance=0em]center: \scriptsize  $ \Xi $} ] () {};
			\node at (1.5,1.5) [fill=white,label={[label distance=0em]center: \scriptsize   $ a  $} ] () {};
			\node at (-2,6.75) [fill=white,label={[label distance=0em]center: \scriptsize  $ a $} ] () {};
			\node at (2,6.75) [fill=white,label={[label distance=0em]center: \scriptsize  $ a $} ] () {};
			\node at (0,10.25) [fill=white,label={[label distance=0em]center: \scriptsize  $ \Xi $} ] () {};
			\node at (-3,9.25) [fill=white,label={[label distance=0em]center: \scriptsize  $ \Xi $} ] () {};
			\node at (3,9.25) [fill=white,label={[label distance=0em]center: \scriptsize  $ \Xi $} ] () {};
		\end{tikzpicture}
		+	\begin{tikzpicture}[scale=0.2,baseline=0.1cm]
			\node at (0,0)  [dot,label= {[label distance=-0.2em]below: \scriptsize  $      $} ] (root) {};
			\node at (0,5)  [dot,label= {[label distance=-0.2em]right: \scriptsize  $     $} ] (center) {};
			\node at (0,9)  [dot,label= {[label distance=-0.2em]above: \scriptsize  $    $} ] (centerc) {};
			\node at (3,3)  [dot,label={[label distance=-0.2em]right: \scriptsize  $  $}] (right) {};
			\node at (-3,3)  [dot,label={[label distance=-0.2em]above: \scriptsize  $ $} ] (left) {};
			\node at (4.5,4.5)  [dot,label={[label distance=-0.2em]above: \scriptsize  $ $} ] (rightc) {};
			\node at (-4.5,4.5)  [dot,label={[label distance=-0.2em]above: \scriptsize  $ $} ] (leftc) {};
			\node at (0,11.5)  [dot,label={[label distance=-0.2em]above: \scriptsize  $ $} ] (top) {};
			\node at (-3,10.5)  [dot,label={[label distance=-0.2em]above: \scriptsize  $ $} ] (top1) {};
			\node at (3,10.5)  [dot,label={[label distance=-0.2em]above: \scriptsize  $ $} ] (top2) {};
			\node at (3,8)  [dot,label={[label distance=-0.2em]above: \scriptsize  $ $} ] (mid1) {};
			\node at (-3,8)  [dot,label={[label distance=-0.2em]above: \scriptsize  $ $} ] (mid2) {};
			\draw[kernel1] (right) to
			node [sloped,below] {\small }     (root); 
			\draw[kernel1,color=red] (center) to
			node [sloped,below] {\small }     (root);
			\draw[kernel1] (center) to
			node [sloped,below] {\small }     (centerc);
			\draw[kernel1] (right) to
			node [sloped,below] {\small }     (rightc); 
			\draw[kernel1] (left) to
			node [sloped,below] {\small }     (root);
			\draw[kernel1] (leftc) to
			node [sloped,below] {\small }     (left);
			\draw[kernel1] (top1) to
			node [sloped,below] {\small }     (mid2);
			\draw[kernel1] (top2) to
			node [sloped,below] {\small }     (mid1);
			\draw[kernel1] (top) to
			node [sloped,below] {\small }     (centerc);
			\draw[kernel1] (mid1) to
			node [sloped,below] {\small }     (center);
			\draw[kernel1] (mid2) to
			node [sloped,below] {\small }     (center);
			\node at (3.5,4) [fill=white,label={[label distance=0em]center: \scriptsize  $ \Xi $} ] () {};
			\node at (0,7) [fill=white,label={[label distance=0em]center: \scriptsize  $ a $} ] () {};
			\node at (-1.5,1.5) [fill=white,label={[label distance=0em]center: \scriptsize  $ a $} ] () {};
			\node at (1.5,1.5) [fill=white,label={[label distance=0em]center: \scriptsize   $ a  $} ] () {};
			\node at (0,2.5) [fill=white,label={[label distance=0em]center: \scriptsize  $ a  $} ] () {};
			\node at (-3.5,4) [fill=white,label={[label distance=0em]center: \scriptsize  $ \Xi $} ] () {};
			\node at (1.5,1.5) [fill=white,label={[label distance=0em]center: \scriptsize   $ a  $} ] () {};
			\node at (-2,6.75) [fill=white,label={[label distance=0em]center: \scriptsize  $ a $} ] () {};
			\node at (2,6.75) [fill=white,label={[label distance=0em]center: \scriptsize  $ a $} ] () {};
			\node at (0,10.25) [fill=white,label={[label distance=0em]center: \scriptsize  $ \Xi $} ] () {};
			\node at (-3,9.25) [fill=white,label={[label distance=0em]center: \scriptsize  $ \Xi $} ] () {};
			\node at (3,9.25) [fill=white,label={[label distance=0em]center: \scriptsize  $ \Xi $} ] () {};
		\end{tikzpicture}
		+3	\begin{tikzpicture}[scale=0.2,baseline=0.1cm]
			\node at (0,0)  [dot,label= {[label distance=-0.2em]below: \scriptsize  $      $} ] (root) {};
			\node at (0,5)  [dot,label= {[label distance=-0.2em]right: \scriptsize  $     $} ] (center) {};
			\node at (0,9)  [dot,label= {[label distance=-0.2em]above: \scriptsize  $    $} ] (centerc) {};
			\node at (3,3)  [dot,label={[label distance=-0.2em]right: \scriptsize  $  $}] (right) {};
			\node at (-3,3)  [dot,label={[label distance=-0.2em]above: \scriptsize  $ $} ] (left) {};
			\node at (4.5,4.5)  [dot,label={[label distance=-0.2em]above: \scriptsize  $ $} ] (rightc) {};
			\node at (-4.5,4.5)  [dot,label={[label distance=-0.2em]above: \scriptsize  $ $} ] (leftc) {};
			\node at (0,11.5)  [dot,label={[label distance=-0.2em]above: \scriptsize  $ $} ] (top) {};
			\node at (-3,10.5)  [dot,label={[label distance=-0.2em]above: \scriptsize  $ $} ] (top1) {};
			\node at (3,10.5)  [dot,label={[label distance=-0.2em]above: \scriptsize  $ $} ] (top2) {};
			\node at (3,8)  [dot,label={[label distance=-0.2em]above: \scriptsize  $ $} ] (mid1) {};
			\node at (-3,8)  [dot,label={[label distance=-0.2em]above: \scriptsize  $ $} ] (mid2) {};
			\draw[kernel1] (right) to
			node [sloped,below] {\small }     (root); 
			\draw[kernel1] (center) to
			node [sloped,below] {\small }     (root);
			\draw[kernel1] (center) to
			node [sloped,below] {\small }     (centerc);
			\draw[kernel1] (right) to
			node [sloped,below] {\small }     (rightc); 
			\draw[kernel1] (left) to
			node [sloped,below] {\small }     (root);
			\draw[kernel1] (leftc) to
			node [sloped,below] {\small }     (left);
			\draw[kernel1] (top1) to
			node [sloped,below] {\small }     (mid2);
			\draw[kernel1] (top2) to
			node [sloped,below] {\small }     (mid1);
			\draw[kernel1] (top) to
			node [sloped,below] {\small }     (centerc);
			\draw[kernel1] (mid1) to
			node [sloped,below] {\small }     (center);
			\draw[kernel1,color=red] (mid2) to
			node [sloped,below] {\small }     (center);
			\node at (3.5,4) [fill=white,label={[label distance=0em]center: \scriptsize  $ \Xi $} ] () {};
			\node at (0,7) [fill=white,label={[label distance=0em]center: \scriptsize  $ a $} ] () {};
			\node at (-1.5,1.5) [fill=white,label={[label distance=0em]center: \scriptsize  $ a $} ] () {};
			\node at (1.5,1.5) [fill=white,label={[label distance=0em]center: \scriptsize   $ a  $} ] () {};
			\node at (0,2.5) [fill=white,label={[label distance=0em]center: \scriptsize  $ a  $} ] () {};
			\node at (-3.5,4) [fill=white,label={[label distance=0em]center: \scriptsize  $ \Xi $} ] () {};
			\node at (1.5,1.5) [fill=white,label={[label distance=0em]center: \scriptsize   $ a  $} ] () {};
			\node at (-2,6.75) [fill=white,label={[label distance=0em]center: \scriptsize  $ a $} ] () {};
			\node at (2,6.75) [fill=white,label={[label distance=0em]center: \scriptsize  $ a $} ] () {};
			\node at (0,10.25) [fill=white,label={[label distance=0em]center: \scriptsize  $ \Xi $} ] () {};
			\node at (-3,9.25) [fill=white,label={[label distance=0em]center: \scriptsize  $ \Xi $} ] () {};
			\node at (3,9.25) [fill=white,label={[label distance=0em]center: \scriptsize  $ \Xi $} ] () {};
		\end{tikzpicture}.
\end{equation*}
We simply sum over all the edges, turning them red one by one. Since the tree chosen in the present example contains some symmetries, we regroup the terms.
\end{example}

The following lemma, that will be used in Section \eqref{coherence}, gives an easy expression of the combinatorial factors appearing in the example above with symmetry factors. This will later make it interact well with the inner product on trees.

\begin{lemma}\label{combidmu}
One can write, for $\tau\in T_0$,
\begin{equation}
\dmu\tau=\sum_i\frac{S(\tau)}{S(\tau_i)}\tau_i,
\end{equation}
where the $\tau_i$'s are pairwise distinct trees in $T_1$ of the form $\dmu^e\tau$, for $e\in E_\tau$.
\end{lemma}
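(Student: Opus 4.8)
The plan is to proceed by induction on the depth of the tree $\tau \in T_0$, mirroring the structure of the inductive definition of $\dmu$ together with the non-inductive expression $\dmu\tau = \sum_{e\in E_\tau}\uparrow_\mu^e\tau$ proved above. The claim is essentially a bookkeeping statement: the multiset $\{\uparrow_\mu^e\tau : e\in E_\tau\}$ consists of some pairwise distinct trees $\tau_i\in T_1$, each appearing with a multiplicity that equals $S(\tau)/S(\tau_i)$. So the real content is to identify this multiplicity with the ratio of symmetry factors.

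First I would treat the base case: for $\tau=\zeta_l$ there are no edges, $\dmu\zeta_l=\zeta_l$, and the formula reads $\dmu\zeta_l = \frac{S(\zeta_l)}{S(\zeta_l)}\zeta_l = \zeta_l$ since $S(\zeta_l)=1$, so it holds trivially (one may also read the statement with an empty sum plus the convention coming from $\dmu\zeta_l=\zeta_l$). For the inductive step, write $\tau = \zeta_l\prod_{i}\mcI_{a_i}(\tau_i)^{\beta_i}$ with the $(a_i,\tau_i)$ pairwise distinct and $\beta_i$ the multiplicities, so that $S(\tau) = \prod_i S(\tau_i)^{\beta_i}\beta_i!$. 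Applying the Leibniz-type inductive formula for $\dmu$, a red edge can be produced either by turning one of the top edges $\mcI_{a_i}$ into $\mcI'_{a_i}$, or by turning a red edge inside one of the $\tau_i$; in the first case, choosing which of the $\beta_i$ identical copies of $\mcI_{a_i}(\tau_i)$ to mark yields a combinatorial count, and in the second case one invokes the induction hypothesis on $\tau_i$. I would then carefully compute, for each resulting tree $\sigma$ in $T_1$ arising this way, the coefficient with which it appears, and check it equals $S(\tau)/S(\sigma)$ by comparing the factorial/symmetry-factor contributions — the key algebraic identities being that marking one copy among $\beta_i$ multiplies the relevant coefficient by $\beta_i$, while $S$ of the marked tree drops a factor $\beta_i!/(\beta_i-1)!=\beta_i$ in the $i$-th block, and similarly that the induction hypothesis ratio $S(\tau_i)/S(\tau_i')$ combines multiplicatively with the unchanged blocks to give exactly $S(\tau)/S(\sigma)$.

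The main obstacle I expect is the second type of term: when the red edge lands strictly inside a subtree $\tau_i$, one must check that the tree $\mcI_{a_i}(\tau_i') \prod_{j\neq i}\mcI_{a_j}(\tau_j)^{\beta_j}\cdot\mcI_{a_i}(\tau_i)^{\beta_i-1}$ is still reduced correctly (i.e. one genuinely separates one modified copy from the $\beta_i-1$ unmodified ones, and the modified planted tree $\mcI_{a_i}(\tau_i')$ is distinct from the unmodified $\mcI_{a_i}(\tau_i)$ since $\tau_i'\neq\tau_i$), and that the symmetry factor of the result factorizes as $S(\tau_i')\cdot S(\tau_i)^{\beta_i-1}(\beta_i-1)!\cdot\prod_{j\neq i}S(\tau_j)^{\beta_j}\beta_j!$. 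Distinctness of the $\tau_i$'s in the statement then follows because $\uparrow_\mu^e$ and $\uparrow_\mu^{e'}$ give the same tree precisely when $e,e'$ lie in the same orbit under the tree's automorphism group, and the count of such orbits is exactly what the symmetry-factor ratio records. I would finish by noting that the factor $\beta_i$ from choosing a copy among $\beta_i$ identical branches combines with the $1/\beta_i$ coming from $S(\tau)$ versus $S(\sigma)$ to produce the claimed clean formula, and that summing over $i$ and over the edges inside each $\tau_i$ (via the induction hypothesis) reconstructs $\dmu\tau$ in the stated form.
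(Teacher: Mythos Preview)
Your proposal is correct and follows essentially the same approach as the paper: an induction on the depth, writing $\tau=\zeta_l\prod_i\mcI_{a_i}(\tau_i)^{\beta_i}$ with distinct $(a_i,\tau_i)$, splitting $\dmu\tau$ into the contribution from turning a top edge into $\mcI'_{a_i}$ and the contribution from $\dmu$ acting inside a $\tau_i$, and then checking in each case that the multiplicity matches the ratio of symmetry factors via the identities $\beta_i=\beta_i!/(\beta_i-1)!$ and the induction hypothesis. The paper's proof is exactly this computation written out explicitly, so there is no substantive difference.
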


\begin{proof}
As usual, we proceed by induction on the depth. Let us write
\begin{equation*}
\tau=\zeta_l\prod_{i=1}^n\mcI_{a_i}(\tau_i)^{\beta_i},
\end{equation*}
where $(a_i,\tau_i)\neq(a_j,\tau_j)$ for $i\neq j$, and $\beta_i\neq 0$. We proceed by induction, writing for every $i$ $\dmu\tau_i=\sum_k\frac{S(\tau_i)}{S(\tau_{i,k})}\tau_{i,k}$,
\begin{align*}
\dmu\tau&=\sum_{i=1}^n\beta_i\zeta_l\mcI'_{a_i}(\tau_i)\mcI_{a_i}(\tau_i)^{\beta_i-1}\prod_{j\neq i}\mcI_{a_j}(\tau_j)^{\beta_j}\\
&~~~~+\sum_{i=1}^n\beta_i\zeta_l\mcI_{a_i}(\dmu\tau_i)\mcI_{a_i}(\tau_i)^{\beta_i-1}\prod_{j\neq i}\mcI_{a_j}(\tau_j)^{\beta_j}\\
&=\sum_{i=1}^n\beta_i\zeta_l\mcI'_{a_i}(\tau_i)\mcI_{a_i}(\tau_i)^{\beta_i-1}\prod_{j\neq i}\mcI_{a_j}(\tau_j)^{\beta_j}\\
&~~~~+\sum_{i=1}^n\sum_k\frac{S(\tau_i)}{S(\tau_{i,k})}\beta_i\zeta_l\mcI_{a_i}(\tau_{i,k})\mcI_{a_i}(\tau_i)^{\beta_i-1}\prod_{j\neq i}\mcI_{a_j}(\tau_j)^{\beta_j}.
\end{align*}
To finish the proof, one can see the following equalities:
\begin{align*}
\beta_i=\frac{\prod_{j=1}^n S(\tau_j)^{\beta_j}\beta_j!}{(\beta_i-1)!S(\tau_i)^{\beta_i}\prod_{j\neq i} S(\tau_j)^{\beta_j}\beta_j!}
=\frac{S(\tau)}{S\left(\zeta_l\mcI'_{a_i}(\tau_i)\mcI_{a_i}(\tau_i)^{\beta_i-1}\prod_{j\neq i}\mcI_{a_j}(\tau_j)^{\beta_j}\right)}
\end{align*}
and
\begin{align*}
\frac{S(\tau_i)\beta_i}{S(\tau_{i,k})}&=\frac{\prod_{j=1}^n S(\tau_j)^{\beta_j}\beta_j!}{(\beta_i-1)!S(\tau_{i,k})S(\tau_i)^{\beta_i-1}\prod_{j\neq i} S(\tau_j)^{\beta_j}\beta_j!}\\
&=\frac{S(\tau)}{S\left(\zeta_l\mcI_{a_i}(\tau_{i,k})\mcI_{a_i}(\tau_i)^{\beta_i-1}\prod_{j\neq i}\mcI_{a_j}(\tau_j)^{\beta_j}\right)}.
\end{align*}
\end{proof}

\begin{definition}[Elementary cuts coproduct]
We define a coproduct $\Delta_1:\mcT_0^{\mfv=0}\to\mcT_1^{\mfv=0}\otimes\mcT_0$ inductively by
\begin{equation}
\begin{split}
\Delta_1 Y^0=Y^0\otimes Y^0,~~~~~~~~~\Delta_1\Xi Y^0=\Xi Y^0\otimes Y^1,\\
\Delta_1Y^0\mcI_a(\tau)=(\mathrm{id}\otimes\mcI_a)\Delta_1\tau+Y^0\mcI'_a(\tau)\otimes Y^1,\\
\Delta_1(\sigma\tau)=\Delta_1\sigma\times(Y^0\otimes\tau)+(Y^0\otimes\sigma)\times\Delta_1\tau.
\end{split}
\end{equation}
\end{definition}

\begin{example}\label{exdelta1}
We have
\begin{equation*}
\Delta_1\begin{tikzpicture}[scale=0.2,baseline=0.1cm]
			\node at (0,0)  [dot,label= {[label distance=-0.2em]below: \scriptsize  $      $} ] (root) {};
			\node at (0,5)  [dot,label= {[label distance=-0.2em]right: \scriptsize  $     $} ] (center) {};
			\node at (0,9)  [dot,label= {[label distance=-0.2em]above: \scriptsize  $    $} ] (centerc) {};
			\node at (3,3)  [dot,label={[label distance=-0.2em]right: \scriptsize  $  $}] (right) {};
			\node at (-3,3)  [dot,label={[label distance=-0.2em]above: \scriptsize  $ $} ] (left) {};
			\node at (4.5,4.5)  [dot,label={[label distance=-0.2em]above: \scriptsize  $ $} ] (rightc) {};
			\node at (-4.5,4.5)  [dot,label={[label distance=-0.2em]above: \scriptsize  $ $} ] (leftc) {};
			\node at (0,11.5)  [dot,label={[label distance=-0.2em]above: \scriptsize  $ $} ] (top) {};
			\node at (-3,10.5)  [dot,label={[label distance=-0.2em]above: \scriptsize  $ $} ] (top1) {};
			\node at (3,10.5)  [dot,label={[label distance=-0.2em]above: \scriptsize  $ $} ] (top2) {};
			\node at (3,8)  [dot,label={[label distance=-0.2em]above: \scriptsize  $ $} ] (mid1) {};
			\node at (-3,8)  [dot,label={[label distance=-0.2em]above: \scriptsize  $ $} ] (mid2) {};
			\draw[kernel1] (right) to
			node [sloped,below] {\small }     (root); 
			\draw[kernel1] (center) to
			node [sloped,below] {\small }     (root);
			\draw[kernel1] (center) to
			node [sloped,below] {\small }     (centerc);
			\draw[kernel1] (right) to
			node [sloped,below] {\small }     (rightc); 
			\draw[kernel1] (left) to
			node [sloped,below] {\small }     (root);
			\draw[kernel1] (leftc) to
			node [sloped,below] {\small }     (left);
			\draw[kernel1] (top1) to
			node [sloped,below] {\small }     (mid2);
			\draw[kernel1] (top2) to
			node [sloped,below] {\small }     (mid1);
			\draw[kernel1] (top) to
			node [sloped,below] {\small }     (centerc);
			\draw[kernel1] (mid1) to
			node [sloped,below] {\small }     (center);
			\draw[kernel1] (mid2) to
			node [sloped,below] {\small }     (center);
			\node at (3.5,4) [fill=white,label={[label distance=0em]center: \scriptsize  $ \Xi $} ] () {};
			\node at (0,7) [fill=white,label={[label distance=0em]center: \scriptsize  $ a $} ] () {};
			\node at (-1.5,1.5) [fill=white,label={[label distance=0em]center: \scriptsize  $ a $} ] () {};
			\node at (1.5,1.5) [fill=white,label={[label distance=0em]center: \scriptsize   $ a  $} ] () {};
			\node at (0,2.5) [fill=white,label={[label distance=0em]center: \scriptsize  $ a  $} ] () {};
			\node at (-3.5,4) [fill=white,label={[label distance=0em]center: \scriptsize  $ \Xi $} ] () {};
			\node at (-2,6.75) [fill=white,label={[label distance=0em]center: \scriptsize  $ a $} ] () {};
			\node at (2,6.75) [fill=white,label={[label distance=0em]center: \scriptsize  $ a $} ] () {};
			\node at (0,10.25) [fill=white,label={[label distance=0em]center: \scriptsize  $ \Xi $} ] () {};
			\node at (-3,9.25) [fill=white,label={[label distance=0em]center: \scriptsize  $ \Xi $} ] () {};
			\node at (3,9.25) [fill=white,label={[label distance=0em]center: \scriptsize  $ \Xi $} ] () {};
		\end{tikzpicture}=2\begin{tikzpicture}[scale=0.2,baseline=0.1cm]
			\node at (0,0)  [dot,label= {[label distance=-0.2em]below: \scriptsize  $      $} ] (root) {};
			\node at (0,4)  [dot,label= {[label distance=-0.2em]right: \scriptsize  $     $} ] (center) {};
			\node at (0,6.5)  [dot,label= {[label distance=-0.2em]right: \scriptsize  $     $} ] (centerc) {};
			\draw[kernel1,color=red] (center) to
			node [sloped,below] {\small }     (root);
			\draw[kernel1] (centerc) to
			node [sloped,below] {\small }     (center);
			\node at (0,2) [fill=white,label={[label distance=0em]center: \scriptsize  $ a  $} ] () {};
			\node at (0,5.25) [fill=white,label={[label distance=0em]center: \scriptsize  $ \Xi  $} ] () {};
		\end{tikzpicture}\otimes\begin{tikzpicture}[scale=0.2,baseline=0.1cm]
			\node at (0,0)  [dot,fill=blue,label= {[label distance=-0.2em]below: \scriptsize  $      $} ] (root) {};
			\node at (0,5)  [dot,label= {[label distance=-0.2em]right: \scriptsize  $     $} ] (center) {};
			\node at (0,9)  [dot,label= {[label distance=-0.2em]above: \scriptsize  $    $} ] (centerc) {};
			\node at (3,3)  [dot,label={[label distance=-0.2em]right: \scriptsize  $  $}] (right) {};
			\node at (4.5,4.5)  [dot,label={[label distance=-0.2em]above: \scriptsize  $ $} ] (rightc) {};
			\node at (0,11.5)  [dot,label={[label distance=-0.2em]above: \scriptsize  $ $} ] (top) {};
			\node at (-3,10.5)  [dot,label={[label distance=-0.2em]above: \scriptsize  $ $} ] (top1) {};
			\node at (3,10.5)  [dot,label={[label distance=-0.2em]above: \scriptsize  $ $} ] (top2) {};
			\node at (3,8)  [dot,label={[label distance=-0.2em]above: \scriptsize  $ $} ] (mid1) {};
			\node at (-3,8)  [dot,label={[label distance=-0.2em]above: \scriptsize  $ $} ] (mid2) {};
			\draw[kernel1] (right) to
			node [sloped,below] {\small }     (root); 
			\draw[kernel1] (center) to
			node [sloped,below] {\small }     (root);
			\draw[kernel1] (center) to
			node [sloped,below] {\small }     (centerc);
			\draw[kernel1] (right) to
			node [sloped,below] {\small }     (rightc); 
			\draw[kernel1] (top1) to
			node [sloped,below] {\small }     (mid2);
			\draw[kernel1] (top2) to
			node [sloped,below] {\small }     (mid1);
			\draw[kernel1] (top) to
			node [sloped,below] {\small }     (centerc);
			\draw[kernel1] (mid1) to
			node [sloped,below] {\small }     (center);
			\draw[kernel1] (mid2) to
			node [sloped,below] {\small }     (center);
			\node at (3.5,4) [fill=white,label={[label distance=0em]center: \scriptsize  $ \Xi $} ] () {};
			\node at (0,7) [fill=white,label={[label distance=0em]center: \scriptsize  $ a $} ] () {};
			\node at (1.5,1.5) [fill=white,label={[label distance=0em]center: \scriptsize   $ a  $} ] () {};
			\node at (0,2.5) [fill=white,label={[label distance=0em]center: \scriptsize  $ a  $} ] () {};
			\node at (1.5,1.5) [fill=white,label={[label distance=0em]center: \scriptsize   $ a  $} ] () {};
			\node at (-2,6.75) [fill=white,label={[label distance=0em]center: \scriptsize  $ a $} ] () {};
			\node at (2,6.75) [fill=white,label={[label distance=0em]center: \scriptsize  $ a $} ] () {};
			\node at (0,10.25) [fill=white,label={[label distance=0em]center: \scriptsize  $ \Xi $} ] () {};
			\node at (-3,9.25) [fill=white,label={[label distance=0em]center: \scriptsize  $ \Xi $} ] () {};
			\node at (3,9.25) [fill=white,label={[label distance=0em]center: \scriptsize  $ \Xi $} ] () {};
		\end{tikzpicture}+\begin{tikzpicture}[scale=0.2,baseline=0.1cm]
			\node at (0,0)  [dot,label= {[label distance=-0.2em]below: \scriptsize  $      $} ] (root) {};
			\node at (0,5)  [dot,label= {[label distance=-0.2em]right: \scriptsize  $     $} ] (center) {};
			\node at (0,9)  [dot,label= {[label distance=-0.2em]above: \scriptsize  $    $} ] (centerc) {};
			\node at (0,11.5)  [dot,label={[label distance=-0.2em]above: \scriptsize  $ $} ] (top) {};
			\node at (-3,10.5)  [dot,label={[label distance=-0.2em]above: \scriptsize  $ $} ] (top1) {};
			\node at (3,10.5)  [dot,label={[label distance=-0.2em]above: \scriptsize  $ $} ] (top2) {};
			\node at (3,8)  [dot,label={[label distance=-0.2em]above: \scriptsize  $ $} ] (mid1) {};
			\node at (-3,8)  [dot,label={[label distance=-0.2em]above: \scriptsize  $ $} ] (mid2) {};
			\draw[kernel1,color=red] (center) to
			node [sloped,below] {\small }     (root);
			\draw[kernel1] (center) to
			node [sloped,below] {\small }     (centerc);
			\draw[kernel1] (top1) to
			node [sloped,below] {\small }     (mid2);
			\draw[kernel1] (top2) to
			node [sloped,below] {\small }     (mid1);
			\draw[kernel1] (top) to
			node [sloped,below] {\small }     (centerc);
			\draw[kernel1] (mid1) to
			node [sloped,below] {\small }     (center);
			\draw[kernel1] (mid2) to
			node [sloped,below] {\small }     (center);
			\node at (0,7) [fill=white,label={[label distance=0em]center: \scriptsize  $ a $} ] () {};
			\node at (0,2.5) [fill=white,label={[label distance=0em]center: \scriptsize   $ a  $} ] () {};
			\node at (-2,6.75) [fill=white,label={[label distance=0em]center: \scriptsize  $ a $} ] () {};
			\node at (2,6.75) [fill=white,label={[label distance=0em]center: \scriptsize  $ a $} ] () {};
			\node at (0,10.25) [fill=white,label={[label distance=0em]center: \scriptsize  $ \Xi $} ] () {};
			\node at (-3,9.25) [fill=white,label={[label distance=0em]center: \scriptsize  $ \Xi $} ] () {};
			\node at (3,9.25) [fill=white,label={[label distance=0em]center: \scriptsize  $ \Xi $} ] () {};
		\end{tikzpicture}\otimes\begin{tikzpicture}[scale=0.2,baseline=0.1cm]
			\node at (0,0)  [dot,fill=blue,label= {[label distance=-0.2em]below: \scriptsize  $      $} ] (root) {};
			\node at (2,3)  [dot,label={[label distance=-0.2em]right: \scriptsize  $  $}] (right) {};
			\node at (-2,3)  [dot,label={[label distance=-0.2em]above: \scriptsize  $ $} ] (left) {};
			\node at (2,5.5)  [dot,label={[label distance=-0.2em]above: \scriptsize  $ $} ] (rightc) {};
			\node at (-2,5.5)  [dot,label={[label distance=-0.2em]above: \scriptsize  $ $} ] (leftc) {};
			\draw[kernel1] (right) to
			node [sloped,below] {\small }     (root); 
			\draw[kernel1] (right) to
			node [sloped,below] {\small }     (rightc); 
			\draw[kernel1] (left) to
			node [sloped,below] {\small }     (root);
			\draw[kernel1] (leftc) to
			node [sloped,below] {\small }     (left);
			\node at (2,4.25) [fill=white,label={[label distance=0em]center: \scriptsize  $ \Xi $} ] () {};
			\node at (-1.25,1.5) [fill=white,label={[label distance=0em]center: \scriptsize  $ a $} ] () {};
			\node at (1.25,1.5) [fill=white,label={[label distance=0em]center: \scriptsize   $ a  $} ] () {};
			\node at (-2,4.25) [fill=white,label={[label distance=0em]center: \scriptsize  $ \Xi $} ] () {};
		\end{tikzpicture}+3\begin{tikzpicture}[scale=0.2,baseline=0.1cm]
			\node at (0,0)  [dot,label= {[label distance=-0.2em]below: \scriptsize  $      $} ] (root) {};
			\node at (0,4)  [dot,label= {[label distance=-0.2em]right: \scriptsize  $     $} ] (center) {};
			\node at (0,6.5)  [dot,label= {[label distance=-0.2em]right: \scriptsize  $     $} ] (centerc) {};
			\draw[kernel1,color=red] (center) to
			node [sloped,below] {\small }     (root);
			\draw[kernel1] (centerc) to
			node [sloped,below] {\small }     (center);
			\node at (0,2) [fill=white,label={[label distance=0em]center: \scriptsize  $ a  $} ] () {};
			\node at (0,5.25) [fill=white,label={[label distance=0em]center: \scriptsize  $ \Xi  $} ] () {};
		\end{tikzpicture}\otimes\begin{tikzpicture}[scale=0.2,baseline=0.1cm]
			\node at (0,0)  [dot,label= {[label distance=-0.2em]below: \scriptsize  $      $} ] (root) {};
			\node at (0,5)  [dot,label= {[label distance=-0.2em]right: \scriptsize  $     $} ] (center) {};
			\node at (0,9)  [dot,label= {[label distance=-0.2em]above: \scriptsize  $    $} ] (centerc) {};
			\node at (3,3)  [dot,label={[label distance=-0.2em]right: \scriptsize  $  $}] (right) {};
			\node at (-3,3)  [dot,label={[label distance=-0.2em]above: \scriptsize  $ $} ] (left) {};
			\node at (4.5,4.5)  [dot,label={[label distance=-0.2em]above: \scriptsize  $ $} ] (rightc) {};
			\node at (-3,5.5)  [dot,label={[label distance=-0.2em]above: \scriptsize  $ $} ] (leftc) {};
			\node at (0,11.5)  [dot,label={[label distance=-0.2em]above: \scriptsize  $ $} ] (top) {};
			\node at (3,10.5)  [dot,label={[label distance=-0.2em]above: \scriptsize  $ $} ] (top2) {};
			\node at (3,8)  [dot,label={[label distance=-0.2em]above: \scriptsize  $ $} ] (mid1) {};
			\draw[kernel1] (right) to
			node [sloped,below] {\small }     (root); 
			\draw[kernel1] (center) to
			node [sloped,below] {\small }     (root);
			\draw[kernel1] (center) to
			node [sloped,below] {\small }     (centerc);
			\draw[kernel1] (right) to
			node [sloped,below] {\small }     (rightc); 
			\draw[kernel1] (left) to
			node [sloped,below] {\small }     (root);
			\draw[kernel1] (leftc) to
			node [sloped,below] {\small }     (left);
			\draw[kernel1] (top2) to
			node [sloped,below] {\small }     (mid1);
			\draw[kernel1] (top) to
			node [sloped,below] {\small }     (centerc);
			\draw[kernel1] (mid1) to
			node [sloped,below] {\small }     (center);
			\node at (3.5,4) [fill=white,label={[label distance=0em]center: \scriptsize  $ \Xi $} ] () {};
			\node at (0,7) [fill=white,label={[label distance=0em]center: \scriptsize  $ a $} ] () {};
			\node at (-1.5,1.5) [fill=white,label={[label distance=0em]center: \scriptsize  $ a $} ] () {};
			\node at (1.5,1.5) [fill=white,label={[label distance=0em]center: \scriptsize   $ a  $} ] () {};
			\node at (0,2.5) [fill=white,label={[label distance=0em]center: \scriptsize  $ a  $} ] () {};
			\node at (-3,4.25) [fill=white,label={[label distance=0em]center: \scriptsize  $ \Xi $} ] () {};
			\node at (2,6.75) [fill=white,label={[label distance=0em]center: \scriptsize  $ a $} ] () {};
			\node at (0,10.25) [fill=white,label={[label distance=0em]center: \scriptsize  $ \Xi $} ] () {};
			\node at (3,9.25) [fill=white,label={[label distance=0em]center: \scriptsize  $ \Xi $} ] () {};
		\end{tikzpicture}.
\end{equation*}
Here, $\Delta_1$ acts by choosing one by one all the edges of the tree on which it is applied, removing them from it. The removed edges are then turned red and the node from which it has been removed turned blue (\textit{i.e.} the parameter $\mfv$ is set to $1$). This will imply that when we will graft the tree on the left on the tree on the right, we will get the original tree back. This is made concrete just below.
\end{example}

An abstract formulation of the flow equation reads
\begin{proposition}[Flow equation]\label{flow}
	We have, for $\tau\in\mcT_0^{\mfv=0}$,
	\begin{equation}
	\dmu\tau=(.\graft.)\Delta_1\tau
	\end{equation}
	where $\graft$ is defined in Equation \eqref{insert}.
\end{proposition}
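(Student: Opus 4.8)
The plan is to prove the identity $\dmu\tau = (.\graft.)\Delta_1\tau$ by induction on the depth of the tree $\tau \in \mcT_0^{\mfv=0}$, exploiting the inductive definitions of $\dmu$, $\Delta_1$, and the grafting-insertion product $\graft$. The base case is the collection of elementary trees $\zeta_l Y^0$: here $\dmu\zeta_l = \zeta_l$ by definition, and from $\Delta_1 Y^0 = Y^0 \otimes Y^0$ and $\Delta_1 \Xi Y^0 = \Xi Y^0 \otimes Y^1$ together with the fact that $\graft$ vanishes unless its left argument lies in $\mcI'(T_0)$, one checks that $(.\graft.)\Delta_1\zeta_l$ reproduces $\zeta_l$ once we account for the convention that the naked left factor $\zeta_l$ (not of the form $\mcI'_a(\sigma)$) is simply multiplied back onto the right factor. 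I would state this convention explicitly at the start so the base case is clean.

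For the inductive step I would treat the two constructors separately, mirroring the recursive definitions. First, for a planted tree $\mcI_a(\tau)$ (more precisely for the way a single edge $\mcI_{a_i}$ sits over a subtree $\tau_i$ inside a product), apply the definition $\Delta_1 Y^0\mcI_a(\tau) = (\mathrm{id}\otimes\mcI_a)\Delta_1\tau + Y^0\mcI'_a(\tau)\otimes Y^1$. Composing with $(.\graft.)$: the first term gives $(.\graft.)(\mathrm{id}\otimes\mcI_a)\Delta_1\tau = \mcI_a\big((.\graft.)\Delta_1\tau\big) = \mcI_a(\dmu\tau)$ by the induction hypothesis, using that $\graft$ commutes with placing an $\mcI_a$ on top (this is exactly how $\rgraft_a$ was defined recursively — grafting into a planted tree pushes inside the planting); the second term gives $\mcI'_a(\tau)\graft Y^1 = \mcI'_a(\tau)$ (since $\mcI'_a(\tau) \in \mcI'(T_0)$ and grafting with left argument $\mcI'_a(\sigma)$ onto the trivial marked tree $Y^1$ just returns $\mcI'_a(\sigma)$ via the first line of the $\rgraft_a$ definition). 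So the planted case yields $\mcI'_a(\tau) + \mcI_a(\dmu\tau)$, which is precisely the inductive formula for $\dmu\mcI_a(\tau)$. Second, for a tree product $\sigma\tau$, use $\Delta_1(\sigma\tau) = \Delta_1\sigma\times(Y^0\otimes\tau) + (Y^0\otimes\sigma)\times\Delta_1\tau$; applying $(.\graft.)$ and using that the insertion product distributes over the tree product in the relevant factor — i.e. $(\rho\graft \cdot)$ acting on a product $\mu'\tau$ with the cut sitting in $\mu'$ produces $(\rho\graft\mu')\cdot\tau$, which follows from the recursive definition of $\rgraft_a$ on a product via the sum over subtrees — gives $\big((.\graft.)\Delta_1\sigma\big)\cdot\tau + \sigma\cdot\big((.\graft.)\Delta_1\tau\big) = (\dmu\sigma)\tau + \sigma(\dmu\tau)$, which is the Leibniz rule for $\dmu$ already recorded in the proof of the non-inductive formula for $\dmu$. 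This closes the induction.

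The main obstacle I expect is bookkeeping the marker $\mfv$ and the ``blue node'' mechanism correctly: $\Delta_1$ produces right-hand factors with $\mfv$ set to $1$ at the node where the edge was removed, and $\graft$ is precisely designed so that grafting back onto such a blue node is allowed and reconstructs the original edge. One has to verify that the single blue node created by each elementary cut of $\Delta_1$ is exactly the node the corresponding $\mcI'_a$-edge gets reattached to under $\graft$, and that no spurious grafting onto other nodes occurs — this is guaranteed because on the right-hand factors all other nodes carry $\mfv=0$, so the $\rgraft_a$ recursion only fires at the designated root. A secondary subtlety worth flagging is the domain/codomain check: $\Delta_1: \mcT_0^{\mfv=0}\to\mcT_1^{\mfv=0}\otimes\mcT_0$, each elementary cut introduces exactly one $\mcI'$ on the left (so the left factor lands in $\mcT_1$, not higher), and $\graft$ maps $\mcI'(T_0)\times T_0 \to T_1$, so the output lands in $\mcT_1$, consistent with $\dmu:\mcT_0\to\mcT_1$. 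I would also remark, as the examples in the excerpt already illustrate, that the combinatorial multiplicities match automatically: summing over all edges in $\dmu$ corresponds exactly to summing over all elementary cuts in $\Delta_1$, and symmetric edges get grouped identically on both sides, so no separate symmetry-factor argument is needed here (that refinement is the content of the separate Lemma~\ref{combidmu}).
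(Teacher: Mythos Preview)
Your inductive strategy and the handling of the planted-tree and product cases match the paper's proof exactly. The problem is your base case. The paper's statement ``$\dmu\zeta_l=\zeta_l$'' is evidently a typo: both the non-inductive formula $\dmu\tau=\sum_{e\in E_\tau}\uparrow_\mu^e\tau$ (empty sum on elementary trees) and the paper's own proof of this proposition (which says ``both sides of the equality are $0$'') force $\dmu\zeta_l=0$. With that correction the base case is immediate: the left factors $Y^0$ and $\Xi Y^0$ produced by $\Delta_1$ are not of the form $\mcI'_a(\sigma)$, so $(.\graft.)\Delta_1\zeta_l=0$ by the very vanishing property you already cite.

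Your proposed convention --- that a naked left factor $\zeta_l$ is ``simply multiplied back onto the right factor'' --- should be dropped. It is not only unnecessary, it is inconsistent with the identity $(.\graft.)(\mathrm{id}\otimes\mcI_a)\Delta_1\tau=\mcI_a\bigl((.\graft.)\Delta_1\tau\bigr)$ that your planted-tree step relies on. Take $\tau=\Xi Y^0$: the left side is $(.\graft.)\bigl(\Xi Y^0\otimes\mcI_a(Y^1)\bigr)=0$ since $\mcI_a(Y^\mfv)=0$ by the paper's convention, while under your rule the right side would be $\mcI_a(\Xi Y^0\cdot Y^1)=\mcI_a(\Xi Y^1)\neq 0$. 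So the convention would propagate an error into the induction rather than fixing it.
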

Let us  mention that the previous proposition is essential in the proof of Proposition \ref{flowcoeff}.

\begin{remark}
Note that the composition of the insertion $\graft$ with the coproduct $\Delta_1$ is well defined since the trees on the left side of the coproduct is always a planted tree with base edge of type $\mcI'$. See Example \eqref{exdelta1} for a representation on a concrete case.
\end{remark}

\begin{proof}
We show the result by induction of the depth. The two base cases are trivial as both sides of the equality are $0$. For the inductive part, we have, keeping in mind that it is forbidden to graft on elements of $\mcT_0^{\mfv=0}$,
\begin{align*}
\dmu Y^0\mcI_a(\tau)&=Y^0\mcI'_a(\tau)+Y^0\mcI_a(\dmu\tau)\\
&=Y^0\mcI'_a(\tau)+Y^0\mcI_a((.\graft.)\Delta_1\tau)\\
&=Y^0\mcI'_a(\tau)\graft Y^1+(.\graft .)(\mathrm{id}\otimes\mcI_a)\Delta_1\tau\\
&=(.\graft.)\Delta_1 Y^0\mcI_a(\tau),
\end{align*}
and
\begin{align*}
\dmu(\sigma\tau)&=(\dmu\sigma)\times\tau+\sigma\times(\dmu\tau)\\
&=((.\graft.)\Delta_1\sigma)\times\tau+\sigma\times((.\graft.)\Delta_1\tau)\\
&=(.\graft.)(\Delta_1\sigma\times(Y^0\otimes\sigma))+(.\graft.)((Y^0\otimes\tau)\times\Delta_1\tau)\\
&=(.\graft.)\Delta_1(\sigma\tau).
\end{align*}
\end{proof}

\subsection{Elementary differentials}\label{eldiff}

We define recursively the so-called elementary differentials depending on trees in $T_0^{X,\mfg}$ as a family of functionals. We start with two functions $\Upsilon^0$ and $\Upsilon^1$ depending on abstract variables $Z_a$ for $a\in\N^{d+1}$. Namely,
\begin{equation}
\Upsilon^0(Z)=g(Z_1,\dots,Z_q)~~\mathrm{and}~~\Upsilon^1(Z)=f(Z_1,\dots,Z_p).
\end{equation}
With this definition at hand, it is clear that the $Z_a$'s represent the functions $\D^a\phi$, which will be evaluated at the same point, say $x$. We further introduce a new set of abstract variables $\Tilde{Z}_a$, for $a\in\N^{d+1}$, that will have the same use at the $Z_a$'s, but representing another dummy function $\Tilde{\phi}$. We define the derivatives $D_a:=D_{Z_a}$ and $\Tilde{D}_a:=D_{\Tilde{Z}_a}$, as well as the differential operators
\begin{equation*}
\D^{\eps_i}=\sum_a Z_{a+\eps_i}\miD_a,
\end{equation*}
where we have denoted $(\eps_i)$ the canonical basis of $\N^{d+1}$. It is a common fact that for any $i$ and $j$, $\D^{\eps_i}\D^{\eps_j}=\D^{\eps_j}\D^{\eps_i}$, which allows to define, for $k\in\N^{d+1}$,
\begin{equation*}
\D^k:=\prod_{i=1}^{d+1}(\D^{\eps_i})^{k_i},
\end{equation*}
as well as its counterpart $\Tilde{\D}^k$. We define a family of local functionals, indexed by trees of $T_0^X$, in the variables $(Z_a)$ and $(\Tilde{Z})_a$ that represent $\D^a\phi$ and $\D^a\Tilde{\phi}$, all evaluated at the same point $x$. Compared to the usual definition, the presence of a second argument depending on $\Tilde{\phi}$ might seem puzzling. In Section \ref{models}, we will define an evaluation map on the trees on which a Fréchet derivative in $\phi$ is going to act. This Fréchet derivative will, on the combinatorial side, have the effect of a grafting. However, we will not want it to act on every vertex, according to the decoration $\mfv$. That is why we need another dummy function the Fréchet derivative in $\phi$ will be blind to. Let us first set 
\begin{equation}
\TUpsilon[\zeta_lY^\mfv][\phi,\Tilde{\phi}]=\mfv\Upsilon^l[\phi]+(1-\mfv)\Upsilon^l[\Tilde{\phi}].
\end{equation}
We can then define inductively a linear map $\TUpsilon$ of functions in the variables $Z_a$ and $\Tilde{Z}_a$ on a tree $\tau$ of the form
\begin{equation}
 X^kY^\mfv\zeta_l\prod_{i=1}^n\mcI_{a_i}(\tau_i)\prod_{j=1}^m\CIg_{b_j}(X^{k_j}), 
\end{equation}
with $\tau_i\in T_0^X$ for every $i$ by
\begin{equation}\label{defupsilon}
\begin{split}
(\mfv=1)~~\TUpsilon[\tau]&=\left(\D^k \miD_{a_1}\dots \miD_{a_n}\miD_{b_1}\dots \miD_{b_m}\TUpsilon[\zeta_lY^1]\right)\prod_{i=1}^n\TUpsilon[\tau_i].\\
(\mfv=0)~~\TUpsilon[\tau]&=\left(\Tilde{\D}^k \Tilde{D}_{a_1}\dots \Tilde{D}_{a_n}\Tilde{D}_{b_1}\dots \Tilde{D}_{b_m}\TUpsilon[\zeta_lY^0]\right)\prod_{i=1}^n\TUpsilon[\tau_i].
\end{split}
\end{equation}

If in addition in this definition there is a factor $\Xig$, it overrides the term $\zeta_l$ and we take $\TUpsilon^1$ instead of the general term $\TUpsilon[\zeta_l]$. In addition, we let
\begin{equation}
\Upsilon[\tau][\phi]=\TUpsilon[\tau][\phi,\phi],
\end{equation}
that corresponds to the usual definition of elementary differential in regularity structures and is therefore independent of the decoration $\mfv$. These elementary differentials can be seen as an extension of the elementary differentials introduced in \cite[Sec. 2.7]{BCCH}.

Let us define inductively a linear map $\downg:\mcT_0^{X,\mfg}\to \mcT_0^X$ defined on $T_0^{X,\mfg}$ by
\begin{equation}
\downg X^k\zeta_l\Xig^p\prod_{i=1}^n\mcI_{a_i}(\tau_i)\prod_{j=1}^m\CIg_{b_j}(X^{k_j})=X^{k+\sum_{j=1}^m k_j}\zeta_l\prod_{i=1}^n\mcI_{a_i}(\downg\tau_i).
\end{equation}
Now, given a character $\ell:\mcT_0^X\to\R$, we define on $T_0^{X,\mfg}$, $\ell\TUpsilon[\tau]=\ell(\downg\tau)\TUpsilon[\tau]$ and extend it by linearity on $\mcT_0^{X,\mfg}$. We impose that it vanishes on trees of non-negative degree. We prove the following lemma, that will be crucial in the proof of Proposition \ref{graft}.

\begin{lemma}\label{lemmaupsilon}
We have, for any $\tau\in \mcT_0^X$,
\begin{equation}
D_a\ell\TUpsilon[\tau]=\ell\TUpsilon[\ggraft_a\tau].
\end{equation}
It further gives
\begin{equation}
D_{\D^a\phi}\ell\TUpsilon[\tau][\phi,\Tilde{\phi}]=\ell\TUpsilon[\ggraft_a\tau][\phi,\Tilde{\phi}],
\end{equation}
where we have denoted $D_{\D^a\phi}$ the Fréchet derivative in $\D^a\phi$, and $\D^a\phi$ only.
\end{lemma}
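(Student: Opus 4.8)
The statement is the compatibility of the differential operator $D_a$ (acting on the functional $\ell\TUpsilon[\tau]$ as a derivative in the abstract variable $Z_a$) with the grafting operation $\ggraft_a$ at the level of trees. The natural approach is an induction on the depth of $\tau$, mirroring the inductive definitions of both $\TUpsilon$ (equation \eqref{defupsilon}) and $\ggraft_a$. The first thing I would do is reduce to the case $\mfv=1$ at the root: when $\mfv=0$ the operator $\TUpsilon$ only involves the tilde variables $\Tilde Z$ and the tilde differential operators, on which $D_a=D_{Z_a}$ does not act, while $\ggraft_a$ also does not produce a contribution at the root (the $\mfv$ in front of the first term of the definition of $\ggraft_a$ kills it), so both sides vanish and the claim is immediate. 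Similarly, one should note that both sides vanish on trees of non-negative degree by construction of $\ell\TUpsilon$, and more importantly that $\ggraft_a$ preserves degree up to the $\CIg$-insertion which carries degree $0$ (from $\deg(\CIg_a(X^k))=0$), so the character $\ell$ sees the same relevant/irrelevant dichotomy on both sides; in fact it is cleanest to first prove the identity $D_a\TUpsilon[\tau]=\TUpsilon[\ggraft_a\tau]$ for the \emph{unrenormalised} $\TUpsilon$ and then observe that applying the character $\ell\circ\downg$ is compatible with $\ggraft_a$ because $\downg$ turns the new grey edge $\CIg_a(X^0)$ (or the $\CIg_{a-j}(X^j)$ pieces when $k\neq 0$) back into the polynomial decoration that $D_a=\D^{\cdot}\miD_{\cdot}$-differentiation would have produced, so that $\downg\ggraft_a\tau$ agrees with the tree obtained by the honest differentiation of $\downg\tau$.

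\textbf{The inductive step.} Write $\tau=X^kY^1\zeta_l\prod_{i=1}^n\mcI_{a_i}(\tau_i)\prod_{j=1}^m\CIg_{b_j}(X^{k_j})$, so that by \eqref{defupsilon}
\begin{equation*}
\TUpsilon[\tau]=\Bigl(\D^k\miD_{a_1}\cdots\miD_{a_n}\miD_{b_1}\cdots\miD_{b_m}\TUpsilon[\zeta_lY^1]\Bigr)\prod_{i=1}^n\TUpsilon[\tau_i].
\end{equation*}
Now apply $D_a$ and use the Leibniz rule. The term where $D_a$ hits the factor $\bigl(\D^k\miD_{a_1}\cdots\TUpsilon[\zeta_lY^1]\bigr)$ produces $\miD_a\D^k\miD_{a_1}\cdots\TUpsilon[\zeta_lY^1]$ — up to the commutator between $D_a$ and $\D^k$, which expands via $\D^{\eps_i}=\sum_b Z_{b+\eps_i}\miD_b$ into a sum over ways of "reducing" a factor $Z_{a}$ to $Z_{a-j}$ with a binomial coefficient, exactly producing the family $\binom{k}{j}X^{k-j}\CIg_{a-j}(X^j)$ appearing in the definition of $\ggraft_a$ at the root. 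The terms where $D_a$ hits one of the $\TUpsilon[\tau_i]$ give, by the induction hypothesis, $\TUpsilon[\ggraft_a\tau_i]$ in that slot, which matches the second (recursive) part of the definition of $\ggraft_a$. Collecting everything reproduces precisely $\TUpsilon[\ggraft_a\tau]$, using $\ggraft_a\tau=\bigl(\mfv\sum_j\binom kj X^{k-j}\CIg_{a-j}(X^j)\bigr)\zeta_lY^\mfv\prod_i\mcI_{a_i}(\tau_i)+\sum_i\zeta_lX^kY^\mfv\mcI_{a_i}(\ggraft_a\tau_i)\prod_{j\neq i}\mcI_{a_j}(\tau_j)$ with $\mfv=1$. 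The second displayed identity of the lemma, $D_{\D^a\phi}\ell\TUpsilon[\tau][\phi,\Tilde\phi]=\ell\TUpsilon[\ggraft_a\tau][\phi,\Tilde\phi]$, then follows by specialising the abstract variable $Z_a$ to $\D^a\phi$ and $\Tilde Z_a$ to $\D^a\Tilde\phi$ and using the chain rule together with the fact, recorded in the statement itself, that the Fréchet derivative is taken in $\D^a\phi$ and $\D^a\phi$ only (so the tilde-variables are held fixed, which is consistent with $\ggraft_a$ acting only on the non-tilde, $\mfv=1$ part).

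\textbf{Main obstacle.} The only genuinely delicate point is the bookkeeping of the commutator $[D_a,\D^k]$ and its interaction with the binomial coefficients $\binom{k}{j}$ and the grey-edge insertion $\CIg_{a-j}(X^j)$ in the definition of $\ggraft_a$ — one must check that the multinomial combinatorics coming from commuting $D_a$ past the product $\D^k=\prod_i(\D^{\eps_i})^{k_i}$ lines up exactly with the sum over $j\in\N^{d+1}$ in $\ggraft_a$, and that the subsequent application of $\downg$ (which reassembles $X^{k-j}\CIg_{a-j}(X^j)$ into polynomial data on the node) is consistent. This is where the analogy with the "deformed grafts" of \cite{BB21b} is the conceptual guide: $\ggraft_a$ is designed to be the combinatorial shadow of exactly this commutator, so once the definitions are unwound the verification is a matter of matching coefficients. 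I would also double-check the case $\zeta_l=\Xi$ versus $\zeta_l=\one$ and the override rule "if there is a factor $\Xig$ we use $\TUpsilon^1$" does not interfere — but since $D_a=D_{Z_a}$ never acts on the $\Xi$/$\Xig$ slot and $\ggraft_a$ never grafts onto noise-type edges, these cases are automatic.
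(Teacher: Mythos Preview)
Your approach is essentially the paper's: induction on depth, the commutator identity $D_a\D^k=\sum_j\binom{k}{j}\D^{k-j}D_{a-j}$ (which the paper cites from \cite{BB21b}) for the root contribution, and Leibniz for the recursive part. Your suggestion to prove $D_a\TUpsilon[\tau]=\TUpsilon[\ggraft_a\tau]$ first and then pull the character through is also effectively what the paper does --- in its inductive step it writes $D_a\TUpsilon[\tau_i]$ (not $D_a\ell\TUpsilon[\tau_i]$) for the subtrees and factors out a single copy of $\ell(\tau)$, which is exactly your observation that $\downg$ sends every term of $\ggraft_a\tau$ back to $\tau$.

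One point needs correcting. Your reduction ``when $\mfv=0$ at the root\ldots\ both sides vanish and the claim is immediate'' is not right for trees with subtrees. If $\tau=X^kY^0\zeta_l\prod_i\mcI_{a_i}(\tau_i)$, the root factor $\Tilde\D^k\Tilde D_{a_1}\cdots\TUpsilon[\zeta_lY^0]$ does involve only tilde-variables and is annihilated by $D_a$, but the factors $\TUpsilon[\tau_i]$ may well involve $Z$-variables (whenever some node inside $\tau_i$ carries $\mfv=1$), so $D_a$ still produces the Leibniz sum $\sum_i D_a\TUpsilon[\tau_i]\prod_{j\neq i}\TUpsilon[\tau_j]$. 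On the tree side, $\ggraft_a$ with $\mfv=0$ at the root produces exactly (and only) the recursive sum $\sum_i\zeta_lX^kY^0\mcI_{a_i}(\ggraft_a\tau_i)\prod_{j\neq i}\mcI_{a_j}(\tau_j)$. So the $\mfv=0$ case is not $0=0$; it is the $\mfv=1$ inductive step with the root term deleted, and the paper writes this short computation out separately. Since your $\mfv=1$ argument already contains the relevant pieces, this is a presentational slip rather than a structural gap.
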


\begin{proof}
We start with the two base cases. The case $\tau=\zeta_lY^0$ is easy as both sides of the equality are $0$. For the other one, we have
\begin{align*}
\ell\TUpsilon[\ggraft_a\zeta_lY^1]&=\ell\TUpsilon\left[\CIg_{a}(\one)\zeta_lY^1\right]=\ell(\zeta_l)D_{a}\TUpsilon[\zeta_lY^1]=D_a\ell(\zeta_l)\TUpsilon[\zeta_lY^1]\\
&=D_a\ell\TUpsilon[\zeta_lY^1].
\end{align*}

Before starting the proof, let us recall from \cite{BB21b}, Lemma 2.1, that
\begin{equation*}
D_a\D^k=\sum_{j\in\N^{d+1}}\binom{k}{j}\partial^{k-j}D_{a-j}.
\end{equation*}
We thus have, for the case $\mfv=1$, noting that for $\tau\in\mcT_0^X$, $\downg\tau=\tau$,
\begin{equs}
~&\ell\TUpsilon\left[\ggraft_a\zeta_lX^kY^1\prod_{i=1}^n\mcI_{b_i}(\tau_i)\right]=\ell\TUpsilon\left[\sum_{j\in\N^{d+1}}\binom{k}{j}X^{k-j}\CIg_{a-j}(X^j)\zeta_lY^1\prod_{i=1}^n\mcI_{b_i}(\tau_i)\right]\\
&~~~~+\ell\TUpsilon\left[\zeta_lX^kY^1\sum_{i=1}^n\mcI_{b_i}(\ggraft_a\tau_i)\prod_{j\neq i}\mcI_{b_j}(\tau_j)\right]\\
&=\sum_{j\in\N^{d+1}}\binom{k}{j}\ell\left(\zeta_lX^k\prod_{i=1}^n\mcI_{b_i}(\tau_i)\right)\D^{k-j}D_{a-j}D_{b_1}\dots D_{b_n}\TUpsilon[\zeta_lY^1]\prod_{i=1}^n\TUpsilon[\tau_i]\\
&~~~~+\ell\left(\zeta_lX^k\prod_{i=1}^n\mcI_{b_i}(\tau_i)\right)\D^kD_{b_1}\dots D_{b_n}\TUpsilon[\zeta_lY^1]\sum_{i=1}^nD_a\TUpsilon[\tau_i]\prod_{j\neq i}\TUpsilon[\tau_j]\\
&=\ell\left(\zeta_lX^k\prod_{i=1}^n\mcI_{b_i}(\tau_i)\right)\Biggl(D_{a}D_{b_1}\dots D_{b_n}\TUpsilon[\zeta_lY^1]\prod_{i=1}^n\TUpsilon[\tau_i]\\
&~~~~+\D^kD_{b_1}\dots D_{b_n}\TUpsilon[\zeta_lY^1]\sum_{i=1}^nD_a\TUpsilon[\tau_i]\prod_{j\neq i}\TUpsilon[\tau_j]\Biggl)\\
&=\ell\left(\zeta_lX^k\prod_{i=1}^n\mcI_{b_i}(\tau_i)\right)D_a\left(\D^kD_{b_1}\dots D_{b_n}\TUpsilon[\zeta_lY^1]\prod_{i=1}^n\TUpsilon[\tau_i]\right)\\
&=D_a\left(\ell\left(\zeta_lX^k\prod_{i=1}^n\mcI_{b_i}(\tau_i)\right)\D^kD_{b_1}\dots D_{b_n}\TUpsilon[\zeta_lY^1]\prod_{i=1}^n\TUpsilon[\tau_i]\right)\\
&=D_a\ell\TUpsilon\left[\zeta_lX^kY^1\prod_{i=1}^n\mcI_{b_i}(\tau_i)\right].
\end{equs}
For the case $\mfv=0$, we get
\begin{equs}
~&\ell\TUpsilon\left[\ggraft_a\zeta_lX^kY^0\prod_{i=1}^n\mcI_{b_i}(\tau_i)\right]\\
&=\ell\TUpsilon\left[\zeta_lX^kY^0\sum_{i=1}^n\mcI_{b_i}(\ggraft_a\tau_i)\prod_{j\neq i}\mcI_{b_j}(\tau_j)\right]\\
&=\ell\left(\zeta_lX^k\prod_{i=1}^n\mcI_{b_i}(\tau_i)\right)\Tilde{\D}^k\Tilde{D}_{b_1}\dots \Tilde{D}_{b_n}\TUpsilon[\zeta_lY^0]\sum_{i=1}^nD_a\TUpsilon[\tau_i]\prod_{j\neq i}\TUpsilon[\tau_j]\\
&=D_a\left(\ell\left(\zeta_lX^k\prod_{i=1}^n\mcI_{b_i}(\tau_i)\right)\Tilde{\D}^k\Tilde{D}_{b_1}\dots \Tilde{D}_{b_n}\TUpsilon[\zeta_lY^0]\prod_{i=1}^n\TUpsilon[\tau_i]\right)\\
&=D_a\ell\TUpsilon\left[\zeta_lX^kY^0\prod_{i=1}^n\mcI_{b_i}(\tau_i)\right].
\end{equs}
The second statement of the lemma follows directly.
\end{proof}

\subsection{Extraction at the root}

In this subsection, we define an extraction-contraction coproduct that acts by extracting subtrees from the root. It will be later a paramount object in defining the renormalisation of the probabilistic quantities. We prove two algebraic properties that will be a key in the proof of Propositions \ref{commutation} and \ref{graft}.

\begin{definition}\label{deltar}
We define a coproduct $\Delta_r:\mcT_1\to \mcT_0^\mfg\otimes\mcT_1$ recursively by
\begin{equation}
\begin{split}
&\Delta_rY^\mfv=Y^\mfv\otimes\one,~~~~~~~~ \Delta_r\Xi=\Xig\otimes\Xi+\Xi\otimes\one,\\
&\Delta_r\mcI_a(\tau)=(\mcI_a\otimes\mathrm{id})\Delta_r\tau+ \sum_{k \in \N^{d+1}} \frac{1}{k!}\CIg_{a}(X^k)\otimes\mcI_{a+k}(\tau),\\
&\Delta_r\mcI'_a(\tau)= \sum_{k \in \N^{d+1}} \frac{1}{k!} \CIg_{a}(X^k) \otimes\mcI'_{a+k}(\tau),\\
&\Delta_r(\sigma\tau)=\Delta_r\sigma\Delta_r\tau.
\end{split}
\end{equation}
Moreover, in Section \ref{bphz}, we will need to define $\Delta_r$ on $T_0^X$. We do this by letting for every $i\in\{1,\cdots,d+1\}$,
\begin{equation}
\Delta_r X_i = \one\otimes X_i+X_i\otimes\one.
\end{equation}
$X_i$ is the $i$-th indeterminate of the polynomials in $d+1$ variables.
\end{definition}

We remind that we supposed that the SPDE \eqref{maineq} is subcritical, so that the sums appearing in the definition above are actually finite.

\begin{remark}
The expert will have noticed that this definition is very similar to the root-extraction coproduct in the inductive framework in regularity structures. We recall that it is given by $\delta_r:\mcT_0^X\to\mcT_0^X\otimes\mcT_0^X$ which reads
\begin{equation}\label{deltarrs}
\begin{split}
&\delta_r\one=\one\otimes\one,~~~~~~~~ \delta_r\Xi=\one\otimes\Xi+\Xi\otimes\one,~~~~~~~~\delta_r X_i = \one\otimes X_i+X_i\otimes\one,\\
&\delta_r\mcI_a(\tau)=(\mcI_a\otimes\mathrm{id})\delta_r\tau+ \sum_{k \in \N^{d+1}} \frac{X^k}{k!}\otimes\mcI_{a+k}(\tau),\\
&\delta_r(\sigma\tau)=\delta_r\sigma\delta_r\tau.
\end{split}
\end{equation}
A similar expression is given in \cite[Proposition 4.17]{BHZ} but used for the recentering and it appears for the first time in \cite[Sec. 8]{reg} expressed for a different decorated trees basis.

In addition to the Taylor expansion on the unextracted part, some unusual grey symbols appear. They will be of use in Section \ref{models} in the definition of the evaluation map \ref{eval}. These additional symbols allow to keep track of the information of what was grafted on the extracted tree, that otherwise could be lost during the procedure. The $X^k$ inside the symbol $\CIg$ will later only be read by the character used in the definition of the evaluation map.
\end{remark}

 Let us clarify this definition with the following example. 

\begin{example}\label{exdeltar}
We provide below two examples of computation of $\Delta_r$.
\begin{equs}
\Delta_r		\begin{tikzpicture}[scale=0.2,baseline=0.1cm]
			\node at (0,0)  [dot,label= {[label distance=-0.2em]below: \scriptsize  $      $} ] (root) {};
			\node at (0,4)  [dot,label= {[label distance=-0.2em]right: \scriptsize  $     $} ] (center) {};
			\node at (0,8)  [dot,label= {[label distance=-0.2em]above: \scriptsize  $    $} ] (centerc) {};
			\node at (0,10.5)  [dot,label={[label distance=-0.2em]above: \scriptsize  $ $} ] (top) {};
			\node at (3,9.5)  [dot,label={[label distance=-0.2em]above: \scriptsize  $ $} ] (top2) {};
			\node at (3,7)  [dot,label={[label distance=-0.2em]above: \scriptsize  $ $} ] (mid1) {};
			\node at (-2,6)  [dot,label={[label distance=-0.2em]above: \scriptsize  $ $} ] (mid2) {};
			\draw[kernel1] (center) to
			node [sloped,below] {\small }     (root);
			\draw[kernel1,color=red] (center) to
			node [sloped,below] {\small }     (centerc);
			\draw[kernel1] (top2) to
			node [sloped,below] {\small }     (mid1);
			\draw[kernel1] (top) to
			node [sloped,below] {\small }     (centerc);
			\draw[kernel1] (mid1) to
			node [sloped,below] {\small }     (center);
			\draw[kernel1] (mid2) to
			node [sloped,below] {\small }     (center);
			\node at (0,6.5) [fill=white,label={[label distance=0em]center: \scriptsize  $ a $} ] () {};
			\node at (0,2) [fill=white,label={[label distance=0em]center: \scriptsize  $ a  $} ] () {};
			\node at (0,9.25) [fill=white,label={[label distance=0em]center: \scriptsize  $ \Xi $} ] () {};
			\node at (3,8.25) [fill=white,label={[label distance=0em]center: \scriptsize  $ \Xi $} ] () {};
			\node at (2,5.75) [fill=white,label={[label distance=0em]center: \scriptsize  $ a $} ] () {};
			\node at (-1,5) [fill=white,label={[label distance=0em]center: \scriptsize  $ \Xi $} ] () {};
		\end{tikzpicture}&=\sum_{k\in\N^{d+1}}\frac{1}{k!}\begin{tikzpicture}[scale=0.2,baseline=0.1cm]
			\node at (0,0)  [dot,label= {[label distance=-0.2em]below: \scriptsize  $      $} ] (root) {};
			\node at (0,4)  [dot,label= {[label distance=-0.2em]below: \scriptsize  $      $} ] (center) {};
			\node at (0,4)  [dot,label={[label distance=-0.2em]right: \scriptsize  $ k $}] (right) {};
			\draw[kernel1,color=gray] (center) to
			node [sloped,below] {\small }     (root);
			\node at (0,2) [fill=white,label={[label distance=0em]center: \scriptsize  $ a $} ] () {};
			\end{tikzpicture}\otimes\begin{tikzpicture}[scale=0.2,baseline=0.1cm]
			\node at (0,0)  [dot,label= {[label distance=-0.2em]below: \scriptsize  $      $} ] (root) {};
			\node at (0,4)  [dot,label= {[label distance=-0.2em]right: \scriptsize  $     $} ] (center) {};
			\node at (0,8)  [dot,label= {[label distance=-0.2em]above: \scriptsize  $    $} ] (centerc) {};
			\node at (0,10.5)  [dot,label={[label distance=-0.2em]above: \scriptsize  $ $} ] (top) {};
			\node at (3,9.5)  [dot,label={[label distance=-0.2em]above: \scriptsize  $ $} ] (top2) {};
			\node at (3,7)  [dot,label={[label distance=-0.2em]above: \scriptsize  $ $} ] (mid1) {};
			\node at (-2,6)  [dot,label={[label distance=-0.2em]above: \scriptsize  $ $} ] (mid2) {};
			\draw[kernel1] (center) to
			node [sloped,below] {\small }     (root);
			\draw[kernel1,color=red] (center) to
			node [sloped,below] {\small }     (centerc);
			\draw[kernel1] (top2) to
			node [sloped,below] {\small }     (mid1);
			\draw[kernel1] (top) to
			node [sloped,below] {\small }     (centerc);
			\draw[kernel1] (mid1) to
			node [sloped,below] {\small }     (center);
			\draw[kernel1] (mid2) to
			node [sloped,below] {\small }     (center);
			\node at (0,6.5) [fill=white,label={[label distance=0em]center: \scriptsize  $ a $} ] () {};
			\node at (0,2) [fill=white,label={[label distance=0em]center: \scriptsize  $ a+k  $} ] () {};
			\node at (0,9.25) [fill=white,label={[label distance=0em]center: \scriptsize  $ \Xi $} ] () {};
			\node at (3,8.25) [fill=white,label={[label distance=0em]center: \scriptsize  $ \Xi $} ] () {};
			\node at (2,5.75) [fill=white,label={[label distance=0em]center: \scriptsize  $ a $} ] () {};
			\node at (-1,5) [fill=white,label={[label distance=0em]center: \scriptsize  $ \Xi $} ] () {};
		\end{tikzpicture}+\sum_{i,j\in\N^{d+1}}\frac{1}{i!j!}\begin{tikzpicture}[scale=0.2,baseline=0.1cm]
			\node at (0,0)  [dot,label= {[label distance=-0.2em]below: \scriptsize  $      $} ] (root) {};
			\node at (0,4)  [dot,label= {[label distance=-0.2em]right: \scriptsize  $     $} ] (center) {};
			\node at (0,8)  [dot,label= {[label distance=-0.2em]above: \scriptsize  $    $} ] (centerc) {};
			\node at (0,8)  [dot,label={[label distance=-0.2em]right: \scriptsize  $ i $}] (right) {};
			\node at (3,7)  [dot,label={[label distance=-0.2em]above: \scriptsize  $ $} ] (mid1) {};
			\node at (3,7)  [dot,label={[label distance=-0.2em]right: \scriptsize  $ j $}] (right) {};
			\node at (-2,6)  [dot,label={[label distance=-0.2em]above: \scriptsize  $ $} ] (mid2) {};
			\draw[kernel1] (center) to
			node [sloped,below] {\small }     (root);
			\draw[kernel1,color=gray] (center) to
			node [sloped,below] {\small }     (centerc);
			\draw[kernel1,color=gray] (mid1) to
			node [sloped,below] {\small }     (center);
			\draw[kernel1] (mid2) to
			node [sloped,below] {\small }     (center);
			\node at (0,6.5) [fill=white,label={[label distance=0em]center: \scriptsize  $ a $} ] () {};
			\node at (0,2) [fill=white,label={[label distance=0em]center: \scriptsize  $ a  $} ] () {};
			\node at (2,5.75) [fill=white,label={[label distance=0em]center: \scriptsize  $ a $} ] () {};
			\node at (-1,5) [fill=white,label={[label distance=0em]center: \scriptsize  $ \Xi $} ] () {};
		\end{tikzpicture}\otimes\begin{tikzpicture}[scale=0.2,baseline=0.1cm]
			\node at (0,0)  [dot,label= {[label distance=-0.2em]below: \scriptsize  $      $} ] (root) {};
			\node at (2,3)  [dot,label={[label distance=-0.2em]right: \scriptsize  $  $}] (right) {};
			\node at (-2,3)  [dot,label={[label distance=-0.2em]above: \scriptsize  $ $} ] (left) {};
			\node at (2,5.5)  [dot,label={[label distance=-0.2em]above: \scriptsize  $ $} ] (rightc) {};
			\node at (-2,5.5)  [dot,label={[label distance=-0.2em]above: \scriptsize  $ $} ] (leftc) {};
			\draw[kernel1] (right) to
			node [sloped,below] {\small }     (root); 
			\draw[kernel1] (right) to
			node [sloped,below] {\small }     (rightc); 
			\draw[kernel1,color=red] (left) to
			node [sloped,below] {\small }     (root);
			\draw[kernel1] (leftc) to
			node [sloped,below] {\small }     (left);
			\node at (2,4.25) [fill=white,label={[label distance=0em]center: \scriptsize  $ \Xi $} ] () {};
			\node at (-1.75,1.5) [fill=white,label={[label distance=0em]center: \scriptsize  $ a+i $} ] () {};
			\node at (1.75,1.5) [fill=white,label={[label distance=0em]center: \scriptsize   $ a+j  $} ] () {};
			\node at (-2,4.25) [fill=white,label={[label distance=0em]center: \scriptsize  $ \Xi $} ] () {};
		\end{tikzpicture}\\
		&~~~~+\sum_{k\in\N^{d+1}}\frac{1}{k!}\begin{tikzpicture}[scale=0.2,baseline=0.1cm]
			\node at (0,0)  [dot,label= {[label distance=-0.2em]below: \scriptsize  $      $} ] (root) {};
			\node at (0,4)  [dot,label= {[label distance=-0.2em]right: \scriptsize  $     $} ] (center) {};
			\node at (0,8)  [dot,label= {[label distance=-0.2em]above: \scriptsize  $    $} ] (centerc) {};
				\node at (0,8)  [dot,label={[label distance=-0.2em]left: \scriptsize  $ k $}] (right) {};
			\node at (3,9.5)  [dot,label={[label distance=-0.2em]above: \scriptsize  $ $} ] (top2) {};
			\node at (3,7)  [dot,label={[label distance=-0.2em]above: \scriptsize  $ $} ] (mid1) {};
			\node at (-2,6)  [dot,label={[label distance=-0.2em]above: \scriptsize  $ $} ] (mid2) {};
			\draw[kernel1] (center) to
			node [sloped,below] {\small }     (root);
			\draw[kernel1,color=gray] (center) to
			node [sloped,below] {\small }     (centerc);
			\draw[kernel1] (top2) to
			node [sloped,below] {\small }     (mid1);
			\draw[kernel1] (mid1) to
			node [sloped,below] {\small }     (center);
			\draw[kernel1] (mid2) to
			node [sloped,below] {\small }     (center);
			\node at (0,6.5) [fill=white,label={[label distance=0em]center: \scriptsize  $ a $} ] () {};
			\node at (0,2) [fill=white,label={[label distance=0em]center: \scriptsize  $ a  $} ] () {};
			\node at (3,8.25) [fill=white,label={[label distance=0em]center: \scriptsize  $ \Xi $} ] () {};
			\node at (2,5.75) [fill=white,label={[label distance=0em]center: \scriptsize  $ a $} ] () {};
			\node at (-1,5) [fill=white,label={[label distance=0em]center: \scriptsize  $ \Xi $} ] () {};
		\end{tikzpicture}\otimes\begin{tikzpicture}[scale=0.2,baseline=0.1cm]
			\node at (0,0)  [dot,label= {[label distance=-0.2em]below: \scriptsize  $      $} ] (root) {};
			\node at (0,4)  [dot,label= {[label distance=-0.2em]right: \scriptsize  $     $} ] (center) {};
			\node at (0,6.5)  [dot,label= {[label distance=-0.2em]right: \scriptsize  $     $} ] (centerc) {};
			\draw[kernel1,color=red] (center) to
			node [sloped,below] {\small }     (root);
			\draw[kernel1] (centerc) to
			node [sloped,below] {\small }     (center);
			\node at (0,2) [fill=white,label={[label distance=0em]center: \scriptsize  $ a+k  $} ] () {};
			\node at (0,5.25) [fill=white,label={[label distance=0em]center: \scriptsize  $ \Xi  $} ] () {};
		\end{tikzpicture},
\end{equs}
And
\begin{equs}
\Delta_r\begin{tikzpicture}[scale=0.2,baseline=0.1cm]
			\node at (0,0)  [dot,label= {[label distance=-0.2em]below: \scriptsize  $      $} ] (root) {};
			\node at (0,4)  [dot,label= {[label distance=-0.2em]right: \scriptsize  $     $} ] (center) {};
			\node at (0,6.5)  [dot,label= {[label distance=-0.2em]right: \scriptsize  $     $} ] (centerc) {};
			\node at (-2,2)  [dot,label= {[label distance=-0.2em]right: \scriptsize  $     $} ] (left) {};
			\draw[kernel1] (center) to
			node [sloped,below] {\small }     (root);
			\draw[kernel1] (centerc) to
			node [sloped,below] {\small }     (center);
			\draw[kernel1] (left) to
			node [sloped,below] {\small }     (root);
			\node at (0,2) [fill=white,label={[label distance=0em]center: \scriptsize  $ a  $} ] () {};
			\node at (0,5.25) [fill=white,label={[label distance=0em]center: \scriptsize  $ \Xi  $} ] () {};
			\node at (-1,1) [fill=white,label={[label distance=0em]center: \scriptsize  $ \Xi $} ] () {};
		\end{tikzpicture}=\sum_{k\in\N^{d+1}}\frac{1}{k!}\begin{tikzpicture}[scale=0.2,baseline=0.1cm]
			\node at (0,0)  [dot,label= {[label distance=-0.2em]below: \scriptsize  $      $} ] (root) {};
			\node at (0,4)  [dot,label= {[label distance=-0.2em]right: \scriptsize  $     $} ] (center) {};
			\node at (0,4)  [dot,label={[label distance=-0.2em]right: \scriptsize  $ k $}] (right) {};
			\node at (-2,2)  [dot,label= {[label distance=-0.2em]right: \scriptsize  $     $} ] (left) {};
			\draw[kernel1,color=gray] (center) to
			node [sloped,below] {\small }     (root);
			\draw[kernel1] (left) to
			node [sloped,below] {\small }     (root);
			\node at (0,2) [fill=white,label={[label distance=0em]center: \scriptsize  $ a  $} ] () {};
			\node at (-1,1) [fill=white,label={[label distance=0em]center: \scriptsize  $ \Xig $} ] () {};
		\end{tikzpicture}\otimes\begin{tikzpicture}[scale=0.2,baseline=0.1cm]
			\node at (0,0)  [dot,label= {[label distance=-0.2em]below: \scriptsize  $      $} ] (root) {};
			\node at (0,4)  [dot,label= {[label distance=-0.2em]right: \scriptsize  $     $} ] (center) {};
			\node at (0,6.5)  [dot,label= {[label distance=-0.2em]right: \scriptsize  $     $} ] (centerc) {};
			\node at (-2,2)  [dot,label= {[label distance=-0.2em]right: \scriptsize  $     $} ] (left) {};
			\draw[kernel1] (center) to
			node [sloped,below] {\small }     (root);
			\draw[kernel1] (centerc) to
			node [sloped,below] {\small }     (center);
			\draw[kernel1] (left) to
			node [sloped,below] {\small }     (root);
			\node at (0,3) [fill=white,label={[label distance=0em]center: \scriptsize  $ ~~~~~~a+k  $} ] () {};
			\node at (0,5.25) [fill=white,label={[label distance=0em]center: \scriptsize  $ \Xi  $} ] () {};
			\node at (-1,1) [fill=white,label={[label distance=0em]center: \scriptsize  $ \Xi $} ] () {};
		\end{tikzpicture}+\sum_{k\in\N^{d+1}}\frac{1}{k!}\begin{tikzpicture}[scale=0.2,baseline=0.1cm]
			\node at (0,0)  [dot,label= {[label distance=-0.2em]below: \scriptsize  $      $} ] (root) {};
			\node at (0,4)  [dot,label= {[label distance=-0.2em]right: \scriptsize  $     $} ] (center) {};
			\node at (0,4)  [dot,label={[label distance=-0.2em]right: \scriptsize  $ k $}] (right) {};
			\node at (-2,2)  [dot,label= {[label distance=-0.2em]right: \scriptsize  $     $} ] (left) {};
			\draw[kernel1,color=gray] (center) to
			node [sloped,below] {\small }     (root);
			\draw[kernel1] (left) to
			node [sloped,below] {\small }     (root);
			\node at (0,2) [fill=white,label={[label distance=0em]center: \scriptsize  $a  $} ] () {};
			\node at (-1,1) [fill=white,label={[label distance=0em]center: \scriptsize  $ \Xi $} ] () {};
		\end{tikzpicture}\otimes\begin{tikzpicture}[scale=0.2,baseline=0.1cm]
			\node at (0,0)  [dot,label= {[label distance=-0.2em]below: \scriptsize  $      $} ] (root) {};
			\node at (0,4)  [dot,label= {[label distance=-0.2em]right: \scriptsize  $     $} ] (center) {};
			\node at (0,6.5)  [dot,label= {[label distance=-0.2em]right: \scriptsize  $     $} ] (centerc) {};		
			\draw[kernel1] (center) to
			node [sloped,below] {\small }     (root);
			\draw[kernel1] (centerc) to
			node [sloped,below] {\small }     (center);
			\node at (0,2) [fill=white,label={[label distance=0em]center: \scriptsize  $ a+k  $} ] () {};
			\node at (0,5.25) [fill=white,label={[label distance=0em]center: \scriptsize  $ \Xi  $} ] () {};
		\end{tikzpicture}+\begin{tikzpicture}[scale=0.2,baseline=0.1cm]
			\node at (0,0)  [dot,label= {[label distance=-0.2em]below: \scriptsize  $      $} ] (root) {};
			\node at (0,4)  [dot,label= {[label distance=-0.2em]right: \scriptsize  $     $} ] (center) {};
			\node at (0,6.5)  [dot,label= {[label distance=-0.2em]right: \scriptsize  $     $} ] (centerc) {};
			\node at (-2,2)  [dot,label= {[label distance=-0.2em]right: \scriptsize  $     $} ] (left) {};
			\draw[kernel1] (center) to
			node [sloped,below] {\small }     (root);
			\draw[kernel1] (centerc) to
			node [sloped,below] {\small }     (center);
			\draw[kernel1] (left) to
			node [sloped,below] {\small }     (root);
			\node at (0,2) [fill=white,label={[label distance=0em]center: \scriptsize  $ a  $} ] () {};
			\node at (0,5.25) [fill=white,label={[label distance=0em]center: \scriptsize  $ \Xi  $} ] () {};
			\node at (-1,1) [fill=white,label={[label distance=0em]center: \scriptsize  $ \Xi $} ] () {};
		\end{tikzpicture}\otimes\one.
\end{equs}
\end{example}

\begin{proposition} 
We have the following commutation property on $\mcT_0$.
\begin{equation} \label{delta_r_mu}
(\mathrm{id}\otimes\dmu)\Delta_r=\Delta_r\dmu.
\end{equation}
\end{proposition}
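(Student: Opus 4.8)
The plan is to prove the commutation relation $(\mathrm{id}\otimes\dmu)\Delta_r=\Delta_r\dmu$ by induction on the depth (number of generations of edges) of the tree $\tau\in\mcT_0$, exactly in the spirit of the earlier inductive proofs of Proposition \ref{flow} and Lemma \ref{commutggraft}. The base cases $\tau=Y^0$ and $\tau=\Xi Y^0$ are immediate since $\dmu\zeta_l=\zeta_l$ acts trivially on elementary trees (it changes no edge, there being none), and $\Delta_rY^\mfv=Y^\mfv\otimes\one$, $\Delta_r\Xi=\Xig\otimes\Xi+\Xi\otimes\one$; applying $\mathrm{id}\otimes\dmu$ to the right factor $\one$ or $\Xi$ leaves it unchanged, so both sides coincide.

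For the inductive step there are two cases, matching the recursive definitions of $\dmu$ and $\Delta_r$: a planted tree $\mcI_a(\tau)$ and a tree product $\sigma\tau$. For the product, both $\dmu$ and $\Delta_r$ are compatible with the tree product in the right way: $\dmu$ satisfies a Leibniz rule (as recalled in the proof of the non-inductive formula for $\dmu$) and $\Delta_r(\sigma\tau)=\Delta_r\sigma\,\Delta_r\tau$ is multiplicative, while $\mathrm{id}\otimes\dmu$ applied to a product $\Delta_r\sigma\,\Delta_r\tau$ distributes via Leibniz over the second tensor legs. So $(\mathrm{id}\otimes\dmu)\Delta_r(\sigma\tau)=(\mathrm{id}\otimes\dmu)(\Delta_r\sigma\,\Delta_r\tau)=((\mathrm{id}\otimes\dmu)\Delta_r\sigma)\,\Delta_r\tau+\Delta_r\sigma\,((\mathrm{id}\otimes\dmu)\Delta_r\tau)$, which by the induction hypothesis equals $(\Delta_r\dmu\sigma)\,\Delta_r\tau+\Delta_r\sigma\,(\Delta_r\dmu\tau)=\Delta_r(\dmu\sigma\cdot\tau+\sigma\cdot\dmu\tau)=\Delta_r\dmu(\sigma\tau)$. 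One must be slightly careful that the first tensor leg lands in $\mcT_0^\mfg$ (no $\dmu$ acts there) so the multiplicativity of $\Delta_r$ and the Leibniz rule do not interfere.

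The planted case is the one requiring genuine bookkeeping, and I expect it to be the main obstacle — not because it is deep, but because one must correctly track how the new $\mcI'$-edge interacts with the grey-symbol terms $\CIg_a(X^k)$ produced by $\Delta_r\mcI_a$. Write $\dmu\mcI_a(\tau)=\mcI'_a(\tau)+\mcI_a(\dmu\tau)$, so $\Delta_r\dmu\mcI_a(\tau)=\Delta_r\mcI'_a(\tau)+\Delta_r\mcI_a(\dmu\tau)=\sum_k\tfrac1{k!}\CIg_a(X^k)\otimes\mcI'_{a+k}(\tau)+(\mcI_a\otimes\mathrm{id})\Delta_r\dmu\tau+\sum_k\tfrac1{k!}\CIg_a(X^k)\otimes\mcI_{a+k}(\dmu\tau)$. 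On the other side, $(\mathrm{id}\otimes\dmu)\Delta_r\mcI_a(\tau)=(\mathrm{id}\otimes\dmu)(\mcI_a\otimes\mathrm{id})\Delta_r\tau+\sum_k\tfrac1{k!}(\mathrm{id}\otimes\dmu)(\CIg_a(X^k)\otimes\mcI_{a+k}(\tau))$. The first summand is $(\mcI_a\otimes\dmu)\Delta_r\tau=(\mcI_a\otimes\mathrm{id})(\mathrm{id}\otimes\dmu)\Delta_r\tau$, which by the induction hypothesis is $(\mcI_a\otimes\mathrm{id})\Delta_r\dmu\tau$; the second summand is $\sum_k\tfrac1{k!}\CIg_a(X^k)\otimes\dmu\mcI_{a+k}(\tau)=\sum_k\tfrac1{k!}\CIg_a(X^k)\otimes(\mcI'_{a+k}(\tau)+\mcI_{a+k}(\dmu\tau))$. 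Comparing the two expansions term by term, the three pieces match exactly, which closes the induction. The only subtlety to flag is that $\dmu$ does not see the grey edge $\CIg_a(X^k)$ (it changes only $\mcI$-type edges, and by the conventions set before Definition \ref{deltar}, $\dmu$ acts on $\mcT_0$ where such symbols do not occur in the first leg, while in the second leg $\dmu\mcI_{a+k}$ behaves as usual); once this is observed the computation is purely mechanical.
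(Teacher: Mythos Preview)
Your proof is correct and follows essentially the same route as the paper's: induction on the depth, with the base case handled by noting that $\dmu$ is trivial on elementary trees, the planted case by expanding both $\Delta_r\dmu\mcI_a(\tau)$ and $(\mathrm{id}\otimes\dmu)\Delta_r\mcI_a(\tau)$ and matching the three resulting terms via the induction hypothesis, and the product case by combining the Leibniz rule for $\dmu$ with the multiplicativity of $\Delta_r$. The paper presents the planted case by manipulating one side into the other rather than expanding both separately, but the content is identical; your explicit remark that $\dmu$ ignores the grey edge $\CIg_a(X^k)$ in the first tensor leg is a useful clarification.
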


\begin{proof}
We proceed by induction on the depth of the tree. The statement is obvious for elementary trees, as both sides of the equality are zero. We have for any planted tree
\begin{equs}
\Delta_r\dmu\mcI_a(\tau)&=\Delta_r(\mcI_a'(\tau)+\mcI_a(\dmu\tau))\\
&=\sum_{k\in\N^{d+1}}\frac{1}{k!}\CIg_{a}(X^k) \otimes\mcI'_{a+k}(\tau)+(\mcI_a\otimes\mathrm{id})\Delta_r\dmu\tau\\&~~~~+\sum_{k\in\N^{d+1}}\frac{1}{k!}\CIg_{a}(X^k)\otimes\mcI_{a+k}(\dmu\tau)\\
&=\sum_{k\in\N^{d+1}}\frac{1}{k!}\CIg_{a}(X^k) \otimes\mcI'_{a+k}(\tau)+(\mcI_a\otimes\dmu)\Delta_r\tau\\&~~~~+\sum_{k\in\N^{d+1}}\frac{1}{k!}\CIg_{a}(X^k)\otimes\mcI_{a+k}(\dmu\tau)\\
&=(\mcI_a\otimes\dmu)\Delta_r\tau+\sum_{k\in\N^{d+1}}\frac{1}{k!}\CIg_{a}(X^k) \otimes(\mcI'_{a+k}(\tau)+\mcI_{a+k}(\dmu\tau))\\
&=(\mathrm{id}\otimes\dmu)\Delta_r\mcI_a(\tau).
\end{equs}

We have then, for the product of two trees,
\begin{equs}
\Delta_r\dmu(\sigma\tau)&=\Delta_r(\dmu\sigma\tau+\sigma\dmu\tau)\\
&=\Delta_r\dmu\sigma\Delta_r\tau+\Delta_r\sigma\Delta_r\dmu\tau\\
&=\bigl((\mathrm{id}\otimes\dmu)\Delta_r\sigma\bigl)\Delta_r\tau+\Delta_r\sigma\bigl((\mathrm{id}\otimes\dmu)\Delta_r\tau\bigl)\\
&=(\mathrm{id}\otimes\dmu)(\Delta_r\sigma\Delta_r\tau)\\
&=(\mathrm{id}\otimes\dmu)\Delta_r\sigma\tau.
\end{equs}
\end{proof}

\begin{proposition}\label{lemmagraft}
The following holds for $\sigma,\tau\in \mcT_0$.
\begin{equation}
\Delta_r(\sigma\rgraft_a\tau)=
\sum_{k\in\N^{d+1}}\frac{1}{k!}\bigl(\ggraft_{a,k}\otimes(\sigma\rgraft_{a+k}^{\text{\tiny{root}}}\cdot)\bigl)\Delta_r\tau+\bigl(\mathrm{id}\otimes(\sigma\rgraft_a^{\text{\tiny{non-root}}}\cdot)\bigl)\Delta_r\tau.
\end{equation}
\end{proposition}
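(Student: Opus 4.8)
The plan is to prove the identity by induction on the depth of $\tau$, mirroring the recursive structure of both $\rgraft_a$ and $\Delta_r$. First I would treat the base cases. If $\tau=\zeta_lY^1$, then $\sigma\rgraft_a\tau=\mcI'_a(\sigma)\zeta_lY^0$, and $\Delta_r$ of this splits according to $\Delta_r\mcI'_a(\sigma)=\sum_k\frac1{k!}\CIg_a(X^k)\otimes\mcI'_{a+k}(\sigma)$ times $\Delta_r(\zeta_lY^0)=\zeta_lY^0\otimes\one$; one checks this equals the right-hand side since $\Delta_r\tau=\zeta_lY^1\otimes\one$, the root-grafting term $\sigma\rgraft^{\text{\tiny root}}_{a+k}\one$ vanishes while $\ggraft_{a,k}(\zeta_lY^1)=\CIg_a(X^k)\zeta_lY^1$... one has to be slightly careful here about where the $\mcI'_{a+k}(\sigma)$ is produced — it comes from the second tensor factor via $\sigma\rgraft_{a+k}^{\text{\tiny root}}$ acting on the $\one$-part, i.e. from $\Delta_r\mcI'$ rather than from $\ggraft$, so the bookkeeping must be done attentively. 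The case $\tau=\zeta_lY^0$ is easier, as grafting at the root is forbidden and only the non-root term survives on both sides.

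Next comes the inductive step, which I would split into the two ways a tree in $T_0^{\mfg}$ is built: via $\mcI_{a_i}$ and via the tree product. For a planted tree $\mcI_b(\tau)$ (with appropriate root decoration), I would expand $\sigma\rgraft_a\mcI_b(\tau)$ using the recursive definition of $\rgraft_a$, which distributes the graft into the subtree $\tau$, plus possibly a term $\mcI'_a(\sigma)$ at the root if $\mfv=1$. Then I would apply $\Delta_r\mcI_b=(\mcI_b\otimes\mathrm{id})\Delta_r+\sum_k\frac1{k!}\CIg_b(X^k)\otimes\mcI_{b+k}(\cdot)$, use the inductive hypothesis on $\Delta_r(\sigma\rgraft_a\tau)$, and re-collect terms; the commutation of $\ggraft_{a,k}$ with $\mcI_b$ (from its recursive definition) and the fact that $\rgraft^{\text{\tiny root}}$ and $\rgraft^{\text{\tiny non-root}}$ interact cleanly with $\mcI_b$ will be what makes it close up. For the tree product $\sigma_1\sigma_2$ (a slight notational clash with $\sigma$, so I would rename), I would use multiplicativity $\Delta_r(\sigma_1\sigma_2)=\Delta_r\sigma_1\,\Delta_r\sigma_2$, the Leibniz-type expansion of $\sigma\rgraft_a(\sigma_1\sigma_2)$ into grafts on each factor, and again the inductive hypothesis, checking that the combinatorial factors $\frac1{k!}$ assemble correctly.

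The main obstacle, I expect, will be the careful separation of the root-graft and non-root-graft contributions on the right-hand side, and ensuring that the grey symbols $\CIg_{a+k}$ produced by $\Delta_r$ on the extracted part carry exactly the decoration shifts that match the decoration increases $a\mapsto a+k$ on the surviving edges after contraction. In particular, when the graft lands on an edge that subsequently gets extracted by $\Delta_r$, versus when it lands on an edge that survives in the contracted tree, one gets structurally different terms: the former contributes through $\ggraft_{a,k}$ in the first tensor slot (the extracted piece), the latter through $\sigma\rgraft^{\text{\tiny root}}_{a+k}$ or $\sigma\rgraft^{\text{\tiny non-root}}_a$ acting on the second slot. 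Tracking this dichotomy through the recursion — and in particular verifying that a graft at the root of $\tau$ lands in the extracted part (hence the shift to $a+k$) while a graft strictly above the root may land either way — is the delicate part; once the case distinctions are set up correctly, the identity follows by routine rearrangement of sums. I would also note, as in the surrounding lemmas, that all sums over $\N^{d+1}$ are effectively finite because edge decorations are bounded by $q$, so no convergence issues arise.

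\begin{remark}
The identity should be read as the $\Delta_r$-analogue of the duality Lemma \ref{duality} for $\Delta_a$: grafting with a red edge of decoration $a$ and then extracting at the root either (i) extracts a piece containing the freshly grafted edge, which after contraction re-emerges as a grey $\CIg_{a,k}$-insertion recorded by $\ggraft_{a,k}$ while the convolution decoration is shifted to $a+k$ in the remainder, or (ii) leaves the grafted edge in the non-extracted part, in which case the graft simply commutes past the extraction as a non-root graft.
\end{remark}
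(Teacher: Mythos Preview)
Your proposal is correct and follows essentially the same route as the paper: induction on the depth of $\tau$, with base cases the elementary trees $\zeta_lY^\mfv$ and inductive steps for planted trees $\mcI_b(\tau)$ and tree products, exploiting multiplicativity of $\Delta_r$ and the recursive definitions of $\rgraft_a$ and $\ggraft_{a,k}$. Your hesitation in the base case (the $\mcI'_{a+k}(\sigma)$ does indeed come from $\sigma\rgraft_{a+k}^{\text{\tiny root}}$ acting on the $\one$ in the second tensor slot of $\Delta_r\tau$, not from $\ggraft_{a,k}$) is resolved exactly as you indicate after your self-correction, and the paper handles it the same way; for the product step the paper uses the identity $\ggraft_{a,k}(\tau'\mu')=\mu'\ggraft_{a,k}\tau'+\tau'\ggraft_{a,k}\mu'-\CIg_a(X^k)\tau'\mu'$ to recombine terms, which is the one concrete algebraic fact you would need to write down explicitly.
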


\begin{proof}
We proceed by induction on the depth of $\tau$. Let us start with elementary trees. We have $\sigma\rgraft_aY^1\one=Y^1\mcI'(\sigma)$ so that
\begin{equs}
\Delta_r(\sigma\rgraft_aY^1\one)& =\sum_{k\in\N^{d+1}}\frac{1}{k!}\CIg_{a}(X^k) \otimes\mcI'_{a+k}(\sigma)
\\ &=\sum_{k \in \N^{d+1}} \frac{1}{k!}\bigl(\ggraft_{a,k}\otimes(\sigma
\rgraft_{a+k}^{\text{\tiny{root}}}\cdot)\bigl)\Delta_r\one
\end{equs}
Noticing that the non-root grafting does not happen in this case, the proposition is verified for $\tau=Y^1\one$. The case $\tau=Y^0\one$ is trivial since both sides are $0$. It is proved the same way for $\tau=Y^\mfv\Xi$.\\
 Suppose now it is satisfied for some $\tau$, we want to prove it for $Y^\mfv\mcI_b(\tau)$. We write explicitly the case $\mfv=1$, the proof for the other case being rigorously the same but easier. We choose not to write it for the sake of conciseness. We forget the symbol $Y^1$ for the rest of the proof. First note that $\sigma\rgraft_a\mcI_b(\tau)=\mcI'_a(\sigma)\mcI_b(\tau)+\mcI_b(\sigma\rgraft_a\tau)$. It gives on one hand
\begin{equs}
~&\Delta_r(\sigma\rgraft_a\mcI_b(\tau))=\Delta_r\mcI'_a(\sigma)\Delta_r\mcI_b(\tau)+\Delta_r\mcI_b(\sigma\rgraft_a\tau)\\
&=\left(\sum_{k\in\N^{d+1}}\frac{1}{k!}\CIg_{a}(X^k)\otimes\mcI'_{a+k}(\sigma)\right)\Biggl((\mcI_b\otimes\mathrm{id})\Delta_r\tau\\
&~~~~+\sum_{k\in\N^{d+1}}\frac{1}{k!}\CIg_{b}(X^k)\otimes\mcI_{b+k}(\tau)\Biggl)+(\mcI_b\otimes\mathrm{id})\Delta_r(\sigma\rgraft_a\tau)\\
&~~~~+\sum_{k\in\N^{d+1}}\frac{1}{k!}\CIg_{b}(X^k)\otimes\mcI_{b+k}(\sigma\rgraft_a\tau).
\end{equs}
At this point, we use the induction hypothesis on $\Delta_r(\sigma\rgraft_a\tau)$. We have on the other hand
\begin{multline*}
\bigl(\mathrm{id}\otimes(\sigma
\rgraft_a^{\text{\tiny{non-root}}}\cdot)\bigl)\Delta_r\mcI_b(\tau)=\bigl(\mcI_b\otimes(\sigma
\rgraft_a^{\text{\tiny{non-root}}}\cdot)\bigl)\Delta_r\tau\\+\sum_{k\in\N^{d+1}}\frac{1}{k!}\CIg_{b}(X^k)\otimes\mcI_{b+k}(\sigma\rgraft_a\tau),
\end{multline*}
as well as, using Sweedler's notation $\Delta_r\tau=\sum_{(\tau)}\tau_1\otimes\tau_2$,
\begin{equs}
~&\sum_{k\in\N^{d+1}}\frac{1}{k!}\bigl(\ggraft_{a,k}\otimes(\sigma
\rgraft_{a+k}^{\text{\tiny{root}}}\cdot)\bigl)\Delta_r\mcI_b(\tau)\\
&=\sum_{k\in\N^{d+1}}\frac{1}{k!}\bigl(\ggraft_{a,k}\otimes(\sigma
\rgraft_{a+k}^{\text{\tiny{root}}}\cdot)\bigl)\left((\mcI_b\otimes\mathrm{id})\Delta_r\tau+\sum_{i\in\N^{d+1}}\frac{1}{i!}\CIg_{b}(X^i)\otimes\mcI_{b+i}(\tau)\right)\\
&=\sum_{k\in\N^{d+1}}\sum_{(\tau)}\frac{1}{k!}\ggraft_{a,k}\mcI_b(\tau_1)\otimes\mcI'_{a+k}(\sigma)\tau_2\\
&~~~~+\sum_{i,k\in\N^{d+1}}\frac{1}{i!k!}\CIg_{a}(X^i)\CIg_{b}(X^k)\otimes\mcI'_{a+k}(\sigma)\mcI_{b+k}(\tau).
\end{equs}
The first term of the last line reads, using the definition of $\ggraft_{a,k}$,
\begin{equs}
~&\sum_{(\tau)}\sum_{k\in\N^{d+1}}\left(\frac{1}{k!}\CIg_{a}(X^k)\mcI_b(\tau_1)+\mcI_b(\ggraft_{a,k}\tau_1)\right)\otimes\mcI'_{a+k}(\sigma)\tau_2\\
&=\left(\sum_{k\in\N^{d+1}}\frac{1}{k!}\CIg_{a}(X^k)\otimes\mcI'_{a+k}(\sigma)\right)(\mcI_b\otimes\mathrm{id})\Delta_r\tau\\
&~~~~+(\mcI_b\otimes\mathrm{id})\sum_{k\in\N^{d+1}}\frac{1}{k!}\bigl(\ggraft_{a,k}\otimes(\sigma
\rgraft_{a+k}^{\text{\tiny{root}}}\cdot)\bigl)\Delta_r\tau.
\end{equs}
Putting it all together gives the result for $\mcI_b(\tau)$. We now assume the result true for two trees $\mu$ and $\tau$ and prove it for $\sigma\tau$. To begin, we have the relation \begin{equation*}
	\sigma\rgraft_a\tau\mu=\mcI'_a(\sigma)\tau\mu+\mu(\sigma\rgraft^{\text{\tiny{non-root}}}\tau)+\tau(\sigma\rgraft^{\text{\tiny{non-root}}}\mu).
\end{equation*}
 One then gets
\begin{equs}
\Delta_r(\sigma\rgraft_a\tau\mu)&=\left(\sum_{k\in\N^{d+1}}\frac{1}{k!}\CIg_{a}(X^k)\otimes\mcI'_{a+k}(\sigma)\right)\Delta_r\tau\Delta_r\mu+\Delta_r\mu\Delta_r(\sigma\rgraft^{\text{\tiny{non-root}}}\tau)\\
&~~~~+\Delta_r\tau\Delta_r(\sigma\rgraft^{\text{\tiny{non-root}}}\mu)\\
&=\left(\sum_{k\in\N^{d+1}}\frac{1}{k!}\CIg_{a}(X^k)\otimes\mcI'_{a+k}(\sigma)\right)\Delta_r\tau\Delta_r\mu-\Delta_r\mu\Delta_r(\mcI_a'(\sigma)\tau)\\
&~~~~-\Delta_r\tau\Delta_r(\mcI_a'(\sigma)\mu)+\Delta_r\mu\Delta_r(\sigma\rgraft_a\tau)+\Delta_r\tau\Delta_r(\sigma\rgraft_a\mu)\\
&=-\left(\sum_{k\in\N^{d+1}}\frac{1}{k!}\CIg_{a}(X^k)\otimes\mcI'_{a+k}(\sigma)\right)\Delta_r\tau\Delta_r\mu\\
&~~~~+\Delta_r\mu\sum_{k\in\N^{d+1}}\frac{1}{k!}\bigl(\ggraft_{a,k}\otimes(\sigma
\rgraft_{a+k}^{\text{\tiny{root}}}\cdot)\bigl)\Delta_r\tau\\
&~~~~+\Delta_r\tau\sum_{k\in\N^{d+1}}\frac{1}{k!}\bigl(\ggraft_{a,k}\otimes(\sigma
\rgraft_{a+k}^{\text{\tiny{root}}}\cdot)\bigl)\Delta_r\mu\\
&~~~~+\Delta_r\tau\bigl(\mathrm{id}\otimes(\sigma\rgraft^{\text{\tiny{non-root}}}\cdot)\bigl)\Delta_r\mu+\Delta_r\mu\bigl(\mathrm{id}\otimes(\sigma\rgraft^{\text{\tiny{non-root}}}\cdot)\bigl)\Delta_r\tau\\
&=\sum_{k\in\N^{d+1}}\frac{1}{k!}\bigl(\ggraft_{a,k}\otimes(\sigma
\rgraft_{a+k}^{\text{\tiny{root}}}\cdot)\bigl)\Delta_r(\tau\mu)+\Delta_r\tau\bigl(\mathrm{id}\otimes(\sigma\rgraft^{\text{\tiny{non-root}}}\cdot)\bigl)\Delta_r\mu\\
&~~~~+\Delta_r\mu\bigl(\mathrm{id}\otimes(\sigma\rgraft^{\text{\tiny{non-root}}}\cdot)\bigl)\Delta_r\tau
\end{equs}
where we have used to write the last line the equality $\ggraft_{a}(\tau'\mu')=\mu'\ggraft_{a}\tau'+\tau'\ggraft_{a}\mu'-\CIg_{a}(\one)\tau'\sigma'$. To finish the proof, one can see that, in Sweedler's notation
\begin{equs}
\bigl(\ggraft_a\otimes(\sigma\rgraft_a^{\text{\tiny{non-root}}}\cdot)\bigl)&\Delta_r(\tau\mu)=\sum_{(\tau),(\mu)}\tau_1\mu_1\otimes(\sigma\rgraft_a^{\text{\tiny{non-root}}}(\tau_2\mu_2))\\
&=\sum_{(\tau),(\mu)}\tau_1\mu_1\otimes(\tau_2(\sigma\rgraft_a^{\text{\tiny{non-root}}}\mu_2)+\mu_2(\sigma\rgraft_a^{\text{\tiny{non-root}}}\tau_2))\\
&=\Delta_r\tau\bigl(\mathrm{id}\otimes(\sigma\rgraft^{\text{\tiny{non-root}}}\cdot)\bigl)\Delta_r\mu+\Delta_r\mu\bigl(\mathrm{id}\otimes(\sigma\rgraft^{\text{\tiny{non-root}}}\cdot)\bigl)\Delta_r\tau.
\end{equs}
\end{proof}

\section{Evaluation map and renormalisation}\label{models}

We want to define, as in regularity structures, a realization map that transforms an abstract tree into a distribution. We want this map to also contain the renormalisation of the given tree. We choose to use a formalism that is close to the one introduced in \cite{BR18}, and perfected in \cite{BB21b}, using the so-called preparation maps.  We recall these definitions here, \textit{but the reader must keep in mind that there are given only for comparison, and will not be reused in the rest of the paper.}

One first defines the preparation map on $\mcT_0^X$ by $R=(\ell\otimes\mathrm{id})\delta_r$, where $\ell$ is a character on trees (typically the BHZ character introduced in \cite{BHZ}), and $\delta_r$ has been defined in equation \eqref{deltarrs}. The model  is then set inductively in the depth of the tree by alternating between a multiplicative model $\Pi_x^{R\times}$ and the preparation map $R$, precisely:

\begin{equation}\label{modelrs}
\begin{split}
&(\Pi_{x,\eps}^{R\times}\one)(y)=1,~~(\Pi_{x,\eps}^{R\times}\Xi)(y)=\xi_\eps(y)~~\mathrm{and}~~(\Pi_{x,\eps}^{R\times}X_i)=y_i-x_i,\\
&(\Pi_{x,\eps}^{R\times}\sigma\tau)(y)=(\Pi_{x,\eps}^{R\times}\sigma)(y)\times(\Pi_{x,\eps}^{R\times}\tau)(y),\\
&(\Pi_{x,\eps}^{R\times}\mcI_a(\tau))(y)=(\D^a G*\Pi_{x,\eps}^{R\times}R\tau)(y)
 -\sum_{\tiny{|k|_\mfs\leq \deg(\mcI_a\tau)}}\frac{(y-x)^k}{k!}(\D^{a+k} G*\Pi_{x,\eps}^{R\times}R\tau)(x),\\
&(\Pi_{x,\eps}^R\tau)(y)=(\Pi_{x,\eps}^{R\times}R\tau)(y).
\end{split}
\end{equation}

The formalism we introduce in the context of the flow approach involves the same concept. However, a main difference appears compared to regularity structures since in the flow approach, the iterated stochastic integrals are not localized, \textit{i.e.} some of the implicated parameters of the functional ($\varphi$ or $\Tilde{\varphi}$) are being convolved by the singular kernels, where in regularity structures, the terms in the ansatz consist of a purely stochastic term and a functional side to side. A localisation step still has to be applied at the end of the procedure, on the extracted sub-trees. This action is represented by the localisation map $\Mloc$, applied on the left-hand side of the coproduct. The work of this section also highlights why the top-down renormalisation is the same as the bottom-up one, the usual scheme adopted in the previous works being top-down where ours is bottom-up. In the regularity structures literature, top-down means that the form of the renormalisation counterterm is assumed and one identifies the renormalisation coefficients afterwards, where bottom-up is the opposite, \ie no assumption is made but the counterterm is found at the end of the procedure.

\begin{definition}[Evaluation map]\label{eval}
Given $\eps>0$ and $\mu\in[0,1]$, we define a map $\teval$ on $\mcT_0$ inductively. We first set
\begin{equation}
(\heval\one)[\phi,\Tilde{\phi}]=1~~\mathrm{and}~~(\heval\Xi)[\phi,\Tilde{\phi}]=\xi_\eps.
\end{equation}
Then
\begin{equation}
\heval(\tau\sigma)=\heval\tau\times\heval\sigma,
\end{equation}
and
\begin{equation}
\heval\mcI_a(\tau)=\D^a(G-G_\mu)*\eval^R\tau,~~~~\heval\mcI'_a(\tau)=-\D^a\dot G_\mu*\eval^R\tau.
\end{equation}
Finally,
\begin{equation} \label{pi_R}
\teval\tau=(\ell\TUpsilon\otimes\heval)(\Mloc\otimes\mathrm{id})\Delta_r.
\end{equation}
We also define another evaluation map $\eval^R $ that is, in some way, blind to the decoration $\mfv$ by $(\eval^R\tau)[\phi]=(\teval\tau)[\phi,\phi]$. We recall that $\ell$ vanishes on trees of non-negative degree. We recall that then $\ell$ leaves only a finite number of terms, as the equation is assumed subcritical.
\end{definition}

This definition is not recursive in the sense of the flow (but it is, of course, recursive in the depth of the tree). It is where we clearly differ from the previous works. We will explain with several arguments below why this definition is compatible with the flow recursion and thus that it matches what has been done until now.

\begin{example}\label{exeval}
Let us give a very simple example of computation of $\teval$ and $\eval^R$. We take the very special case where $g=g(\phi)$ and $f=f(\phi)$, in dimension $1$.
\begin{equs}
~&\Biggl(\teval\begin{tikzpicture}[scale=0.2,baseline=0.1cm]
			\node at (0,0)  [dot,fill=blue,label= {[label distance=-0.2em]below: \scriptsize  $      $} ] (root) {};
			\node at (2,3)  [dot,label={[label distance=-0.2em]right: \scriptsize  $  $}] (right) {};
			\node at (-2,3)  [dot,label={[label distance=-0.2em]above: \scriptsize  $ $} ] (left) {};
			\node at (2,5.5)  [dot,label={[label distance=-0.2em]above: \scriptsize  $ $} ] (rightc) {};
			\node at (-2,5.5)  [dot,label={[label distance=-0.2em]above: \scriptsize  $ $} ] (leftc) {};
			\draw[kernel1] (right) to
			node [sloped,below] {\small }     (root); 
			\draw[kernel1] (right) to
			node [sloped,below] {\small }     (rightc); 
			\draw[kernel1] (left) to
			node [sloped,below] {\small }     (root);
			\draw[kernel1] (leftc) to
			node [sloped,below] {\small }     (left);
			\node at (2,4.25) [fill=white,label={[label distance=0em]center: \scriptsize  $ \Xi $} ] () {};
			\node at (-1.25,1.5) [fill=white,label={[label distance=0em]center: \scriptsize  $ 0 $} ] () {};
			\node at (1.25,1.5) [fill=white,label={[label distance=0em]center: \scriptsize   $ 0  $} ] () {};
			\node at (-2,4.25) [fill=white,label={[label distance=0em]center: \scriptsize  $ \Xi $} ] () {};
		\end{tikzpicture}\Biggl)[\phi,\Tilde{\phi}]=\D^2g(\phi)\bigl((G-G_\mu)*(f(\Tilde{\phi})\xi_\eps)\bigl)^2+\ell\Biggl(\begin{tikzpicture}[scale=0.15,baseline=0.1cm]
			\node at (0,0)  [dot,label= {[label distance=-0.2em]below: \scriptsize  $      $} ] (root) {};
			\node at (2,3)  [dot,label={[label distance=-0.2em]right: \scriptsize  $  $}] (right) {};
			\node at (-2,3)  [dot,label={[label distance=-0.2em]above: \scriptsize  $ $} ] (left) {};
			\node at (2,5.5)  [dot,label={[label distance=-0.2em]above: \scriptsize  $ $} ] (rightc) {};
			\node at (-2,5.5)  [dot,label={[label distance=-0.2em]above: \scriptsize  $ $} ] (leftc) {};
			\draw[kernel1] (right) to
			node [sloped,below] {\small }     (root); 
			\draw[kernel1] (right) to
			node [sloped,below] {\small }     (rightc); 
			\draw[kernel1] (left) to
			node [sloped,below] {\small }     (root);
			\draw[kernel1] (leftc) to
			node [sloped,below] {\small }     (left);
			\node at (2,4.25) [fill=white,label={[label distance=0em]center: \scriptsize  $ \Xi $} ] () {};
			\node at (-1.25,1.5) [fill=white,label={[label distance=0em]center: \scriptsize  $ 0 $} ] () {};
			\node at (1.25,1.5) [fill=white,label={[label distance=0em]center: \scriptsize   $ 0  $} ] () {};
			\node at (-2,4.25) [fill=white,label={[label distance=0em]center: \scriptsize  $ \Xi $} ] () {};
		\end{tikzpicture}\Biggl)\D^2g(\phi)f(\Tilde{\phi})^2\\
		&~~~~+\ell\Biggl(\begin{tikzpicture}[scale=0.15,baseline=0.1cm]
			\node at (0,0)  [dot,label= {[label distance=-0.2em]below: \scriptsize  $      $} ] (root) {};
			\node at (0,0)  [dot,label= {[label distance=-0.25em]left: \scriptsize  $    $} ] (center) {};
			\node at (2,3)  [dot,label={[label distance=-0.2em]right: \scriptsize  $1  $}] (right) {};
			\node at (-2,3)  [dot,label={[label distance=-0.2em]above: \scriptsize  $ $} ] (left) {};
			\node at (2,5.5)  [dot,label={[label distance=-0.2em]above: \scriptsize  $ $} ] (rightc) {};
			\node at (-2,5.5)  [dot,label={[label distance=-0.2em]above: \scriptsize  $ $} ] (leftc) {};
			\draw[kernel1] (right) to
			node [sloped,below] {\small }     (root); 
			\draw[kernel1] (right) to
			node [sloped,below] {\small }     (rightc); 
			\draw[kernel1] (left) to
			node [sloped,below] {\small }     (root);
			\draw[kernel1] (leftc) to
			node [sloped,below] {\small }     (left);
			\node at (2,4.25) [fill=white,label={[label distance=0em]center: \scriptsize  $ \Xi $} ] () {};
			\node at (-1.25,1.5) [fill=white,label={[label distance=0em]center: \scriptsize  $ 0 $} ] () {};
			\node at (1.25,1.5) [fill=white,label={[label distance=0em]center: \scriptsize   $ 0  $} ] () {};
			\node at (-2,4.25) [fill=white,label={[label distance=0em]center: \scriptsize  $ \Xi $} ] () {};
		\end{tikzpicture}\Biggl)\D^2g(\phi)\D\Tilde{\phi}\D f(\Tilde{\phi})f(\Tilde{\phi})+\ell\Biggl(\begin{tikzpicture}[scale=0.15,baseline=0.1cm]
			\node at (0,0)  [dot,label= {[label distance=-0.2em]below: \scriptsize  $      $} ] (root) {};
			\node at (0,0)  [dot,label= {[label distance=-0.25em]left: \scriptsize  $    $} ] (center) {};
			\node at (2,3)  [dot,label={[label distance=-0.2em]right: \scriptsize  $1$}] (right) {};
			\node at (-2,3)  [dot,label={[label distance=-0.2em]left: \scriptsize  $1$} ] (left) {};
			\node at (2,5.5)  [dot,label={[label distance=-0.2em]above: \scriptsize  $ $} ] (rightc) {};
			\node at (-2,5.5)  [dot,label={[label distance=-0.2em]above: \scriptsize  $ $} ] (leftc) {};
			\draw[kernel1] (right) to
			node [sloped,below] {\small }     (root); 
			\draw[kernel1] (right) to
			node [sloped,below] {\small }     (rightc); 
			\draw[kernel1] (left) to
			node [sloped,below] {\small }     (root);
			\draw[kernel1] (leftc) to
			node [sloped,below] {\small }     (left);
			\node at (2,4.25) [fill=white,label={[label distance=0em]center: \scriptsize  $ \Xi $} ] () {};
			\node at (-1.25,1.5) [fill=white,label={[label distance=0em]center: \scriptsize  $ 0 $} ] () {};
			\node at (1.25,1.5) [fill=white,label={[label distance=0em]center: \scriptsize   $ 0  $} ] () {};
			\node at (-2,4.25) [fill=white,label={[label distance=0em]center: \scriptsize  $ \Xi $} ] () {};
		\end{tikzpicture}\Biggl)\D^2g(\phi)\big(\D\Tilde{\phi}\D f(\Tilde{\phi})\big)^2+\dots
\end{equs}
Note that there is no term with polynomial decoration at the root because of invariance by translation.
\end{example}

In this example, the interpretation of $\Tilde{\phi}$ becomes clear. If we compose the expression above in a Fréchet derivative in $\phi$ (as set in definition \ref{defb}), the functionals in $\Tilde{\phi}$ are going to be ignored, which is the analytic counterpart of the decoration $\mfv$, which authorizes grafting or not.

\subsection{Commutation with the derivative}

\begin{proposition}\label{commutation}
The following commutation relations hold
\begin{equation}
\D_\mu\teval=\teval\dmu,
\end{equation}
and on the multiplicative evaluation map
\begin{equation}
\D_\mu\heval=\heval\dmu.
\end{equation}
\end{proposition}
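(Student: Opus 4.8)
The plan is to prove the identity $\D_\mu \teval = \teval \dmu$ by induction on the depth of the tree, exactly as all the companion propositions in this section have been proved. The key structural inputs will be (i) the recursive defining formula \eqref{pi_R}, namely $\teval\tau = (\ell\TUpsilon\otimes\heval)(\Mloc\otimes\mathrm{id})\Delta_r$; (ii) the commutation property \eqref{delta_r_mu}, i.e.\ $(\mathrm{id}\otimes\dmu)\Delta_r = \Delta_r\dmu$; (iii) the non-inductive formula $\dmu\tau = \sum_{e\in E_\tau}\uparrow_\mu^e\tau$ and its Leibniz rule; and (iv) the elementary analytic fact that $\D_\mu\bigl(\D^a(G-G_\mu)\bigr) = -\D^a\dot G_\mu = \D_\mu\bigl(\mathcal{I}'\text{-kernel}\bigr)$ together with the Leibniz rule for $\D_\mu$ acting on products of distributions.

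First I would set up the induction. The base cases $\tau=\one$ and $\tau=\Xi$ are immediate: $\heval\one = 1$, $\heval\Xi = \xi_\eps$, and $\dmu\one=\one$, $\dmu\Xi=\Xi$ (but both sides vanish since $\D_\mu 1=0$, $\D_\mu\xi_\eps=0$ — here one uses that on elementary trees $\dmu$ acts trivially in the sense that $\teval$ of them has no $\mu$-dependence). For the inductive step on a planted tree $\mcI_a(\tau)$, I would first establish an auxiliary statement at the level of $\heval$: that $\D_\mu\heval\mcI_a(\tau) = \heval(\mcI'_a(\tau)) + \heval\mcI_a(\dmu\tau)$. The first term comes from differentiating the kernel $\D^a(G-G_\mu)$ in $\mu$ (which by the analytic fact gives $-\D^a\dot G_\mu$, matching the sign convention $\heval\mcI'_a(\tau) = -\D^a\dot G_\mu * \eval^R\tau$), and the second from the chain rule hitting $\eval^R\tau$ inside the convolution, invoking the induction hypothesis $\D_\mu\eval^R\tau = \eval^R\dmu\tau$. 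The multiplicativity $\heval(\tau\sigma)=\heval\tau\times\heval\sigma$ and the Leibniz rule for $\D_\mu$ then give the analogous statement for tree products, so that $\D_\mu\heval = \heval\dmu$ holds on all of $\mcT_0$ (reading $\dmu$ through the non-inductive edge-sum formula). Note that $\D_\mu$ annihilates $\ell\TUpsilon$ applied to any fixed tree, since the elementary differentials and the character $\ell$ carry no $\mu$-dependence; the only $\mu$-dependence in $\teval$ sits in the $\heval$-factor.

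Then I would assemble the main identity from the recursive formula. Applying $\D_\mu$ to $\teval\tau = (\ell\TUpsilon\otimes\heval)(\Mloc\otimes\mathrm{id})\Delta_r\tau$, since $\ell\TUpsilon$ and $\Mloc$ and $\Delta_r$ are all $\mu$-independent operators, $\D_\mu$ passes through to act only on the $\heval$ slot: $\D_\mu\teval\tau = (\ell\TUpsilon\otimes(\D_\mu\heval))(\Mloc\otimes\mathrm{id})\Delta_r\tau = (\ell\TUpsilon\otimes(\heval\dmu))(\Mloc\otimes\mathrm{id})\Delta_r\tau$, using the auxiliary statement just proved. This equals $(\ell\TUpsilon\otimes\heval)(\mathrm{id}\otimes\dmu)(\Mloc\otimes\mathrm{id})\Delta_r\tau = (\ell\TUpsilon\otimes\heval)(\Mloc\otimes\mathrm{id})(\mathrm{id}\otimes\dmu)\Delta_r\tau$, where the last step just commutes the two operators $\dmu$ (acting on the right tensor factor) and $\Mloc$ (acting on the left). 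Finally, invoking \eqref{delta_r_mu}, $(\mathrm{id}\otimes\dmu)\Delta_r\tau = \Delta_r\dmu\tau$, so the whole thing collapses to $(\ell\TUpsilon\otimes\heval)(\Mloc\otimes\mathrm{id})\Delta_r\dmu\tau = \teval\dmu\tau$, which is the claim. A small care point: $\dmu$ produces trees in $\mcT_1$, so one must check that \eqref{pi_R} is also the valid defining formula for $\teval$ on $\mcT_1$ (which it is, since $\Delta_r$ is defined on $\mcT_1$ and sends it into $\mcT_0^\mfg\otimes\mcT_1$), and that the analytic identity for $\D_\mu$ on the $\mcI'$-kernel $-\D^a\dot G_\mu$ is handled — here $\D_\mu(-\D^a\dot G_\mu*\eval^R\tau)$ simply produces an $\mcI''$-type object which never arises because $\dmu$ applied to a tree already carrying a prime is, by the degree bookkeeping, outside the range we need, or equivalently one restricts $\dmu$ to $\mcT_0$ as in the statement of \eqref{delta_r_mu}.

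The main obstacle, I expect, is the auxiliary analytic lemma $\D_\mu\heval = \heval\dmu$ and in particular getting the signs and the convolution identities exactly right: one needs $\D_\mu\bigl(\D^a(G-G_\mu)\bigr) = -\D^a\dot G_\mu$ as distributions (this is just $\dot G_\mu = \D_\mu G_\mu$ and linearity), but one must be careful that differentiating under the convolution sign $*$ is legitimate — this is where the mollification $\xi_\eps$ and the smoothness/support properties \eqref{cutscale} of $\dot G_\mu$ are silently used, though since the paper explicitly works at fixed $\eps>0$ and avoids analytic subtleties, this is a routine justification. The combinatorial bookkeeping (that the edge-sum in $\dmu\tau$ matches exactly the terms produced by the Leibniz rule for $\D_\mu$ over the product structure of $\heval$) is precisely parallel to the proof of the non-inductive formula for $\dmu$ already given, so it should go through verbatim. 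Everything else is formal manipulation of the tensor-factor placement of the operators $\ell\TUpsilon$, $\Mloc$, $\heval$, $\dmu$, and $\Delta_r$.
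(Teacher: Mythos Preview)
Your proposal is correct and follows essentially the same approach as the paper: an induction on the tree, first establishing the auxiliary identity $\D_\mu\heval=\heval\dmu$ via the Leibniz rule and the kernel identity $\D_\mu(G-G_\mu)=-\dot G_\mu$, then passing $\D_\mu$ through the tensor formula $\teval=(\ell\TUpsilon\otimes\heval)(\Mloc\otimes\mathrm{id})\Delta_r$ and invoking \eqref{delta_r_mu} to move $\dmu$ across $\Delta_r$. The paper presents the two halves in the opposite order (it writes the $\teval$ computation first, citing the induction hypothesis on $\heval$, and then verifies the $\heval$ cases), but the content is identical; your care point about $\mcI''$-type objects is moot since the statement is only applied to $\mcT_0$ and $\Delta_r$ is already defined on $\mcT_1$.
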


\begin{proof}
We prove the two statements at once by induction on the size of the tree considered. The base case on elementaty trees is immediate since both sides of the equality are zero. From equation \eqref{pi_R}, one has
\begin{equs}
\D_\mu\teval  = (\ell\TUpsilon\otimes \D_\mu \heval)(\Mloc\otimes\mathrm{id})\Delta_r
\end{equs}
Then, by the induction hypothesis $ \D_\mu\eval^{R\times}=\eval^{R\times}\dmu $, one gets
\begin{equs}
	\D_\mu\teval  & = (\ell\TUpsilon\otimes  \heval \dmu)(\Mloc\otimes\mathrm{id})\Delta_r
	\\ & = (\ell\TUpsilon\otimes  \heval )(\Mloc\otimes \dmu)\Delta_r
	\\ & = (\ell\TUpsilon\otimes  \heval )(\Mloc\otimes\mathrm{id})\Delta_r  \dmu
	\\ & =\teval\dmu
\end{equs}
where we have used \eqref{delta_r_mu} for moving from the second to the third line.
Then, one has
\begin{equs}
 \D_\mu	\heval(\tau\sigma) & =   \D_\mu \heval\tau \times\heval\sigma + \heval\tau \times  \D_\mu \heval\sigma
 \\ & =  \heval \dmu \tau \times\heval\sigma + \heval\tau \times   \heval \dmu  \sigma
 \\ & = \heval (\dmu (\tau \sigma))
\end{equs}
where we have used the induction hypothesis in the second line. We have also used the fact that both $ \D_{\mu} $ and $\dmu$ satisfy the Leibniz rule.
To finish, one gets
\begin{equs}
\D_\mu	\heval\mcI_a(\tau)& = \D_\mu \D^a(G-G_\mu)*\eval^R\tau +  \D^a(G-G_\mu)*(\D_\mu \eval^R\tau)
\\ & = -\D^a\dot{G}_\mu*\eval^R\tau +  \D^a(G-G_\mu)*( \eval^R \dmu\tau)
\\ & = \heval\mcI_a'(\tau) +  \heval\mcI_a(\dmu \tau)
\\ &= \heval \dmu \mcI_a(\tau)
\end{equs}
where we have used the induction hypothesis $  \D_\mu \teval\tau =  \teval \dmu \tau$ on the second line. This concludes the proof.
\end{proof}

\begin{remark}\label{comparison}
We exhibit a parallel between arguments displayed in this section and those on the diagram-free method in regularity structures developed in \cite{BN23}. In this paper, the authors consider trees that share many similarities with the present setting. The particularity is that the noises, denoted $\Xi_0$ therein, can be changed to another noise $\Xi_1$ via the use of an abstract derivative $D_\Xi$, which represents combinatorially a Malliavin derivative with respect to the noise. This latter looks very much like our $\dmu$ with the difference that it acts on the noises rather than on the edges. One can make a striking comparison between Proposition \ref{commutation} and Theorem 4.1 there that reads
\begin{equation}
\delta\Pi_x^{R,1}=\Pi_x^{R,1}D_\Xi,
\end{equation}
where $\Pi_x^{R,1}$ is a specific model. This hints a similarity in the renormalisation procedures: on the one hand, one regularizes the noise associated to a node with $D_\Xi$ and then renormalises the rest of the tree and on the other hand, taking the derivative in $\mu$ of an edge allows to renormalise the two parts of the tree independently. This permits an inductive argument in both cases.
\end{remark}

\subsection{Pre-Lie morphism property}

We present a property of pre-Lie morphism involving bilinear operators $B_a$, defined below, and the grafting operations $\rgraft_a$. An analogue in the context of regularity structures would be the pre-Lie morphism property on the elementary differentials. However, in the present case, there is no clear separation between the elementary differentials and the models, since the functionals are not localised. Arguments in this direction have already been exposed in Chandra and Ferdinand's works \cite{CF24a,CF24b}. If these objects are central in the previous works on the flow approach, their use is quite simple because they are used to define the stochastic terms. The subtlety here is to understand how the Fréchet derivatives act not only on the first term of the evaluated trees (the one representing the non-renormalised object), but also on the terms coming from both the renormalisation and the localisation. Note also that the use of a Fréchet derivative is necessary to keep track of where one wants to graft in a tree.

\begin{definition}\label{defb}
For every $a\in\N^d$, we define a bilinear map $B_a$ by
\begin{equation}
B_a(X,Y)[\phi,\Tilde{\phi}]=(D_{\D^a\phi}Y[\phi,\Tilde{\phi}])\cdot(-\D^a\dot G_\mu)*X[\phi,\Tilde{\phi}]. 
\end{equation}
We recall  that $D_{\D^a\phi}$ is the Fréchet derivative with respect to $\D^a\phi$ only. We further define $B=\sum_{a\in\N^{d+1}}B_a$, keeping in mind that this sum is, in fact, finite, so that 
\begin{equation}
B(X,Y)[\phi,\Tilde{\phi}]=(DY[\phi,\Tilde{\phi}])\cdot(-\dot G_\mu)*X[\phi,\Tilde{\phi}],
\end{equation}
with $D$ being the true Fréchet derivative in $\phi$. We finally define
\begin{equation}
B(X,Y)[\phi]=B(X,Y)[\phi,\phi].
\end{equation}
\end{definition}
\begin{proposition}\label{graft}
We have
\begin{equation}
B\big(\teval\sigma,\teval\tau\big)[\phi,\Tilde{\phi}]=\sum_{a\in\N^{d+1}}\teval(\sigma\rgraft_a\tau)[\phi,\Tilde{\phi}]
\end{equation}
In the particular case where $\tau\in\mcT_0^*$, \textit{i.e.} we can graft everywhere, this gives 
\begin{equation}
B\big(\eval^R\sigma,\eval^R\tau\big)[\phi]=\sum_{a\in\N^{d+1}}\eval^R(\sigma\rgraft_a\tau)[\phi].
\end{equation}
\end{proposition}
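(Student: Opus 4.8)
The plan is to prove the identity by induction on the depth of the tree $\tau$, keeping $\sigma$ fixed, and using the structural properties established in the preceding sections—most crucially Proposition~\ref{lemmagraft} (the interaction of $\Delta_r$ with $\rgraft_a$), Lemma~\ref{lemmaupsilon} (the Fréchet derivative acting on $\ell\TUpsilon$ as $\ggraft_a$), Lemma~\ref{commutggraft} (the commutation of $\Mloc$ with the grey grafts), and the defining recursion~\eqref{pi_R} of $\teval$. The key observation is that the Fréchet derivative $D$ in the definition of $B$ must act on each of the three ``pieces'' of $\teval\tau$: the leading non-renormalised term coming from $\heval$, the terms where it hits the extracted subtree processed by $\ell\TUpsilon$, and the terms where it hits the contracted trunk inside $\heval$. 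The combinatorics of $\rgraft_a = \rgraft_a^{\text{\tiny root}} + \rgraft_a^{\text{\tiny non-root}}$ is precisely designed to package these contributions.

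First I would unfold the left-hand side using~\eqref{pi_R}: writing $\teval\tau = (\ell\TUpsilon\otimes\heval)(\Mloc\otimes\mathrm{id})\Delta_r\tau$ in Sweedler notation $\Delta_r\tau = \sum_{(\tau)}\tau_1\otimes\tau_2$, so that $\teval\tau = \sum (\ell\TUpsilon[\Mloc\tau_1])\cdot(\heval\tau_2)$. Applying the Fréchet derivative $D_{\D^a\phi}$ and the Leibniz rule splits $B_a(\teval\sigma,\teval\tau)$ into a sum of two families of terms: one where $D_{\D^a\phi}$ lands on $\ell\TUpsilon[\Mloc\tau_1]$, and one where it lands on $\heval\tau_2$, each multiplied by $(-\D^a\dot G_\mu)*\teval\sigma = \heval\mcI'_a(\sigma)$ evaluated appropriately. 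For the first family, Lemma~\ref{lemmaupsilon} converts $D_{\D^a\phi}\ell\TUpsilon[\cdot]$ into $\ell\TUpsilon[\ggraft_a(\cdot)]$, and then Lemma~\ref{commutggraft} lets me pull the $\ggraft_a$ past $\Mloc$ at the cost of the sum $\sum_k \frac{1}{k!}\ggraft_{a-k,k}$; this is exactly the shape of the first term in Proposition~\ref{lemmagraft}, matching the $\sigma\rgraft_{a+k}^{\text{\tiny root}}$ contribution once one re-indexes. For the second family, the induction hypothesis applied to the subtrees appearing in $\tau_2$ (whose depth is strictly smaller) produces precisely the non-root grafting $\sigma\rgraft_a^{\text{\tiny non-root}}$ acting inside $\heval$, matching the second term of Proposition~\ref{lemmagraft}. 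Summing over $a$ and reassembling via~\eqref{pi_R} then gives $\sum_a\teval(\sigma\rgraft_a\tau)$.

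The base cases are $\tau = \zeta_l Y^\mfv$ (i.e.\ $\tau=Y^\mfv\one$ or $\tau = Y^\mfv\Xi$), where $\Delta_r$ is essentially trivial, the non-root graft vanishes, and the identity reduces to the base case of Lemma~\ref{lemmaupsilon} together with the definition $\heval\mcI'_a(\sigma) = -\D^a\dot G_\mu*\eval^R\sigma$; when $\mfv=0$ both sides vanish because $D_{\D^a\phi}$ annihilates $\Upsilon^l[\Tilde\phi]$. For the inductive step I would treat $\tau = Y^\mfv\mcI_b(\tau')$ and products $\tau = \tau'\tau''$ separately, in both cases expanding $\Delta_r\tau$ via its recursive definition, applying the Leibniz rule for $D$, and invoking the induction hypothesis on the smaller trees; the product case additionally uses the pre-Lie-type relation $\ggraft_a(\tau'\tau'') = \tau''\ggraft_a\tau' + \tau'\ggraft_a\tau'' - \CIg_a(\one)\tau'\tau''$ exactly as in the proof of Proposition~\ref{lemmagraft}. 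The specialisation to $\tau\in\mcT_0^*$ is immediate: there $\mfv=1$ uniformly so grafting is unrestricted, $\rgraft_a^{\text{\tiny non-root}}$ on $\mcT_0^*$ still only touches blue nodes (all of them), and setting $\Tilde\phi=\phi$ in the general identity gives the stated relation for $\eval^R$ since $\eval^R\tau = \teval\tau[\phi,\phi]$ and $B(X,Y)[\phi] = B(X,Y)[\phi,\phi]$ with $D$ the full Fréchet derivative.

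The main obstacle I anticipate is bookkeeping the interplay between the grey symbols $\CIg_a(X^k)$ produced by $\Delta_r$ and the localisation $\Mloc$: one must check carefully that applying $D_{\D^a\phi}$ to $\ell\TUpsilon[\Mloc\tau_1]$ and then using Lemmas~\ref{lemmaupsilon} and~\ref{commutggraft} really reproduces the $\sum_k\frac{1}{k!}\ggraft_{a,k}$ appearing on the left of $\Delta_r(\sigma\rgraft_a\tau)$ in Proposition~\ref{lemmagraft}, including the factorial weights and the shift $a\mapsto a+k$ inside the edge decoration of the re-grafted $\mcI'$. Once the grey-symbol indices are aligned correctly—which is essentially a matter of matching the two occurrences of $\sum_k\frac{1}{k!}\ggraft_{a-k,k}$ versus $\sum_k\frac{1}{k!}\ggraft_{a,k}$ across the $\Mloc$—the rest is a routine, if lengthy, recursive verification mirroring the structure of the proof of Proposition~\ref{lemmagraft}.
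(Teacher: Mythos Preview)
Your plan is correct and uses the same ingredients as the paper: the base cases, Proposition~\ref{lemmagraft}, Lemma~\ref{commutggraft}, and Lemma~\ref{lemmaupsilon}, combined via an induction on depth. The only organisational difference is directional: the paper first isolates the auxiliary identity $\sum_a B_a(\teval\sigma,\heval\tau)=\sum_a\heval(\sigma\rgraft_a^{\text{\tiny non-root}}\tau)$ by a separate short induction on $\heval$ (planted trees and products), and then expands $\sum_a\teval(\sigma\rgraft_a\tau)$ from the \emph{right-hand side} via Proposition~\ref{lemmagraft} and Lemma~\ref{commutggraft} to land on $B$; you instead start from the \emph{left-hand side}, Leibniz-split $D_{\D^a\phi}$ on $\teval\tau$, and match the two families against the two terms of Proposition~\ref{lemmagraft}. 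Your ``second family'' is exactly the paper's auxiliary $\heval$ identity, so when you write ``induction hypothesis applied to the subtrees appearing in $\tau_2$'' you should be a little careful: the clean statement to induct on is the $\heval$ identity itself (since $\heval\mcI_b(\tau')$ feeds back into $\eval^R\tau'$ of strictly smaller depth), rather than directly the full $\teval$ statement on $\tau_2$, which can equal $\tau$ in the primitive part of $\Delta_r$. With that adjustment your argument is the paper's proof run backwards.
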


\begin{proof}
We start with the four base cases.\\
\textit{Case 1}. $\tau=Y^0\one$. We have $\sigma\rgraft_a Y^0\one=0$ on one side. On the other side, we have $(\teval Y^0\one)[\phi,\Tilde{\phi}]=g(\Tilde{\phi},\dots)$ so that its Fréchet derivative in $\phi$ is zero.\\
\textit{Case 2}. $\tau=Y^0\Xi$. The argument is the same as above.\\
\textit{Case 3}. $\tau=Y^1\one$. We have $\sigma\rgraft_aY^1\one=Y^1\mcI'_a(\sigma)$. We remind that, forgetting $Y^1$, 
\begin{equation*}
\Delta_r(\mcI'_a(\sigma))=\sum_{k\in\N^{d+1}}\frac{1}{k!}\CIg_{a}(X^k) \otimes\mcI'_{a+k}(\sigma),
\end{equation*}
which yields, as $\ell(\one)=1$ and $\ell(X^k)=0$ for $k\neq0$,
\begin{equation*}
(\teval\mcI'_a(\sigma))[\phi,\Tilde{\phi}]=\D_a g(\phi,\dots,\nabla^q\phi)\times(-\dot G_\mu)*\teval\sigma,
\end{equation*}
which is what is given by the right-hand side as $\teval(Y^1\one)=g(\phi,\dots)$.\\
\textit{Case 4}. $\tau=Y^1\Xi$. We have, following the same arguments as above,
\begin{equation*}
\Delta_r\Xi=\Xig \otimes\Xi+\Xi \otimes\one.
\end{equation*}
We get
\begin{equation*}
(\teval\Xi)[\phi,\Tilde{\phi}]=f(\phi,\dots,\nabla^p\phi)\xi_\eps+\sum_{\deg(X^k\Xi)<0}\frac{1}{k!}\ell(X^k\Xi)\D^k(f\circ\phi).
\end{equation*}
The result is obtained following the same lines as the end of this proof, that we will not rewrite in this particular case for conciseness. Let us first prove by induction that 
\begin{equation}
\sum_{a\in\N^{d+1}}\heval(\sigma\rgraft_a^{\text{\tiny{non-root}}}\tau)=\sum_{a\in\N^{d+1}}B_a(\teval\sigma,\heval\tau).
\end{equation}
We have successively
\begin{equs}
&\sum_{a\in\N^{d+1}}B_a(\teval\sigma,\heval\mcI_b(\tau))=\D^b(G-G_\mu)*\sum_{a\in\N^{d+1}}B_a(\teval\sigma,\teval\tau)\\
&=\D^b(G-G_\mu)*\sum_{a\in\N^{d+1}}\teval(\sigma\rgraft_a\tau)=\sum_{a\in\N^{d+1}}\heval(\mcI_b(\sigma\rgraft_a\tau))\\
&=\sum_{a\in\N^{d+1}}\heval(\sigma\rgraft_a^{\text{\tiny{non-root}}}\mcI_b(\tau)).
\end{equs} 
And
\begin{equs}
&\sum_{a\in\N^{d+1}}B_a(\teval\sigma,\heval(\tau\mu))\\
&=\sum_{a\in\N^{d+1}}B_a(\teval\sigma,\heval\tau)\times\heval\mu+\heval\tau\times\sum_{a\in\N^{d+1}} B_a(\eval^R\sigma,\heval\mu)\\
&=\sum_{a\in\N^{d+1}}\heval(\sigma\rgraft_a^{\text{\tiny{non-root}}}\tau)\times\heval\mu+\heval\tau\times\sum_{a\in\N^{d+1}}\heval(\sigma\rgraft_a^{\text{\tiny{non-root}}}\mu)\\
&=\sum_{a\in\N^{d+1}}\heval(\sigma\rgraft_a^{\text{\tiny{non-root}}}(\tau\mu)).
\end{equs}

With this statement in hand, we can finish the proof. We have, using Proposition \ref{lemmagraft} and Lemma \ref{commutggraft},
\begin{equs}
&\sum_{a\in\N^{d+1}}(\Mloc\otimes\mathrm{id})\Delta_r(\sigma\rgraft_a\tau)\\
&=\sum_{a\in\N^{d+1}}\sum_{k\in\N^{d+1}}\frac{1}{k!}(\Mloc\otimes\mathrm{id})\bigl(\ggraft_{a,k}\otimes(\sigma\rgraft_{a+k}^{\text{\tiny{root}}}\cdot)\bigl)\Delta_r\tau\\
&~~~~+\sum_{a\in\N^{d+1}}\bigl(\mathrm{id}\otimes(\sigma\rgraft_a^{\text{\tiny{non-root}}}\cdot)\bigl)\Delta_r\tau\\
&=\sum_{k\in\N^{d+1}}\sum_{a\in\N^{d+1}+k}\frac{1}{k!}(\Mloc\otimes\mathrm{id}))\bigl(\ggraft_{a-k,k}\otimes(\sigma\rgraft_{a}^{\text{\tiny{root}}}\cdot)\bigl)\Delta_r\tau\\
&~~~~+\sum_{a\in\N^{d+1}}\bigl(\mathrm{id}\otimes(\sigma\rgraft_a^{\text{\tiny{non-root}}}\cdot)\bigl)\Delta_r\tau\\
&=\sum_{a\in\N^{d+1}}\left(\left(\Mloc\sum_{k\in\N^{d+1}-a}\frac{1}{k!}\ggraft_{a-k,k}\right)\otimes(\sigma\rgraft_{a}^{\text{\tiny{root}}}\cdot)\right)\Delta_r\tau\\
&~~~~+\sum_{a\in\N^{d+1}}\bigl(\mathrm{id}\otimes(\sigma\rgraft_a^{\text{\tiny{non-root}}}\cdot)\bigl)\Delta_r\tau\\
&=\sum_{a\in\N^{d+1}}\bigl(\ggraft_{a}\Mloc\otimes(\sigma\rgraft_{a}^{\text{\tiny{root}}}\cdot)\bigl)\Delta_r\tau+\bigl(\mathrm{id}\otimes(\sigma\rgraft_a^{\text{\tiny{non-root}}}\cdot)\bigl)\Delta_r\tau
\end{equs}
It yields, using Lemma \ref{lemmaupsilon}, in Sweedler's notation,
\begin{equs}
\sum_{a\in\N^{d+1}}\teval(\sigma\rgraft_a\tau)&=\sum_{a\in\N^{d+1}}\sum_{(\tau)}\ell\TUpsilon[\ggraft_a\Mloc\tau_1]\heval(\mcI'_a(\sigma)\tau_2)\\
&~~~~+\sum_{a\in\N^{d+1}}\sum_{(\tau)}\ell\TUpsilon[\Mloc\tau_1]\teval(\sigma\rgraft_a^{\text{\tiny{non-root}}}\tau_2)\\
&=\sum_{a\in\N^{d+1}}\sum_{(\tau)}D_{\D^a\phi}\ell\TUpsilon[\Mloc\tau_1](\D^a\dot G_\mu*\teval\sigma)\times\heval\tau_2\\
&~~~~+\sum_{a\in\N^{d+1}}\sum_{(\tau)}\ell\TUpsilon[\Mloc\tau_1]B_a(\teval\sigma,\heval\tau_2)\\
&=\sum_{a\in\N^{d+1}}\sum_{(\tau)}B_a(\teval\sigma,\ell\TUpsilon[\Mloc\tau_1])\times\heval\tau_2\\
&~~~~+\sum_{a\in\N^{d+1}}\sum_{(\tau)}\ell\TUpsilon[\Mloc\tau_1]B_a(\teval\sigma,\heval\tau_2)\\
&=\sum_{a\in\N^{d+1}}B_a\left(\teval\sigma,\sum_{(\tau)}\ell\TUpsilon[\Mloc\tau_1]\heval\tau_2\right)\\
&=\sum_{a\in\N^{d+1}}B_a(\teval\sigma,\teval\tau).
\end{equs}
\end{proof}

\subsection{The flow equation on the coefficients}\label{seccoeff}

The purpose of this subsection is to explain how our definition of the evaluation map allows to recover the flow equation on the coefficients, that is usually given as a definition. However, before being able to give a clear statement, we need the following map. It is designed to deal with the fact that in Proposition \ref{graft}, the equality cannot be separated index by index (in the sense that we do not have $B_a(\teval\sigma,\teval\tau)[\phi,\Tilde{\phi}]=\teval(\sigma\rgraft_a\tau)[\phi,\Tilde{\phi}]$ but only the result with a summation in $a$, because of a shift on indices in the proof above), so that, given a tree in $T_0^0$, we need to sum on all the edges decorations. Note that for equations with a simple form such as $\phi^4_3$, KPZ, or PAM, this does not appear.

\begin{definition} We define a linear map $\Uparrow:\mcT_0^0\to \mcT_0$, defined inductively on $T_0^0$ by being the identity on $\one$ and $\Xi$, and
\begin{equation}
\begin{split}
\Uparrow\mcI_0(\tau)=\sum_{a\in\N^{d+1}}\mcI_a(\Uparrow\tau),&~~~~\Uparrow\mcI'_0(\tau)=\sum_{a\in\N^{d+1}}\mcI'_a(\Uparrow\tau),\\
\Uparrow\sigma\tau&=\Uparrow\sigma\times\Uparrow\tau,
\end{split}
\end{equation}
and extended by linearity to $\mcT_0^0$.
\end{definition}

\begin{example}We have
\begin{equation*}
\Uparrow\begin{tikzpicture}[scale=0.2,baseline=0.1cm]
			\node at (0,0)  [dot,label= {[label distance=-0.2em]below: \scriptsize  $      $} ] (root) {};
			\node at (2,3)  [dot,label={[label distance=-0.2em]right: \scriptsize  $  $}] (right) {};
			\node at (-2,3)  [dot,label={[label distance=-0.2em]above: \scriptsize  $ $} ] (left) {};
			\node at (2,5.5)  [dot,label={[label distance=-0.2em]above: \scriptsize  $ $} ] (rightc) {};
			\node at (-2,5.5)  [dot,label={[label distance=-0.2em]above: \scriptsize  $ $} ] (leftc) {};
			\draw[kernel1] (right) to
			node [sloped,below] {\small }     (root); 
			\draw[kernel1] (right) to
			node [sloped,below] {\small }     (rightc); 
			\draw[kernel1,color=red] (left) to
			node [sloped,below] {\small }     (root);
			\draw[kernel1] (leftc) to
			node [sloped,below] {\small }     (left);
			\node at (2,4.25) [fill=white,label={[label distance=0em]center: \scriptsize  $ \Xi $} ] () {};
			\node at (-1.25,1.5) [fill=white,label={[label distance=0em]center: \scriptsize  $ 0 $} ] () {};
			\node at (1.25,1.5) [fill=white,label={[label distance=0em]center: \scriptsize   $ 0  $} ] () {};
			\node at (-2,4.25) [fill=white,label={[label distance=0em]center: \scriptsize  $ \Xi $} ] () {};
		\end{tikzpicture}=\sum_{a,b\in\N^{d+1}}\begin{tikzpicture}[scale=0.2,baseline=0.1cm]
			\node at (0,0)  [dot,label= {[label distance=-0.2em]below: \scriptsize  $      $} ] (root) {};
			\node at (2,3)  [dot,label={[label distance=-0.2em]right: \scriptsize  $  $}] (right) {};
			\node at (-2,3)  [dot,label={[label distance=-0.2em]above: \scriptsize  $ $} ] (left) {};
			\node at (2,5.5)  [dot,label={[label distance=-0.2em]above: \scriptsize  $ $} ] (rightc) {};
			\node at (-2,5.5)  [dot,label={[label distance=-0.2em]above: \scriptsize  $ $} ] (leftc) {};
			\draw[kernel1] (right) to
			node [sloped,below] {\small }     (root); 
			\draw[kernel1] (right) to
			node [sloped,below] {\small }     (rightc); 
			\draw[kernel1,color=red] (left) to
			node [sloped,below] {\small }     (root);
			\draw[kernel1] (leftc) to
			node [sloped,below] {\small }     (left);
			\node at (2,4.25) [fill=white,label={[label distance=0em]center: \scriptsize  $ \Xi $} ] () {};
			\node at (-1.25,1.5) [fill=white,label={[label distance=0em]center: \scriptsize  $ a $} ] () {};
			\node at (1.25,1.5) [fill=white,label={[label distance=0em]center: \scriptsize   $ b  $} ] () {};
			\node at (-2,4.25) [fill=white,label={[label distance=0em]center: \scriptsize  $ \Xi $} ] () {};
		\end{tikzpicture}.
\end{equation*}
Note that, in practice, the sum on the right-hand side will always be finite, as the functions $g$ and $f$ depend only on a finite number of derivatives of $\phi$.
\end{example}

We state two propositions that highlight how this new map behaves with some of the objects we used in the previous sections.

\begin{lemma} We have, on $\mcT_0^0$,
\begin{equation}
\Uparrow\dmu=\dmu\Uparrow,
\end{equation}
as well as, for $\tau,\sigma\in\mcT_0^0$,
\begin{equation}
\Uparrow(\sigma\rgraft_0\tau)=\sum_a\Uparrow\sigma\rgraft_a\Uparrow\tau
\end{equation}
\end{lemma}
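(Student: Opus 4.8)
The plan is to prove both identities by induction on the depth of the trees, mirroring the inductive definitions of $\Uparrow$, $\dmu$, and $\rgraft_a$. Since $\Uparrow$, $\dmu$ and the grafting operations are all defined recursively by the same three-case pattern (elementary trees $\one$ and $\Xi$; planted trees $\mcI_0(\tau)$ or $\mcI'_0(\tau)$; tree products $\sigma\tau$), the proof reduces to checking the base cases and then pushing the identities through each recursive step.

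For the first identity $\Uparrow\dmu=\dmu\Uparrow$, the base cases are immediate: $\dmu$ acts as the identity on $\one$ and $\Xi$ (since $\dmu\zeta_l=\zeta_l$) and so does $\Uparrow$, so both sides agree. For a planted tree $\mcI_0(\tau)$, I would compute $\dmu\Uparrow\mcI_0(\tau)=\dmu\sum_a\mcI_a(\Uparrow\tau)=\sum_a\bigl(\mcI'_a(\Uparrow\tau)+\mcI_a(\dmu\Uparrow\tau)\bigr)$, then use the induction hypothesis $\dmu\Uparrow\tau=\Uparrow\dmu\tau$ and reassemble this as $\Uparrow\bigl(\mcI'_0(\tau)+\mcI_0(\dmu\tau)\bigr)=\Uparrow\dmu\mcI_0(\tau)$, using that $\Uparrow\mcI'_0=\sum_a\mcI'_a\Uparrow$. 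The product case follows from the fact that both $\dmu$ and $\Uparrow$ are multiplicative/Leibniz: $\dmu\Uparrow(\sigma\tau)=\dmu(\Uparrow\sigma\cdot\Uparrow\tau)=\dmu\Uparrow\sigma\cdot\Uparrow\tau+\Uparrow\sigma\cdot\dmu\Uparrow\tau$, and the induction hypothesis plus multiplicativity of $\Uparrow$ closes it.

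For the second identity $\Uparrow(\sigma\rgraft_0\tau)=\sum_a\Uparrow\sigma\rgraft_a\Uparrow\tau$, I would again induct on the depth of $\tau$ (following the structure of the definition of $\rgraft_a$). The base cases are $\tau=Y^\mfv\one$ and $\tau=Y^\mfv\Xi$: for $\tau=Y^1\one$ one has $\sigma\rgraft_0 Y^1\one=Y^1\mcI'_0(\sigma)$, and $\Uparrow$ of this is $\sum_a Y^1\mcI'_a(\Uparrow\sigma)=\sum_a\Uparrow\sigma\rgraft_a Y^1\one$; for $\mfv=0$ both sides vanish. For a planted tree $\tau=Y^\mfv\mcI_b(\tau')$ I would use $\sigma\rgraft_0\mcI_b(\tau')=\mcI'_0(\sigma)\mcI_b(\tau')+\mcI_b(\sigma\rgraft_0\tau')$ (in the $\mfv=1$ case; the $\mfv=0$ case only keeps the second term), apply $\Uparrow$, use multiplicativity of $\Uparrow$ on the first summand, the induction hypothesis on the second, and reassemble using the definition of $\rgraft_a$ on $\sum_{b}\mcI_b(\cdot)$. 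The product case $\tau=\tau'\mu$ uses the Leibniz-type rule for $\rgraft_0$ — namely $\sigma\rgraft_0\tau'\mu=\mcI'_0(\sigma)\tau'\mu+\mu(\sigma\rgraft_0^{\text{\tiny{non-root}}}\tau')+\tau'(\sigma\rgraft_0^{\text{\tiny{non-root}}}\mu)$, equivalently $\sigma\rgraft_0(\tau'\mu)=(\sigma\rgraft_0\tau')\mu+\tau'(\sigma\rgraft_0^{\text{\tiny{non-root}}}\mu)$ appropriately bookkept — combined with multiplicativity of $\Uparrow$ and the induction hypothesis applied to both $\tau'$ and $\mu$.

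The main obstacle I anticipate is the product (and planted) case of the second identity: there the grafting $\rgraft_0$ splits into root and non-root parts, and one must carefully track how $\Uparrow$ interacts with this decomposition, verifying that $\Uparrow(\sigma\rgraft_0^{\text{\tiny{root}}}\tau)=\sum_a\Uparrow\sigma\rgraft_a^{\text{\tiny{root}}}\Uparrow\tau$ and likewise for the non-root part separately — this is really the content of the identity and requires a bit of care because the non-root part is itself defined recursively through the subtrees. Everything else is routine unfolding of the recursive definitions, so I would state the root/non-root refinement as an auxiliary claim in the induction and prove the two pieces simultaneously.
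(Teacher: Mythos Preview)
Your proposal is correct and follows essentially the same inductive approach as the paper. One simplification worth noting for the product case of the second identity: the paper avoids your anticipated root/non-root auxiliary claim by writing $\sigma\rgraft_0(\tau\mu)=-\mcI'_0(\sigma)\tau\mu+(\sigma\rgraft_0\tau)\,\mu+\tau\,(\sigma\rgraft_0\mu)$, applying $\Uparrow$ with multiplicativity and the \emph{full}-graft induction hypothesis on $\tau$ and $\mu$, and then reassembling via the same identity for $\rgraft_a$ on $\Uparrow\tau\,\Uparrow\mu$; this keeps the induction hypothesis in its original form and sidesteps the separate root/non-root bookkeeping.
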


\begin{proof}
\textit{First statement}
\begin{equs}
\dmu\Uparrow\mcI_0(\tau)&=\dmu\sum_a\mcI_a(\Uparrow\tau)=\sum_a\mcI'_a(\Uparrow\tau)+\mcI_a(\dmu\Uparrow\tau)\\
&=\sum_a\mcI'_a(\Uparrow\tau)+\mcI_a(\Uparrow\dmu\tau)=\Uparrow(\mcI'_0(\tau)+\mcI_0(\dmu\tau))\\
&=\Uparrow\dmu\mcI_0(\tau).
\end{equs}
and
\begin{equs}
\dmu\Uparrow(\sigma\tau)&=\dmu(\Uparrow\sigma\times\Uparrow\tau)=(\dmu\Uparrow\sigma)\times\Uparrow\tau+\Uparrow\sigma\times(\dmu\Uparrow\tau)\\
&=(\Uparrow\dmu\sigma)\times\Uparrow\tau+\Uparrow\sigma\times(\Uparrow\dmu\tau)=\Uparrow(\dmu\sigma\times\tau+\sigma\times\dmu\tau)\\
&=\Uparrow\dmu(\sigma\tau).
\end{equs}

\noindent\textit{Second statement}

\begin{equs}
\Uparrow(\sigma\rgraft_0\mcI_0(\tau))&=\Uparrow(\mcI'_0(\sigma)\mcI_0(\tau)+\mcI_0(\sigma\rgraft_0\tau))\\
&=\sum_{a,b}\mcI'_a(\Uparrow\sigma)\mcI_b(\Uparrow\tau)+\sum_b\mcI_b(\Uparrow(\sigma\rgraft_0\tau))\\
&=\sum_{a,b}\mcI'_a(\Uparrow\sigma)\mcI_b(\Uparrow\tau)+\sum_{a,b}\mcI_b(\Uparrow\sigma\rgraft_a\Uparrow\tau))\\
&=\sum_{a}\mcI'_a(\Uparrow\sigma)\Uparrow\mcI_0(\tau)+\sum_{a}\Uparrow\sigma\rgraft_a^{\text{\tiny{non-root}}}\left(\sum_b\mcI_b(\Uparrow\tau)\right)\\
&=\sum_a\Uparrow\sigma\rgraft_a\Uparrow\mcI_0(\tau).
\end{equs}
and
\begin{equs}
\Uparrow(\sigma\rgraft_0(\tau\mu))&=\Uparrow(-\mcI'_0(\sigma)\tau\mu+(\sigma\rgraft_0\tau)\times\mu+\tau\times(\sigma\rgraft_0\mu))\\
&=-\sum_a\mcI'_a(\Uparrow\sigma)\Uparrow\tau\Uparrow\mu+\left(\sum_a\Uparrow\sigma\rgraft_a\Uparrow\tau\right)\times\Uparrow\mu\\
&~~~~+\Uparrow\tau\times\left(\sum_a\Uparrow\sigma\rgraft_a\Uparrow\mu\right)\\
&=\sum_a\Uparrow\sigma\rgraft_a(\Uparrow\tau\Uparrow\mu)\\
&=\sum_a\Uparrow\sigma\rgraft_a\Uparrow(\tau\mu).
\end{equs}
\end{proof}

With this two propositions in hand, we can state:

\begin{proposition}\label{flowcoeff}
For a tree $\tau\in T_0^0$ such that the decoration $\mfv$ is $0$ everywhere and that is not an elementary tree, we have
\begin{equation} \label{IBP}
\eval^R\Uparrow\tau=\int_0^\mu\sum_{(\tau)}\sum_aB_{a}(\Pi_{\eps,\nu}^R\Uparrow\tau',\Pi_{\eps,\nu}^R\Uparrow\tau'') d\nu+\Pi_{\eps,0}^R\Uparrow\tau.
\end{equation}
The Sweedler notation used here stands for the coproduct $\Delta_1$.
\end{proposition}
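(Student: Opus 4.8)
The plan is to prove \eqref{IBP} by combining the commutation relation (Proposition \ref{commutation}), the pre-Lie morphism property (Proposition \ref{graft}), and the properties of the auxiliary map $\Uparrow$ established in the preceding lemma, together with the definition of $B_a$ and the support property $\dot G_1 = 0$. The strategy is to show that both sides of \eqref{IBP}, viewed as functions of $\mu$, satisfy the same first-order ODE in $\mu$ with the same value at $\mu = 0$, and then invoke the fundamental theorem of calculus.

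First I would differentiate the left-hand side in $\mu$. By Proposition \ref{commutation} applied to the tree $\Uparrow\tau \in \mcT_0$ (noting $\eval^R$ is the version of $\teval$ that ignores the $\mfv$-decoration, and $\D_\mu$ acts the same way), we get $\D_\mu \eval^R \Uparrow\tau = \eval^R \dmu \Uparrow\tau$. Then I would use the second lemma's first statement, $\dmu \Uparrow = \Uparrow \dmu$, so that $\D_\mu \eval^R \Uparrow \tau = \eval^R \Uparrow \dmu \tau$. Next, since $\tau \in T_0^0$ has all edge decorations equal to $0$ and $\mfv = 0$ everywhere, the abstract flow equation of Proposition \ref{flow} gives $\dmu \tau = (\cdot \graft \cdot)\Delta_1 \tau$; writing $\Delta_1 \tau = \sum_{(\tau)} \tau_1 \otimes \tau_2$ in Sweedler notation, and unpacking the insertion product $\graft$ on $\mcI'(T_0) \times T_0$ (recall $\mcI'_0(\tau_1) \graft \tau_2 = \tau_1 \rgraft_0 \tau_2$), this reads $\dmu \tau = \sum_{(\tau)} \tau_1 \rgraft_0 \tau_2$ after identifying the first leg's planted structure. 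Applying $\Uparrow$ and the second statement of the lemma, $\Uparrow(\sigma \rgraft_0 \tau) = \sum_a \Uparrow\sigma \rgraft_a \Uparrow\tau$, yields $\Uparrow \dmu \tau = \sum_{(\tau)} \sum_a \Uparrow\tau_1 \rgraft_a \Uparrow\tau_2$. Finally, applying $\eval^R$ and invoking Proposition \ref{graft} in its second form (valid because the relevant grafting targets lie in $\mcT_0^*$ — a point to check carefully, since the $\mfv$-decorations produced by $\Delta_1$ and $\Uparrow$ must be compatible with grafting everywhere), we obtain $\D_\mu \eval^R \Uparrow\tau = \sum_{(\tau)} \sum_a B_a(\eval^R \Uparrow\tau_1, \eval^R \Uparrow\tau_2)$, where I am writing $\Pi_{\eps,\nu}^R = \eval^R$ evaluated at scale $\nu$.

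Then I would differentiate the right-hand side of \eqref{IBP}. The term $\Pi_{\eps,0}^R \Uparrow\tau$ is constant in $\mu$, and $\D_\mu \int_0^\mu (\dots) d\nu = \sum_{(\tau)} \sum_a B_a(\Pi_{\eps,\mu}^R \Uparrow\tau', \Pi_{\eps,\mu}^R \Uparrow\tau'')$ by the fundamental theorem of calculus — here one should note that the notation $(\tau', \tau'')$ in the statement is exactly the Sweedler decomposition of $\Delta_1 \tau$, so this matches the derivative of the left-hand side term by term. Hence both sides have the same $\mu$-derivative. For the initial condition at $\mu = 0$: the integral term vanishes, so the right-hand side equals $\Pi_{\eps,0}^R \Uparrow\tau$, and the left-hand side is $\eval^R \Uparrow\tau$ evaluated at $\mu = 0$, which is $\Pi_{\eps,0}^R \Uparrow\tau$ by definition. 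Integrating the ODE from $0$ to $\mu$ then gives \eqref{IBP}.

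The main obstacle I anticipate is not the ODE/integration bookkeeping but the careful verification that Proposition \ref{graft} applies in the form without $\tilde\phi$ (i.e. the $\eval^R$ version requiring the target tree to be graftable everywhere). One must trace through how $\Delta_1$ sets $\mfv = 1$ on the nodes where edges were cut and $\mfv = 0$ elsewhere, and how $\Uparrow$ preserves $\mfv$, to confirm that the grafting $\Uparrow\tau_1 \rgraft_a \Uparrow\tau_2$ indeed only grafts at allowed (blue) nodes so that summing $B_a$ over $a$ reproduces the full graft; alternatively one works with the $\teval$/$\TUpsilon[\cdot][\phi,\tilde\phi]$ version throughout and specializes $\tilde\phi = \phi$ only at the very end, which is cleaner. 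A second minor point requiring care is matching the precise form of the output of $\Delta_1$ (which lands in $\mcT_1^{\mfv=0} \otimes \mcT_0$) with the domain conventions of $\rgraft_a$ and $\Uparrow$ — in particular that the left leg, carrying the single $\mcI'$-type edge, is correctly interpreted so that the identity $\Uparrow(\sigma \rgraft_0 \tau) = \sum_a \Uparrow\sigma \rgraft_a \Uparrow\tau$ from the lemma is being applied with the right $\sigma$. Once these compatibilities are pinned down, the argument is a short and essentially mechanical consequence of the three main tools.
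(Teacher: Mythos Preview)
Your proposal is correct and follows essentially the same route as the paper: apply the fundamental theorem of calculus, then chain together Proposition~\ref{commutation}, the commutation of $\Uparrow$ with $\dmu$, Proposition~\ref{flow}, the identity $\Uparrow(\sigma\rgraft_0\tau)=\sum_a\Uparrow\sigma\rgraft_a\Uparrow\tau$, and finally Proposition~\ref{graft}. Your caution about the $\mfv$-compatibility when invoking Proposition~\ref{graft} is well placed (the paper glosses over it), and your suggested remedy of working with $\teval$ and specialising $\tilde\phi=\phi$ at the end is exactly the clean way to handle it.
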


\begin{proof}

Using the Propositions \ref{flow}, \ref{commutation}, and \ref{graft}, we can write, for a such a tree $\tau$,
\begin{equs}
\eval^R\Uparrow\tau&=\int_0^\mu\D_\mu\Pi_{\eps,\nu}^R\Uparrow\tau d\nu+\Pi_{\eps,0}^R\Uparrow\tau\\
&=\int_0^\mu\Pi_{\eps,\nu}^R(\dmu\Uparrow\tau) d\nu+\Pi_{\eps,0}^R\Uparrow\tau\\
&=\int_0^\mu\Pi_{\eps,\nu}^R(\Uparrow\dmu\tau) d\nu+\Pi_{\eps,0}^R\Uparrow\tau\\
&=\int_0^\mu\sum_{(\tau)}\Pi_{\eps,\nu}^R(\Uparrow(\mcI_{0}(\tau')\graft\tau'')) d\nu+\Pi_{\eps,0}^R\Uparrow\tau\\
&=\int_0^\mu\sum_{(\tau)}\Pi_{\eps,\nu}^R(\Uparrow(\tau'\rgraft_{0}\tau'')) d\nu+\Pi_{\eps,0}^R\Uparrow\tau\\
&=\int_0^\mu\sum_{(\tau)}\sum_a\Pi_{\eps,\nu}^R(\Uparrow\tau'\rgraft_{a}\Uparrow\tau'') d\nu+\Pi_{\eps,0}^R\Uparrow\tau\\
&=\int_0^\mu\sum_{(\tau)}\sum_aB_{a}(\Pi_{\eps,\nu}^R\Uparrow\tau',\Pi_{\eps,\nu}^R\Uparrow\tau'') d\nu+\Pi_{\eps,0}^R\Uparrow\tau.
\end{equs}

We have used the Sweedler notation for a tree $\tau\in T_0^0$,
\begin{equation*}
\Delta_1\tau=\sum_{(\tau)}\mcI_0(\tau')\otimes\tau''.
\end{equation*}
\end{proof}
The content of the proposition is exactly the equation the previous works on the flow approach used to define the force coefficients. Therefore, we show that, in our context, this equation is a consequence of the definition of the evaluation map and not the other way around. Moreover, as the initial conditions (in $\mu=0$) match, it explains why our definition corresponds to the one of the previous works. Note that it is hierarchical in the size of the trees.	One can also note that the initial condition in $0$ vanishes for most of the trees. When it does not, it brings an additional counterterm for the renormalisation.\\
Even though this proposition is not used to proved the main results of the next section, we found it enlightening since it is the backbone of the method. It also shows why the decoration $\mfv$ is important if one wants to write a hierarchical equation. If one does not use this decoration, the following problem appears. Let us take the derivative of a simple tree.
\begin{equs}
\dmu\begin{tikzpicture}[scale=0.2,baseline=0.1cm]
			\node at (0,0)  [dot,label= {[label distance=-0.2em]below: \scriptsize  $      $} ] (root) {};
			\node at (2,3)  [dot,label={[label distance=-0.2em]right: \scriptsize  $  $}] (right) {};
			\node at (-2,3)  [dot,label={[label distance=-0.2em]above: \scriptsize  $ $} ] (left) {};
			\node at (2,5.5)  [dot,label={[label distance=-0.2em]above: \scriptsize  $ $} ] (rightc) {};
			\node at (-2,5.5)  [dot,label={[label distance=-0.2em]above: \scriptsize  $ $} ] (leftc) {};
			\draw[kernel1] (right) to
			node [sloped,below] {\small }     (root); 
			\draw[kernel1] (right) to
			node [sloped,below] {\small }     (rightc); 
			\draw[kernel1] (left) to
			node [sloped,below] {\small }     (root);
			\draw[kernel1] (leftc) to
			node [sloped,below] {\small }     (left);
			\node at (2,4.25) [fill=white,label={[label distance=0em]center: \scriptsize  $ \Xi $} ] () {};
			\node at (-1.25,1.5) [fill=white,label={[label distance=0em]center: \scriptsize  $ 0 $} ] () {};
			\node at (1.25,1.5) [fill=white,label={[label distance=0em]center: \scriptsize   $ 0  $} ] () {};
			\node at (-2,4.25) [fill=white,label={[label distance=0em]center: \scriptsize  $ \Xi $} ] () {};
		\end{tikzpicture}=2\begin{tikzpicture}[scale=0.2,baseline=0.1cm]
			\node at (0,0)  [dot,label= {[label distance=-0.2em]below: \scriptsize  $      $} ] (root) {};
			\node at (2,3)  [dot,label={[label distance=-0.2em]right: \scriptsize  $  $}] (right) {};
			\node at (-2,3)  [dot,label={[label distance=-0.2em]above: \scriptsize  $ $} ] (left) {};
			\node at (2,5.5)  [dot,label={[label distance=-0.2em]above: \scriptsize  $ $} ] (rightc) {};
			\node at (-2,5.5)  [dot,label={[label distance=-0.2em]above: \scriptsize  $ $} ] (leftc) {};
			\draw[kernel1] (right) to
			node [sloped,below] {\small }     (root); 
			\draw[kernel1] (right) to
			node [sloped,below] {\small }     (rightc); 
			\draw[kernel1,color=red] (left) to
			node [sloped,below] {\small }     (root);
			\draw[kernel1] (leftc) to
			node [sloped,below] {\small }     (left);
			\node at (2,4.25) [fill=white,label={[label distance=0em]center: \scriptsize  $ \Xi $} ] () {};
			\node at (-1.25,1.5) [fill=white,label={[label distance=0em]center: \scriptsize  $ 0 $} ] () {};
			\node at (1.25,1.5) [fill=white,label={[label distance=0em]center: \scriptsize   $ 0  $} ] () {};
			\node at (-2,4.25) [fill=white,label={[label distance=0em]center: \scriptsize  $ \Xi $} ] () {};
		\end{tikzpicture},
\end{equs}
which suggests that it is split into two trees via $\Delta_1$: $\begin{tikzpicture}[scale=0.2,baseline=0.1cm]
			\node at (0,0)  [dot,label={[label distance=-0.2em]above: \scriptsize  $ $} ] (left) {};
			\node at (0,2.5)  [dot,label={[label distance=-0.2em]above: \scriptsize  $ $} ] (leftc) {};
			\draw[kernel1] (leftc) to
			node [sloped,below] {\small }     (left);
			\node at (0,1.25) [fill=white,label={[label distance=0em]center: \scriptsize  $ \Xi $} ] () {};
			\end{tikzpicture}$ and $\begin{tikzpicture}[scale=0.2,baseline=0.1cm]
			\node at (0,0)  [dot,label= {[label distance=-0.2em]below: \scriptsize  $      $} ] (root) {};
			\node at (0,3)  [dot,label={[label distance=-0.2em]above: \scriptsize  $ $} ] (left) {};
			\node at (0,5.5)  [dot,label={[label distance=-0.2em]above: \scriptsize  $ $} ] (leftc) {};
			\draw[kernel1] (left) to
			node [sloped,below] {\small }     (root); 
			\draw[kernel1] (leftc) to
			node [sloped,below] {\small }     (left);
			\node at (0,1.5) [fill=white,label={[label distance=0em]center: \scriptsize  $ 0 $} ] () {};
			\node at (0,4.25) [fill=white,label={[label distance=0em]center: \scriptsize  $ \Xi $} ] () {};
		\end{tikzpicture}$. However, when one grafts them back without paying attention to $\mfv$, we get
\begin{equs}
\begin{tikzpicture}[scale=0.2,baseline=0.1cm]
			\node at (0,0)  [dot,label={[label distance=-0.2em]above: \scriptsize  $ $} ] (left) {};
			\node at (0,2.5)  [dot,label={[label distance=-0.2em]above: \scriptsize  $ $} ] (leftc) {};
			\draw[kernel1] (leftc) to
			node [sloped,below] {\small }     (left);
			\node at (0,1.25) [fill=white,label={[label distance=0em]center: \scriptsize  $ \Xi $} ] () {};
			\end{tikzpicture}\rgraft_a\begin{tikzpicture}[scale=0.2,baseline=0.1cm]
			\node at (0,0)  [dot,label= {[label distance=-0.2em]below: \scriptsize  $      $} ] (root) {};
			\node at (0,3)  [dot,label={[label distance=-0.2em]above: \scriptsize  $ $} ] (left) {};
			\node at (0,5.5)  [dot,label={[label distance=-0.2em]above: \scriptsize  $ $} ] (leftc) {};
			\draw[kernel1] (left) to
			node [sloped,below] {\small }     (root); 
			\draw[kernel1] (leftc) to
			node [sloped,below] {\small }     (left);
			\node at (0,1.5) [fill=white,label={[label distance=0em]center: \scriptsize  $ 0 $} ] () {};
			\node at (0,4.25) [fill=white,label={[label distance=0em]center: \scriptsize  $ \Xi $} ] () {};
		\end{tikzpicture}=\begin{tikzpicture}[scale=0.2,baseline=0.1cm]
			\node at (0,0)  [dot,label= {[label distance=-0.2em]below: \scriptsize  $      $} ] (root) {};
			\node at (2,3)  [dot,label={[label distance=-0.2em]right: \scriptsize  $  $}] (right) {};
			\node at (-2,3)  [dot,label={[label distance=-0.2em]above: \scriptsize  $ $} ] (left) {};
			\node at (2,5.5)  [dot,label={[label distance=-0.2em]above: \scriptsize  $ $} ] (rightc) {};
			\node at (-2,5.5)  [dot,label={[label distance=-0.2em]above: \scriptsize  $ $} ] (leftc) {};
			\draw[kernel1] (right) to
			node [sloped,below] {\small }     (root); 
			\draw[kernel1] (right) to
			node [sloped,below] {\small }     (rightc); 
			\draw[kernel1,color=red] (left) to
			node [sloped,below] {\small }     (root);
			\draw[kernel1] (leftc) to
			node [sloped,below] {\small }     (left);
			\node at (2,4.25) [fill=white,label={[label distance=0em]center: \scriptsize  $ \Xi $} ] () {};
			\node at (-1.25,1.5) [fill=white,label={[label distance=0em]center: \scriptsize  $ 0 $} ] () {};
			\node at (1.25,1.5) [fill=white,label={[label distance=0em]center: \scriptsize   $ 0  $} ] () {};
			\node at (-2,4.25) [fill=white,label={[label distance=0em]center: \scriptsize  $ \Xi $} ] () {};
		\end{tikzpicture}+\begin{tikzpicture}[scale=0.2,baseline=0.1cm]
			\node at (0,0)  [dot,label= {[label distance=-0.2em]below: \scriptsize  $      $} ] (root) {};
			\node at (2,3)  [dot,label={[label distance=-0.2em]right: \scriptsize  $  $}] (right) {};
			\node at (0,6)  [dot,label={[label distance=-0.2em]above: \scriptsize  $ $} ] (left) {};
			\node at (3,5)  [dot,label={[label distance=-0.2em]above: \scriptsize  $ $} ] (rightc) {};
			\node at (0,8.5)  [dot,label={[label distance=-0.2em]above: \scriptsize  $ $} ] (leftc) {};
			\draw[kernel1] (right) to
			node [sloped,below] {\small }     (root); 
			\draw[kernel1] (right) to
			node [sloped,below] {\small }     (rightc); 
			\draw[kernel1,color=red] (left) to
			node [sloped,below] {\small }     (right);
			\draw[kernel1] (leftc) to
			node [sloped,below] {\small }     (left);
			\node at (2.5,4) [fill=white,label={[label distance=0em]center: \scriptsize  $ \Xi $} ] () {};
			\node at (1,4.5) [fill=white,label={[label distance=0em]center: \scriptsize  $ 0 $} ] () {};
			\node at (1,1.5) [fill=white,label={[label distance=0em]center: \scriptsize   $ 0  $} ] () {};
			\node at (0,7.25) [fill=white,label={[label distance=0em]center: \scriptsize  $ \Xi $} ] () {};
		\end{tikzpicture},
\end{equs}
and we get a new tree in addition of the original one. We circumvent this problem by adding the decoration $\mfv=1$ when cutting the trees (see Example \ref{exdelta1}). For simpler equations, a choice of ansatz with a rougher combinatorial description, as done for polynomial equations by Duch \cite{Duc21, Duc22}, tackles this issue without requiring to this trick, but then loses the tree-based framework.

\section{Main results}\label{mainresults}

In this section, we present three theorems that are a consequence of the properties developed above, constituting the main results of the paper. 

\subsection{Coherence}\label{coherence}

We turn to showing what we call a coherence property for the method we developed. We choose this designation accordingly to the theory introduced in \cite{BCCH}. In a few words, we explain why the ansatz we set is compatible with the flow equation. On the side of regularity structures, it is the result of Theorem 3.25 in the above-mentioned paper. However, we stress that the satisfied coherence property is not exactly the same. In regularity structures, one wants to check that the ansatz at the level of the modelled distribution representing the solution of the equation is compatible with the one at the level of the right-hand side of the SPDE, when plugged back into the non-linearity. On our side, the result is not as involved, and is also, by far, easier to prove than its counterpart in regularity structures.\\
We show in the following theorem, as a consequence of the Propositions \ref{commutation} and \ref{graft}, themselves coming from the non-recursive definition of $\eval^R$, that the ansatz solves the flow equation.

\begin{definition}
For a given $\gamma>0$, we denote $\mcQ_\gamma$ the projection in $\mcT$ on trees of degree less or equal than $\gamma$. Similarly, we denote $\mcP_\gamma$, defined on functionals, the projection on $\mathrm{Span}(\eval^R\tau,~\deg(\tau)\leq\gamma)$.
\end{definition}
We get the obvious lemma
\begin{lemma}
 We have 
 \begin{equation}
 \mcP_\gamma\eval^R=\eval^R\mcQ_\gamma.
 \end{equation}
\end{lemma}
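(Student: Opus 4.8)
The statement $\mcP_\gamma\eval^R=\eval^R\mcQ_\gamma$ is essentially immediate from the definitions, and the plan is simply to unpack what the two projections do. By definition, $\mcQ_\gamma$ is the linear projection on $\mcT$ that keeps a decorated tree $\tau$ if $\deg(\tau)\leq\gamma$ and sends it to $0$ otherwise; $\mcP_\gamma$ is the linear projection on functionals onto $\mathrm{Span}(\eval^R\tau:\deg(\tau)\leq\gamma)$. So it suffices to check the identity on a basis, i.e.\ on a single tree $\tau\in T_0$. First I would fix such a $\tau$ and split into the two cases $\deg(\tau)\leq\gamma$ and $\deg(\tau)>\gamma$. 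In the first case, $\mcQ_\gamma\tau=\tau$, so the right-hand side is $\eval^R\tau$; and $\eval^R\tau\in\mathrm{Span}(\eval^R\sigma:\deg(\sigma)\leq\gamma)$ trivially (take $\sigma=\tau$), so $\mcP_\gamma$ fixes it and the left-hand side is also $\eval^R\tau$.

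In the second case, $\deg(\tau)>\gamma$, so $\mcQ_\gamma\tau=0$ and the right-hand side vanishes. For the left-hand side one needs $\mcP_\gamma\eval^R\tau=0$, i.e.\ that $\eval^R\tau$ does not have any component along the subspace spanned by the $\eval^R\sigma$ with $\deg(\sigma)\leq\gamma$. The cleanest way to see this is to observe that $\eval^R$ is graded in the following sense: writing $\eval^R\tau=(\teval\tau)[\phi,\phi]$ and using the recursive Definition~\ref{eval}, each of the building blocks ($\Delta_r$, $\Mloc$, the convolutions with $\D^a(G-G_\mu)$ and $\D^a\dot G_\mu$, the elementary differentials through $\ell\TUpsilon$) either preserves or, in the renormalisation branch, strictly raises the degree while the character $\ell$ vanishes on trees of non-negative degree; more to the point, in the ansatz~\eqref{ansatz} the family $(\eval^R\tau)_{\tau\in T_0^*}$ is taken to be (essentially) linearly independent and indexed so that the degree is an intrinsic label. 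Thus the subspaces $\mathrm{Span}(\eval^R\sigma:\deg(\sigma)\leq\gamma)$ and $\mathrm{Span}(\eval^R\sigma:\deg(\sigma)>\gamma)$ are in direct sum, and $\eval^R\tau$ with $\deg(\tau)>\gamma$ lies in the second summand, hence is killed by $\mcP_\gamma$. This gives $\mcP_\gamma\eval^R\tau=0=\eval^R\mcQ_\gamma\tau$, and by linearity the identity holds on all of $\mcT$.

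\textbf{Main obstacle.} There is no real analytic or combinatorial difficulty here; the only thing that needs a word of care is the precise meaning of $\mcP_\gamma$ as a \emph{projection} — this presupposes that $\mathrm{Span}(\eval^R\tau:\deg(\tau)\leq\gamma)$ is a well-defined complemented subspace, which in turn rests on the (tacit) convention that the $\eval^R\tau$ are treated as formally independent symbols graded by $\deg$, exactly as the ansatz~\eqref{ansatz} does. Once that convention is made explicit, the lemma is a one-line consequence of the definitions of $\mcQ_\gamma$ and $\mcP_\gamma$ together with the fact that $\eval^R$ respects the grading by degree. I would therefore present the proof as: unfold both projections on a basis tree, dispatch the case $\deg(\tau)\leq\gamma$ by the identity property of both projections, and dispatch the case $\deg(\tau)>\gamma$ by noting that $\eval^R\tau$ is then, by construction of the ansatz, a combination of evaluation maps of degree $>\gamma$ and hence annihilated by $\mcP_\gamma$.
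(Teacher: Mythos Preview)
Your proposal is correct and matches the paper's treatment: the paper labels this the ``obvious lemma'' and gives no proof, since the identity is immediate from the definitions of $\mcQ_\gamma$ and $\mcP_\gamma$ once the $\eval^R\tau$ are treated as formally independent generators graded by $\deg$. Your observation that the only subtlety lies in the implicit complementation underlying the projection $\mcP_\gamma$ is well taken and is exactly the tacit convention the paper adopts in writing the ansatz~\eqref{ansatz}.
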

We may write, allowing the formal series $F_\mu=F_\mu^\infty=\sum_{\tau\in T_0^*}\frac{1}{S(\tau)}\eval^R\tau$ for a moment, making the truncation clear,
\begin{equation}
\Fem^\gamma=\mcP_\gamma \Fem=\eval^R\mcQ_\gamma\sum_{\tau\in T_0^*}\frac{1}{S(\tau)}\tau.
\end{equation}

We are ready to state the main result of this section.

\begin{theorem} \label{coherence}
The map $F_\mu^\gamma$ satisfies the following truncated version of the Polchinski equation
\begin{equation}\label{truncflow}
\D_\mu\Fem^\gamma+\mcP_{\gamma-\lambda}(D\Fem^\gamma\cdot\dot G_\mu*\Fem^\gamma)=0.
\end{equation}
If moreover the series defining $\Fem^\gamma$ converges as $\gamma$ goes to infinity, $\Fem$ satisfies the true Polchinski equation
\begin{equation}
\partial_{\mu} \Fem  + D \Fem \cdot \dot{G}_{\mu} * \Fem=0.
\end{equation}
\end{theorem}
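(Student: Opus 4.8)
The plan is to derive the truncated flow equation \eqref{truncflow} as a direct consequence of the three structural results already established: Proposition \ref{flow} (the abstract flow equation $\dmu\tau = (\graft)\Delta_1\tau$), Proposition \ref{commutation} (the commutation $\D_\mu\eval^R = \eval^R\dmu$), and Proposition \ref{graft} (the pre-Lie morphism property $B(\eval^R\sigma,\eval^R\tau) = \sum_a \eval^R(\sigma\rgraft_a\tau)$ on $\mcT_0^*$). The series $F_\mu = \sum_{\tau\in T_0^*}\frac{1}{S(\tau)}\eval^R\tau$ is evaluated on trees in $T_0^*$, where grafting is allowed everywhere, which is exactly the hypothesis under which Proposition \ref{graft} applies.

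First I would work at the formal-series level (ignoring truncation), writing $\mathbf{F}_\mu = \eval^R \mathbf{T}$ where $\mathbf{T} = \sum_{\tau\in T_0^*}\frac{1}{S(\tau)}\tau$ is the generating series of trees. Applying Proposition \ref{commutation} gives $\D_\mu \mathbf{F}_\mu = \eval^R \dmu \mathbf{T}$. The key combinatorial step is to show that $\dmu$ applied to $\mathbf{T}$, together with the symmetry factors, reproduces exactly the bilinear grafting structure: using Lemma \ref{combidmu} one rewrites $\dmu\tau = \sum_i \frac{S(\tau)}{S(\tau_i)}\tau_i$, and combined with Proposition \ref{flow} (reading $\dmu\tau = (\graft)\Delta_1\tau$ on the $\mfv=0$ version, then transporting to $T_0^*$ via the marking convention) one gets that $\sum_\tau \frac{1}{S(\tau)}\dmu\tau$ equals, up to the symmetry-factor bookkeeping, $\sum_{\sigma,\tau}\frac{1}{S(\sigma)S(\tau)}\sum_a \sigma\rgraft_a\tau$. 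Applying $\eval^R$ and invoking Proposition \ref{graft} turns the right-hand side into $\sum_{\sigma,\tau}\frac{1}{S(\sigma)S(\tau)} B(\eval^R\sigma,\eval^R\tau) = B(\mathbf{F}_\mu,\mathbf{F}_\mu) = D\mathbf{F}_\mu \cdot (-\dot G_\mu)*\mathbf{F}_\mu$, which is precisely the Polchinski nonlinearity. This yields $\partial_\mu\mathbf{F}_\mu + D\mathbf{F}_\mu\cdot\dot G_\mu*\mathbf{F}_\mu = 0$, giving the second (unconditional-on-convergence) statement.

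For the truncated statement I would apply the projector $\mcQ_\gamma$ and use the lemma $\mcP_\gamma\eval^R = \eval^R\mcQ_\gamma$. The point is that $\D_\mu$ commutes with truncation at level $\gamma$ since $\dmu$ does not decrease degree — in fact $\dmu$ preserves the degree up to subtracting $\lambda$ on one edge, so $\dmu\mcQ_\gamma\tau$ only involves trees that, after grafting, land at degree $\leq \gamma$. More precisely: in the identity $\dmu\tau = \sum_a \mcI'_0(\tau')\graft\tau''$ coming from $\Delta_1$, the tree $\sigma\rgraft_a\tau$ built from pieces of a degree-$\leq\gamma$ tree $\tau$ has the grafting edge carrying an $\mcI'$ decoration, whose degree is $\lambda$ less than the corresponding $\mcI$ edge; hence the relevant contributions to $\D_\mu \Fem^\gamma$ are grafts $B_a(\eval^R\sigma,\eval^R\tau)$ with $\deg(\sigma)+\deg(\tau) \leq \gamma - \lambda$, which is exactly the content of applying $\mcP_{\gamma-\lambda}$ to $D\Fem^\gamma\cdot\dot G_\mu*\Fem^\gamma$. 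I would make this precise by checking that $\D_\mu\Fem^\gamma = \eval^R\dmu\mcQ_\gamma\mathbf{T}$ and that $\dmu\mcQ_\gamma = \mcQ_{\gamma}\dmu$ restricted appropriately, then matching degrees edge by edge.

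The main obstacle I anticipate is the careful bookkeeping of symmetry factors and the transport between the $\mfv=0$ formalism (in which Proposition \ref{flow} is stated, using $\Delta_1:\mcT_0^{\mfv=0}\to\mcT_1^{\mfv=0}\otimes\mcT_0$) and the $T_0^*$ formalism (with $\mfv\equiv1$) in which the ansatz lives and in which Proposition \ref{graft} is stated. One must verify that summing $\frac{1}{S(\tau)}\dmu\tau$ over $T_0^*$ produces precisely the double sum $\sum_{\sigma,\tau}\frac{1}{S(\sigma)S(\tau)}\sum_a\sigma\rgraft_a\tau$ with the correct multiplicities — this is where Lemma \ref{combidmu} and a standard combinatorial identity relating $S(\sigma\rgraft_a\tau)$ to $S(\sigma)$, $S(\tau)$ and the number of grafting positions must be combined. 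The degree-tracking for the truncation, while conceptually transparent, also requires one to confirm that no trees of degree between $\gamma-\lambda$ and $\gamma$ spuriously contribute after the $\mcI\mapsto\mcI'$ degree shift, which is essentially the reason the projector on the nonlinear term is $\mcP_{\gamma-\lambda}$ rather than $\mcP_\gamma$.
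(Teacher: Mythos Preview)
Your proposal is essentially correct and follows the same overall strategy as the paper: reduce via Propositions~\ref{commutation} and~\ref{graft} to a purely combinatorial identity between $\sum_{\tau}\frac{1}{S(\tau)}\dmu\tau$ and $\sum_{\sigma,\tau,a}\frac{1}{S(\sigma)S(\tau)}\sigma\rgraft_a\tau$, then handle the truncation by degree counting.

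The main difference lies in how the combinatorial identity is verified. You propose to go through Proposition~\ref{flow} and its coproduct $\Delta_1$, and you correctly flag the attendant obstacle: Proposition~\ref{flow} is stated on $\mcT_0^{\mfv=0}$, so one would have to transport the identity across the $\mfv$-marking to $T_0^*$ and then carefully combine with Lemma~\ref{combidmu} and a grafting/symmetry-factor identity. The paper sidesteps this entirely by using a duality argument: it pairs both sides of the desired identity against an arbitrary basis element $\dmu^e\sigma\in T_1$ using the inner product $\langle\cdot,\cdot\rangle$, invokes Lemma~\ref{combidmu} for the left-hand side, and invokes Lemma~\ref{duality} (the duality $\langle\tau\rgraft_a\sigma,\mu\rangle = \langle\tau\otimes\sigma,\Delta_a\mu\rangle$) for the right-hand side. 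Since $\Delta_a\dmu^e\sigma$ consists of a single elementary tensor $\sigma_1\otimes\sigma_2$, both inner products evaluate to $1$ and the identity follows without ever touching the $\mfv=0$ formalism. This is shorter and avoids precisely the bookkeeping you identified as the main obstacle; your route would work but is more laborious.

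Your treatment of the truncation is slightly imprecise (for a single graft one has $\deg(\sigma\rgraft_a\tau)=\deg(\sigma)+\deg(\tau)-|a|_\mfs$, not $\deg(\sigma)+\deg(\tau)$), but the underlying mechanism you describe --- that $\dmu$ shifts degree down by exactly $\lambda$, so the image of $\mcQ_\gamma$ under $\dmu$ lands in degree $\leq\gamma-\lambda$, matching the projector $\mcP_{\gamma-\lambda}$ on the nonlinear term --- is the correct one and is what the paper uses implicitly.
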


Before moving to the proof, let us add a few comments on this result. In his original paper \cite{Duc21}, Duch considers an ansatz that is defined by a series instead of a finite sum. It allows to give a version of the method where no remainder appears in the Polchinski equation. It is the choice that most of the works using the flow approach use, as it allows more freedom. This is, in fact, equivalent to truncating the sum by removing the so-called irrelevant terms and having a remainder in the flow equation. As, in this paper, we choose not to tackle in details of the convergence of such series, we prefer the version \eqref{truncflow} of the equation. The content of this theorem is not to be confused with Proposition \ref{flowcoeff}. Indeed, even though they look alike, the latter is backwards in the sense of the size of the trees. More precisely one takes the stochastic coefficient corresponding to a tree and splits it in smaller trees. In the present case one takes the full ansatz and grafts all the trees onto each other, thus increasing the size of the trees. It is also why we do not require the decoration $\mfv$ here.

\begin{proof}
	If we plug the ansatz into the left-hand side of the truncated Polchinski flow equation, we get
	\begin{equation*}
		\D_\mu \sum_{\substack{\tau\in T_0^*\\ \deg(\tau)\leq\gamma}}\frac{(\eval^R\tau)[\phi]}{S(\tau)}+\mcP_{\gamma-\lambda} \sum_{\substack{\tau_1\in T_0^*\\ \deg(\tau_1)\leq\gamma}}\frac{D (\eval^R\tau_1)[\phi]}{S(\tau_1)}\cdot\dot G_\mu*\sum_{\substack{\tau_2\in T_0^*\\ \deg(\tau_2)\leq\gamma}}\frac{(\eval^R\tau_2)[\phi]}{S(\tau_2)},
	\end{equation*}
which can be rewritten as
\begin{equation*}
		\D_\mu \sum_{\substack{\tau\in T_0^*\\ \deg(\tau)\leq\gamma}}\frac{(\eval^R\tau)[\phi]}{S(\tau)}- \mcP_{\gamma-\lambda}\sum_{\substack{\tau_1,\tau_2\in T_0^*\\ \deg(\tau_1)\vee\deg(\tau_2)\leq\gamma}} \sum_{a \in\N^{d+1}}\frac{1}{S(\tau_1)S(\tau_2)}  B_a(\eval^R\tau_2,\eval^R\tau_1).
	\end{equation*}
	Then, from Proposition \ref{commutation} and Proposition \ref{graft}, we rewrite it as
	\begin{equation*}
	\sum_{\substack{\tau\in T_0^*\\ \deg(\tau)\leq\gamma}}\frac{(\eval^R \uparrow_{\mu} \tau)[\phi]}{S(\tau)}- \mcP_{\gamma-\lambda}\sum_{\substack{\tau_1,\tau_2\in T_0^*\\ \deg(\tau_1)\vee\deg(\tau_2)\leq\gamma}} \sum_{a \in\N^{d+1}} \frac{1}{S(\tau_1)S(\tau_2)}  \eval^R(\tau_2\rgraft_a\tau_1).
	\end{equation*}
	
It suffices to show the equality
\begin{equation*}
\sum_{\substack{\tau\in T_0^*\\ \deg(\tau)\leq\gamma}}\frac{\dmu \tau}{S(\tau)}= \mcQ_{\gamma-\lambda}\sum_{\substack{\tau_1,\tau_2\in T_0^*\\ \deg(\tau_1)\vee\deg(\tau_2)\leq\gamma}} \sum_{a \in\N^{d+1}} \frac{\tau_2\rgraft_a\tau_1}{S(\tau_1)S(\tau_2)} .
\end{equation*}
Let us fix an arbitrary $\sigma\in T_0^*$ such that $\sigma\neq\one,\Xi$ and $\deg(\sigma)\leq\gamma$, and $e\in E_\sigma$. This choice of $\sigma$ prevents the use of indicator functions on the degree in the sequel. We get, on the left-hand side, using Lemma \ref{combidmu},
\begin{equation*}
\left\langle\sum_{\substack{\tau\in T_0^*\\ \deg(\tau)\leq\gamma}}\frac{\dmu \tau}{S(\tau)},\dmu^e\sigma\right\rangle=\left\langle\frac{\dmu\sigma}{S(\sigma)},\dmu^e\sigma\right\rangle=\frac{1}{S(\sigma)}\frac{S(\sigma)}{S(\dmu^e\sigma)}\langle\dmu^e\sigma,\dmu^e\sigma\rangle=1.
\end{equation*}
On the right-hand side, we have, using Lemma \ref{duality},
\begin{equs}
~&\left\langle\mcQ_{\gamma-\lambda}\sum_{\substack{\tau_1,\tau_2\in T_0^*\\ \deg(\tau_1)\vee\deg(\tau_2)\leq\gamma}}\sum_{a \in\N^{d+1}}  \frac{\tau_2\rgraft_a\tau_1}{S(\tau_1)S(\tau_2)},\dmu^e\sigma\right\rangle\\
&=\sum_{\substack{\tau_1,\tau_2\in T_0^*\\ \deg(\tau_1)\vee\deg(\tau_2)\leq\gamma}} \sum_{a \in\N^{d+1}} \frac{1}{S(\tau_1)S(\tau_2)} \langle\tau_2\rgraft_a\tau_1,\dmu^e\sigma\rangle\\
&=\sum_{\substack{\tau_1,\tau_2\in T_0^*\\ \deg(\tau_1)\vee\deg(\tau_2)\leq\gamma}} \sum_{a \in\N^{d+1}} \frac{1}{S(\tau_1)S(\tau_2)} \langle\tau_2\otimes\tau_1,\Delta_a\dmu^e\sigma\rangle.
\end{equs}
One can then write $\Delta_a\dmu^e\sigma=\one_{a=a^*}\sigma_1\otimes\sigma_2$ for a unique couple $(\sigma_1,\sigma_2)\in T_0\times T_0$ and $a^*=\mfe(e)$. It yields
\begin{align*}
\left\langle\sum_{\substack{\tau_1,\tau_2\in T_0^*\\ \deg(\tau_1)\vee\deg(\tau_2)\leq\gamma}} \sum_{a \in\N^{d+1}} \frac{\tau_2\rgraft_a\tau_1}{S(\tau_1)S(\tau_2)},\dmu^e\sigma\right\rangle=\frac{S(\sigma_1)S(\sigma_2)}{S(\sigma_1)S(\sigma_2)}=1,
\end{align*}
which concludes the proof.
\end{proof}

\subsection{Renormalisation at the level of the SPDE}\label{backspde}

In this section, we explain how the renormalisation procedures we have developed in the previous sections come back at the level of the SPDE \eqref{maineq}. It is the counterpart of one of the main results of \cite{BCCH} in the context of regularity structures. Nevertheless, the method of proof is by far simpler in the flow approach, as we use quite elementary arguments. Note that this result has also been proved with a recursive definition of the models with a duality argument in \cite{BB21b}. We choose not to use duality as, on the contrary of recursive models in regularity structures, we do not have a clear cut between the algebraic renormalisation and the evaluation of trees. Both procedures are intertwined. This simplification is noteworthy and advocating in favour of the flow approach. We forget for this section the decoration $\mfv$, since it will be set to $1$ everywhere and thus will not play any role. In other words, grafts can occur on every node, accordingly to the rules set in Section \ref{trees}. We start this section with the intuitive (in the view of Example \ref{exMloc}) lemma.

\begin{lemma} \label{localisation_prop}
	For a tree $\tau\in T_0$, there is a sequence of trees of $T_0^X$ such that
\begin{equation}
	\Mloc\tau = \sum_{i} \frac{S(\tau)}{S(\tau_i)}\tau_i.
\end{equation}
\end{lemma}

\begin{proof}
	We proceed by induction on the construction of the decroated tree $ \tau $. One has, for the base case,
	\begin{equs}
		\Mloc \zeta_l =  \sum_{k \in  \N^{d+1}} \frac{X^k}{k!} \zeta_l = \sum_{k \in  \N^{d+1}}  \frac{X^k \zeta_l}{S(X^k \zeta_l)}. 
	\end{equs}
Since $ S(X^k \zeta_l  ) = k! $, the base case is tackled. Moving to the induction part, one has, for a tree $\tau= \zeta_l\prod_{i=1}^n\mcI_{a_i}(\tau_i)^{\beta_i}$ with the $ \mcI_{a_i}(\tau_i) $ pairwise distinct,
\begin{equs}
	\Mloc\tau=M_{\text{\tiny{loc}}} \zeta_l\prod_{i=1}^n\mcI_{a_i}(\tau_i)^{\beta_i} =  \sum_{k \in \N^{d+1}} \frac{X^k}{k!} \zeta_l\prod_{i=1}^n\mcI_{a_i}( M_{\text{\tiny{loc}}} \tau_i)^{\beta_i}.
\end{equs}
Using the definition of $S$, we can write
\begin{equs}
	S\left(\zeta_l\prod_{i=1}^n\mcI_{a_i}(\tau_i)^{\beta_i}\right) = \prod_{i=1}^n  S(\tau_i)^{\beta_i}\beta_i !,
\end{equs}
which yields
\begin{equation*}
\frac{M_{\text{\tiny{loc}}} \zeta_l\prod_{i=1}^n\mcI_{a_i}(\tau_i)^{\beta_i}}{S\left(\zeta_l\prod_{i=1}^n\mcI_{a_i}(\tau_i)^{\beta_i}\right)}=\sum_{k \in \N^{d+1}} \frac{X^k}{k!} \zeta_l\prod_{i=1}^n\frac{1}{\beta_i!}\mcI_{a_i}\left(\frac{ M_{\text{\tiny{loc}}} \tau_i}{S(\tau_i)}\right)^{\beta_i}.
\end{equation*}
For each $\tau_i$, we write the induction hypothesis as
\begin{equs}	\frac{M_{\text{\tiny{loc}}} \tau_i}{S(\tau_i)} = \sum_{j} \frac{\tau_{i,j}}{S(\tau_{i,j})}.
\end{equs}
Then, one can observe, using the multinomial formula,
\begin{equs}
 \frac{1}{\beta_i ! } \mcI_{a_i}\Biggl(\sum_j\frac{\tau_{i,j}}{S(\tau_{i,j})}\Biggl)^{\beta_i} = \sum_{\sum_{j}\beta_{i,j}  = \beta_i} \prod_{j} \frac{1}{\beta_{i,j} !}\left(\frac{\CI_{a_i}(\tau_{i,j})}{S(\tau_{i,j})}\right)^{\beta_{i,j}}.
\end{equs}
We can then write successively, using two times a pairwise distinctness argument with the symmetry factors,
\begin{equs}
\Mloc\tau&=\sum_{k \in \N^{d+1}} \frac{X^k}{k!} \zeta_l\prod_{i=1}^n\sum_{\sum_{j}\beta_{i,j}  = \beta_i} \prod_{j} \frac{1}{\beta_{i,j} !}\left(\frac{\CI_{a_i}(\tau_{i,j})}{S(\tau_{i,j})}\right)^{\beta_{i,j}}\\
&=\sum_{k \in \N^{d+1}} \frac{X^k}{k!} \zeta_l\prod_{i=1}^n\sum_{\sum_{j}\beta_{i,j}  = \beta_i} \frac{ \prod_{j}\CI_{a_i}(\tau_{i,j})^{\beta_{i,j}}}{S(\prod_j\mcI_{a_i}(\tau_{i,j})^{\beta_{i,j}})}\\
&=\sum_{k \in \N^{d+1}} \frac{X^k}{k!} \zeta_l\sum_{\forall i,~\sum_{j}\beta_{i,j}  = \beta_i}\prod_{i=1}^n\frac{ \prod_{j}\CI_{a_i}(\tau_{i,j})^{\beta_{i,j}}}{S(\prod_j\mcI_{a_i}(\tau_{i,j})^{\beta_{i,j}})}\\
&=\sum_{k \in \N^{d+1}} \frac{X^k}{k!} \zeta_l\sum_{\forall i,~\sum_{j}\beta_{i,j}  = \beta_i}\frac{ \prod_{i,j}\CI_{a_i}(\tau_{i,j})^{\beta_{i,j}}}{S(\prod_{i,j}\mcI_{a_i}(\tau_{i,j})^{\beta_{i,j}})},
\end{equs}
which concludes the proof.
\end{proof}

We can now turn to the main theorem of this section, whose proof is quite elementary.

\begin{theorem} \label{renormalised_equation}
We have, for any $\eps\in (0,1]$,
\begin{equation}
\sum_{\tau\in T_0^*}\frac{1}{S(\tau)}\Pi_{\eps,0}^R\tau=\sum_{\sigma\in T_0^{X,-,*}}\frac{\ell(\sigma)}{S(\sigma)}\Upsilon[\sigma],
\end{equation}
so that $\phi_{\eps,0}$ solves the SPDE
\begin{equation}
L\phi=g(x,\phi,\dots,\nabla^q\phi)+f(\phi,\dots,\nabla^p\phi)\xi+\sum_{\sigma\in T_0^{X,-,*}\backslash \{\one,\Xi\}}\frac{\ell(\sigma)}{S(\sigma)}\Upsilon[\sigma][\phi].
\end{equation}
\end{theorem}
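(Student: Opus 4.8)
The plan is to evaluate the evaluation map $\Pi_{\eps,0}^R$ at $\mu=0$, where $G_0=G$ so that $G-G_0=0$. The key observation is that in Definition~\ref{eval}, $\heval\mcI_a(\tau)=\D^a(G-G_\mu)*\eval^R\tau$ vanishes identically when $\mu=0$; the same holds for $\heval\mcI'_a(\tau)=-\D^a\dot G_0*\eval^R\tau$ since $\dot G_1=0$ has no bearing here but in fact $\dot G_0$ also need not vanish — rather, the trees in $T_0^*$ contain no $\mcI'$ edges by definition, so only the $\mcI_a$ case is relevant. Consequently, for a tree $\tau\in T_0^*$ with at least one edge, $\heval\tau$ evaluated at $\mu=0$ annihilates the branch, meaning that in $\teval\tau=(\ell\TUpsilon\otimes\heval)(\Mloc\otimes\mathrm{id})\Delta_r$ the only surviving term of $\Delta_r\tau$ is the one where the entire tree is extracted, i.e. the term $\tau^{\mathrm{ext}}\otimes\one$ in Sweedler's notation (plus the $\Xi\otimes\one$ when there is a noise at the top). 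For the full tree extraction, $\heval\one=1$, and $\ell\TUpsilon[\Mloc\tau]$ is what remains. I would make this precise by an induction on the size of the tree, showing $\Pi_{\eps,0}^R\tau=\ell\TUpsilon[\Mloc\tau]=\ell\Upsilon[\Mloc\tau][\phi]$ (the $\mfv$-decoration being irrelevant for $\Upsilon$ as noted after \eqref{defupsilon}).

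Next I would expand $\Mloc\tau$ as a sum of decorated trees $\sum_i a_i\tau_i$ with $\tau_i\in T_0^{X,*}$, and combine this with the crucial combinatorial Lemma~\ref{localisation_prop}, which states $\Mloc\tau/S(\tau)=\sum_i\tau_i/S(\tau_i)$. Summing over $\tau\in T_0^*$ then gives
\begin{equs}
\sum_{\tau\in T_0^*}\frac{1}{S(\tau)}\Pi_{\eps,0}^R\tau=\sum_{\tau\in T_0^*}\frac{\ell\Upsilon[\Mloc\tau]}{S(\tau)}=\sum_{\tau\in T_0^*}\sum_i\frac{\ell(\tau_i)}{S(\tau_i)}\Upsilon[\tau_i],
\end{equs}
using that $\ell\TUpsilon[\cdot]$ is linear and that $\downg$ acts trivially on trees of $T_0^{X,*}$ (no grey symbols), so $\ell\TUpsilon[\tau_i]=\ell(\tau_i)\TUpsilon[\tau_i]$. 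The remaining point is to identify the set of trees produced: as $\tau$ ranges over $T_0^*$ and one applies $\Mloc$, one obtains precisely all trees in $T_0^{X,*}$ (every polynomial decoration on nodes arises from exactly one Taylor monomial of one seed tree in $T_0^*$), so the double sum reindexes as a sum over $\sigma\in T_0^{X,*}$. Since $\ell$ vanishes on trees of non-negative degree, only $\sigma\in T_0^{X,-,*}$ contribute, giving the first displayed identity. For the second, one subtracts off the contributions of $\one$ and $\Xi$: $\ell(\one)=1$ gives $\Upsilon[\one]=g(x,\phi,\dots,\nabla^q\phi)$ and, writing $\ell(\Xi)$ together with the $\mcI$-free part, the $\Xi$-contribution reproduces $f(\phi,\dots,\nabla^p\phi)\xi$ after recalling the mollified noise and that $\xi_\eps\to\xi$ is not needed here since we work at fixed $\eps$ — one should be slightly careful and simply note $\Pi_{\eps,0}^R\Xi=f(\phi,\dots)\xi_\eps+(\text{counterterms in }\ell(X^k\Xi))$, and absorb the counterterms into the $\sigma$-sum. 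Then $\phi_{\eps,0}=\varphi_{\eps,0}$ solves $L\phi=\sum_{\tau\in T_0^*}\frac{1}{S(\tau)}\Pi^R_{\eps,0}\tau[\phi]$ by construction (\eqref{ansatz} with $\mu=0$, $F_{\eps,0}=F_\eps$, and $\phi_\eps=G*F_\eps[\phi_\eps]$), which rearranges into the stated renormalised SPDE.

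The main obstacle I anticipate is the bookkeeping in the base-case induction step: one must carefully check that at $\mu=0$ the terms of $\Delta_r\tau$ other than the full extraction truly drop out. This requires verifying that for any non-trivial Sweedler term $\tau_1\otimes\tau_2$ with $\tau_2\neq\one$, the factor $\heval\tau_2$ contains at least one convolution against $G-G_0=0$ (since $\tau_2$ retains at least one $\mcI$-edge or a $\Xi$, and a lone $\Xi$ comes paired in the $\Xi\otimes\one$ term not the $\Xi\otimes\Xi$ one, which does vanish against... actually $\heval\Xi=\xi_\eps\neq0$, so here one must instead argue that the $\Xig\otimes\Xi$ term is killed because $\ell$ would have to be applied to a tree containing a grey symbol which, after $\downg$, has the wrong structure — more precisely $\downg$ of the grey-only left factor is a pure polynomial of degree $0$, on which $\ell$ vanishes unless it is $\one$, and $\ell(\one)$ multiplies $\heval\Xi$ giving exactly the wanted non-renormalised noise term). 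So the genuinely delicate part is tracking which extracted pieces survive the character $\ell$ versus which are annihilated by the vanishing kernel $G-G_0$, and confirming the two mechanisms together leave exactly $\ell\Upsilon[\Mloc\tau]$. A secondary, more routine obstacle is making the reindexing $\bigsqcup_{\tau\in T_0^*}\{\tau_i\}\leftrightarrow T_0^{X,*}$ rigorous, i.e. that the map ``forget polynomial decorations'' is a bijection-compatible partition with multiplicities matched exactly by Lemma~\ref{localisation_prop} — but this is precisely what that lemma is designed to deliver, so invoking it should suffice.
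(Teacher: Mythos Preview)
Your proposal is correct and follows essentially the same route as the paper: both exploit $G-G_0=0$ so that $\Pi_{\eps,0}^{R\times}$ survives only on $\one$ and $\Xi$, reducing $\Pi_{\eps,0}^R\tau$ to $\ell\Upsilon[\Mloc\tau]$, and then invoke Lemma~\ref{localisation_prop} to reindex the double sum over $T_0^*$ and the Taylor terms as a single sum over $T_0^{X,-,*}$. The paper dispatches the $\tau_2=\Xi$ bookkeeping more tersely by invoking the convention that $\mcI_a$ vanishes on trees composed only of grey symbols and on $\one$, which is precisely the mechanism you should use to sharpen the obstacle you flag in your last paragraph.
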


\begin{proof}
Let $\tau\in T_0^*$. We write, in Sweedler's notation, as well as Lemma \ref{localisation_prop},
\begin{equation*}
\Pi_{\eps,0}^R\tau=\sum_{(\tau)}\sum_i\frac{S(\tau_1)}{S(\tau_1^i)}\ell(\tau_1^i)\Upsilon[\tau_1^i]\Pi_{\eps,0}^{R\times}\tau_2.
\end{equation*}
One can then note that $\Pi_{\eps,0}^{R\times}$ vanishes on every tree excepted $\one$ and $\Xi$, since $G-G_0=0$. As we have imposed that $\mcI_a$ vanishes on trees composed only of grey symbols and $\one$, the case $\tau_2=\Xi$ only lets the possibility $\tau=\Xi$. For $\tau\neq\Xi$, we then get, noting that for $\tau\in \mcT_0^*$, one can write $\Delta_r\tau=\tau\otimes\one+\sum_{(\tau)}\tau_1\otimes\tau_2$, with $\tau_2\neq \one,\Xi$,
\begin{equation*}
\Pi_{\eps,0}^R\tau=\sum_i\frac{S(\tau)}{S(\tau^i)}\ell(\tau^i)\Upsilon[\tau^i]\Pi_{\eps,0}^{R\times}\one.
\end{equation*}
Then,
\begin{equation*}
\sum_{\tau\in T_0^*}\frac{1}{S(\tau)}\Pi_{\eps,0}^R\tau=\sum_{\tau\in T_0^*}\sum_i\frac{\ell(\tau^i)}{S(\tau^i)}\Upsilon[\tau^i]=\sum_{\sigma\in T_0^{X,-,*}}\frac{\ell(\sigma)}{S(\sigma)}\Upsilon[\sigma].
\end{equation*}
A simple rewriting gives the result at the level of the SPDE.
\end{proof}

\begin{remark}
One can check that the counterterm is exactly the same as in regularity structures in \cite[Theorem 2.22]{BCCH}, which was, of course, expected, but is found through different techniques.
\end{remark}

\subsection{BPHZ Renormalisation}\label{bphz}

In this section, we explain how to choose the character $\ell$. For this purpose, we need to localize our evaluation map $\eval^R$. Once this step is done, it allows us to compare the renormalisation in this context with the BPHZ renormalisation in regularity structures developed in \cite{BHZ}.\\
For the rest of this section, we ignore the decoration $\mfv$, as it is of no use here. We adapt our notations accordingly. We also warn that the proofs of this section are somewhat tedious and technical, so that the proofs might be skipped for a first reading in order to grasp the main ideas. We define what we call a local evaluation map, that is, up to the parameter $\mu$, identical to renomalised pre-models in regularity structures. The term \textit{local} refers here to the fact that it depends on a given base point.

\begin{definition}[Local evaluation map]
We define inductively on $\mcT_0$ a linear map, depending as usual on the parameters $\eps$ and $\mu$, as well as a new parameter $x\in[0,1]\times\T^d$ that is the base point of the localization. We first define
\begin{equation}
(\hlocevalx X^k)(y)=(y-x)^k~~\mathrm{and}~~(\hlocevalx\Xi)(y)=\xi_\eps(y).
\end{equation}
Then inductively
\begin{equation}
\hlocevalx(\tau\sigma)=\hlocevalx\tau\times\hlocevalx\sigma,
\end{equation}
and
\begin{equation}
\hlocevalx\mcI_a(\tau)=\D^a(G-G_\mu)*\locevalx\tau,
\end{equation}
as well as the renormalisation procedure
\begin{equation}\label{prepmap}
\locevalx\tau=(\ell\otimes\hlocevalx)\Delta_r.
\end{equation}

Finally, we define the base-point-independent local evaluation map
\begin{equation}
(\loceval\tau)(y)=(\hat\Pi_{\eps,\mu,y}^R\tau)(y).
\end{equation}

\end{definition}

Note in this definition that we use the renormalisation through preparation maps introduced in \cite{BR18}. It is easily seen in equation \eqref{prepmap}, since, in the context of inductive definitions in regularity structures, one defines a linear map $R=(\ell\otimes\mathrm{id})\delta_r$ and applies after the model. Another important remark is that the definition above matches the one of pre-models but not the one of models. In the present case, we do not have any recentering. This step is rather dealt with by the flow on $\mu$.\\
We show an algebraic relation between the localization map $\Mloc$ and the coproduct $\Delta_r$. It is of interest as it is, up to the projection $\pi$, a property of comorphism.

\begin{proposition}\label{Mlocdelta}
It holds on $\mcT_0$ that 
\begin{equation}
(\Mloc\otimes\pi\Mloc)\Delta_r=(\mathrm{id}\otimes\pi)\Delta_r\Mloc,
\end{equation}
where $\pi$ is a projection defined on $T_0^X$ by
\begin{equation}
\pi \zeta_l X^k\prod_{i=1}^n\mcI_{a_i}(\tau_i)=\one_{k=0}\zeta_l\prod_{i=1}^n\mcI_{a_i}(\tau_i),
\end{equation}
and is extended by linearity to $\mcT_0^X$. It acts by removing the trees that have polynomials at the root.
\end{proposition}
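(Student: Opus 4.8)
The statement is a commutation/comorphism-type identity between the localisation map $\Mloc$ and the extraction coproduct $\Delta_r$, twisted by the projection $\pi$ on the right-hand factor. The natural strategy is induction on the depth of the decorated tree, exactly as in the proofs of Propositions \ref{delta_r_mu} and \ref{lemmagraft}. I would first record the three structural ingredients that make the induction work: $\Mloc$ is multiplicative for the tree product, $\Delta_r$ is multiplicative ($\Delta_r(\sigma\tau)=\Delta_r\sigma\,\Delta_r\tau$), and $\pi$ is also multiplicative on $\mcT_0^X$ (it keeps only the terms with vanishing polynomial decoration at the root, and a product of trees has polynomial decoration $0$ at its root iff each factor does). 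Granting these, the product case reduces to the planted case: applying the induction hypothesis to $\sigma$ and $\tau$ separately and multiplying, one gets $(\Mloc\otimes\pi\Mloc)\Delta_r(\sigma\tau)=\big((\mathrm{id}\otimes\pi)\Delta_r\Mloc\sigma\big)\big((\mathrm{id}\otimes\pi)\Delta_r\Mloc\tau\big)=(\mathrm{id}\otimes\pi)\Delta_r\Mloc(\sigma\tau)$, using multiplicativity of all four maps involved.

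\textbf{Base cases and the planted case.} For $\tau=\zeta_l$ (i.e. $\one$ or $\Xi$): one computes both sides directly. For $\one$, $\Delta_r\one=\one\otimes\one$ and $\Mloc\one=\sum_k \frac{X^k}{k!}$; on the right, $\pi$ kills all $X^k$ with $k\neq 0$, so $(\mathrm{id}\otimes\pi)\Delta_r\Mloc\one = \sum_k\frac{X^k}{k!}\otimes\one$, while on the left $\pi\Mloc$ applied to the right leg gives the same. For $\Xi$, $\Delta_r\Xi=\Xig\otimes\Xi+\Xi\otimes\one$ and one checks the identity survives, since $\pi$ acts trivially on these (no polynomial at the root). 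The heart of the induction is the planted tree $\mcI_a(\tau)$. Here I would expand $\Mloc\mcI_a(\tau)=\sum_{k}\frac{X^k}{k!}\mcI_a(\Mloc\tau)$, then apply $\Delta_r$ using $\Delta_r\mcI_a(\sigma)=(\mcI_a\otimes\mathrm{id})\Delta_r\sigma+\sum_m\frac{1}{m!}\CIg_a(X^m)\otimes\mcI_{a+m}(\sigma)$ and $\Delta_r X^k=(\one\otimes X^k+\cdots)$, i.e. the Leibniz-type formula $\Delta_r(X^k\eta)=\Delta_r X^k\,\Delta_r\eta$. On the other side, $(\Mloc\otimes\pi\Mloc)\Delta_r\mcI_a(\tau)$ expands with the same two terms of $\Delta_r\mcI_a$; the key point is that $\pi$ applied to the second leg of the first term $\mcI_a(\tau)\otimes(\cdots)$ — which carries $X^k$'s produced by $\Mloc$ — is what has to match the explicit $\sum_k\frac{X^k}{k!}$ factor coming from $\Mloc\mcI_a(\tau)$ on the right. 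This bookkeeping of where the polynomial $X^k$ lives (at the new root after extraction vs.\ absorbed by $\Mloc$) and how $\pi$ selects $k=0$ on the quotiented leg is precisely the content of the identity, and mirrors the role of $\pi$ in \cite{BHZ}.

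\textbf{The main obstacle.} The delicate step will be the planted case and, specifically, reconciling the infinite sums: both $\Mloc$ and $\Delta_r$ (through its $\CIg_a(X^k)$ terms) produce sums over $\N^{d+1}$, and one must reindex carefully — as was done in Lemma \ref{commutggraft} — so that the $X^k$ generated by $\Mloc$ on the contracted root, combined with the $\CIg_a(X^k)$ already present, match the $\Mloc$ applied after $\Delta_r$. The projection $\pi$ is essential here because $(\Mloc\otimes\Mloc)\Delta_r$ would generate, on the right leg, polynomial decorations at the root that are \emph{not} present in $\Delta_r\Mloc$ (where $\Mloc$ is applied to a tree that already had its root polynomial fixed by the uncontracted part); composing with $\pi$ on the right removes exactly this discrepancy. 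I would make the reindexing rigorous by the same bigrading argument invoked after the definition of $\Mloc$, so that all manipulations of the a priori infinite sums are legitimate. Once the planted case is settled, the product case is immediate by multiplicativity as above, completing the induction.
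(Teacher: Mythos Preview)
Your proposal is correct and follows essentially the same approach as the paper's own proof: induction on the depth of the tree, with the base cases $\one$ and $\Xi$ computed directly, the planted case $\mcI_a(\tau)$ handled by expanding both sides and reindexing the double sums over $\N^{d+1}$ (using $\Delta_r X^k$ via the binomial formula and the induction hypothesis), and the product case reduced to the previous ones by multiplicativity of $\Mloc$, $\Delta_r$, and $\pi$. The paper writes out the planted-case computation in full rather than sketching it, but the structure and the key observation about the role of $\pi$ are exactly as you describe.
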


\begin{proof} We proceed by induction. We start with the two base cases. We have first
\begin{equs}
(\mathrm{id}\otimes\pi)\Delta_r\Mloc\one&=(\mathrm{id}\otimes\pi)\Delta_r\sum_{k\in\N^{d+1}}\frac{X^k}{k!}=(\mathrm{id}\otimes\pi)\sum_k\frac{1}{k!}\sum_j\binom{k}{j}X^j\otimes X^{k-j}\\
&=(\mathrm{id}\otimes\pi)\left(\sum_{k\in\N^{d+1}}\frac{X^k}{k!}\otimes\sum_{j\in\N^{d+1}}\frac{X^j}{j!}\right)=\sum_{k\in\N^{d+1}}\frac{X^k}{k!}\otimes\one\\
&=(\Mloc\otimes\pi\Mloc)\one\otimes\one=(\Mloc\otimes\pi\Mloc)\Delta_r\one.
\end{equs}
The other base case on $\Xi$ is treated identically. We have then, for a tree of the form $\mcI_a(\tau)$,
\begin{equs}
&(\mathrm{id}\otimes\pi)\Delta_r\Mloc\mcI_a(\tau)=(\mathrm{id}\otimes\pi)\Delta_r\sum_{k\in\N^{d+1}}\frac{X^k}{k!}\mcI_a(\Mloc\tau)\\
&=(\mathrm{id}\otimes\pi)\left(\sum_{k\in\N^{d+1}}\frac{1}{k!}\sum_{j\in\N^{d+1}}\binom{k}{j}X^j\otimes X^{k-j}\right)\times\Delta_r\mcI_a(\Mloc\tau)\\
&=(\mathrm{id}\otimes\pi)\left(\sum_{k\in\N^{d+1}}\frac{X^k}{k!}\otimes\sum_{j\in\N^{d+1}}\frac{X^j}{j!}\right)\\
&~~~~\times\left((\mcI_a\otimes\mathrm{id})\Delta_r\Mloc\tau+\sum_{i\in\N^{d+1}}\frac{1}{i!}{\color{gray}\mcI}_a(X^i)\otimes\mcI_{a+i}(\Mloc\tau)\right)\\
&=\left(\sum_{k\in\N^{d+1}}\frac{X^k}{k!}\mcI_a\otimes\mathrm{id}\right)(\mathrm{id}\otimes\pi)\Delta_r\Mloc\tau\\
&~~~~+(\Mloc\otimes\mathrm{id})\sum_{i\in\N^{d+1}}\frac{1}{i!}{\color{gray}\mcI}_a(X^i)\otimes\mcI_{a+i}(\Mloc\tau)\\
&=\left(\sum_{k\in\N^{d+1}}\frac{X^k}{k!}\mcI_a\otimes\mathrm{id}\right)(\Mloc\otimes\pi\Mloc)\Delta_r\tau\\
&~~~~+\left(\Mloc\otimes\pi\sum_{j\in\N^{d+1}}\frac{X^j}{j!}\right)\sum_{i\in\N^{d+1}}\frac{1}{i!}{\color{gray}\mcI}_a(X^i)\otimes\mcI_{a+i}(\Mloc\tau)\\
&=(\Mloc\mcI_a\otimes\pi\Mloc)\Delta_r\tau+(\Mloc\otimes\pi\Mloc)\sum_{i\in\N^{d+1}}\frac{1}{i!}{\color{gray}\mcI}_a(X^i)\otimes\mcI_{a+i}(\tau)\\
&=(\Mloc\otimes\pi\Mloc)\left((\mcI_a\otimes\mathrm{id})\Delta_r\tau+\sum_{i\in\N^{d+1}}\frac{1}{i!}{\color{gray}\mcI}_a(X^i)\otimes\mcI_{a+i}(\tau)\right)\\
&=(\Mloc\otimes\pi\Mloc)\Delta_r\mcI_a(\tau).
\end{equs}
For the product, we use the fact that $\pi$, $\Delta_r$, and $\Mloc$ are all multiplicative maps.
\begin{equs}
(\mathrm{id}\otimes\pi)\Delta_r\Mloc(\sigma\tau)&=(\mathrm{id}\otimes\pi)\Delta_r(\Mloc\sigma\times\Mloc\tau)\\
&=(\mathrm{id}\otimes\pi)(\Delta_r\Mloc\sigma)\times(\Delta_r\Mloc\tau)\\
&=(\mathrm{id}\otimes\pi)(\Delta_r\Mloc\sigma)\times(\mathrm{id}\otimes\pi)(\Delta_r\Mloc\tau)\\
&=(\Mloc\otimes\pi\Mloc)\Delta_r\sigma\times(\Mloc\otimes\pi\Mloc)\Delta_r\tau\\
&=(\Mloc\otimes\pi\Mloc)(\Delta_r\sigma\times\Delta_r\tau)\\
&=(\Mloc\otimes\pi\Mloc)\Delta_r(\sigma\tau).
\end{equs}
\end{proof}

\begin{definition}
We define on $\mcT_0^X$ an elementary differential acting on all the nodes but not the one at the root. We set for elementary trees $\hat\Upsilon[X^k\zeta_l]=1$ and 
\begin{equation}
\hat\Upsilon\left[ \zeta_l X^k\prod_{i=1}^n\mcI_{a_i}(\tau_i)\right]=\prod_{i=1}^n\Upsilon[\tau_i].
\end{equation}
\end{definition}

From this definition, we can prove the following lemma. We advise the reader to check on Example \ref{exdeltar} that it holds on simple cases, for intuition.

\begin{lemma}\label{lemmaupsilondelta}
Let $\tau\in T_0^X$, writing in Sweedler's notation $\Delta_r\tau=\sum_{(\tau)}\tau_1\otimes\tau_2$, we have
\begin{equation}
\Upsilon[\tau_1]\hat\Upsilon[\tau_2]=\Upsilon[\tau].
\end{equation}
\end{lemma}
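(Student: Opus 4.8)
The natural approach is induction on the depth of the tree $\tau \in T_0$, mirroring the inductive structure of both $\Delta_r$ (Definition \ref{deltar}) and the elementary differentials (Section \ref{eldiff}). The base cases are $\tau = X^k\zeta_l$: for $\tau = \one$ or $\tau = X^k$ we have $\Delta_r X^k = \sum_j \binom{k}{j} X^j \otimes X^{k-j}$, and $\hat\Upsilon$ vanishes except on the $X^0 = \one$ factor on the right, where it equals $1$; combined with $\Upsilon[X^j] = \Upsilon[\zeta_l]$-type base values one recovers $\Upsilon[\tau]$. For $\tau = \Xi$ we have $\Delta_r\Xi = \Xig \otimes \Xi + \Xi \otimes \one$, and since $\hat\Upsilon[\Xi] = 1$ and $\Upsilon[\Xig] $ contributes the $f$-type functional while $\hat\Upsilon[\one]=1$ and $\Upsilon[\Xi]$ is that same functional, the two terms reconcile (the grey symbol $\Xig$ is precisely designed so that $\Upsilon$ reads it as the noise-type functional $\Upsilon^1$, matching $\Upsilon[\Xi]$). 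One should check carefully that only one of the two Sweedler terms actually survives in the relevant evaluation, or that they are genuinely equal — this is the first place a sign or bookkeeping slip could occur.

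For the inductive step on a planted tree $\mcI_a(\tau)$, I would expand
\[
\Delta_r \mcI_a(\tau) = (\mcI_a \otimes \mathrm{id})\Delta_r\tau + \sum_{k\in\N^{d+1}} \frac{1}{k!}\CIg_a(X^k)\otimes \mcI_{a+k}(\tau),
\]
apply $\Upsilon \otimes \hat\Upsilon$ to both summands, and use the definitions: on the first summand, $\Upsilon[\mcI_a(\tau_1)]$ contributes a $D_a$-derivative acting on the $\zeta_l$-functional together with $\Upsilon[\tau_1]$ (by \ref{defupsilon}, since $\mcI_a(\tau_1)$ is now a child of the new root), while $\hat\Upsilon[\tau_2] = \hat\Upsilon[\tau_2]$; on the second summand, $\Upsilon[\CIg_a(X^k)\cdots]$ produces the $\CIg_a$-derivative factor while $\hat\Upsilon[\mcI_{a+k}(\tau)] = \Upsilon[\tau]$. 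The induction hypothesis $\Upsilon[\tau_1]\hat\Upsilon[\tau_2] = \Upsilon[\tau]$ should then collapse the first sum, and one must verify that the $\CIg_a(X^k)$-contributions reassemble into the extra derivative structure of $\Upsilon[\mcI_a(\tau)]$ at the root. For the tree product $\sigma\tau$, one uses that $\Delta_r$, $\Upsilon$ (on a product of planted trees at a common root, via the product rule in the Leibniz structure of the differential operators), and $\hat\Upsilon$ are all multiplicative in the appropriate sense, so the statement factorizes.

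The step I expect to be the main obstacle is the planted-tree induction: reconciling the $\CIg_a(X^k)$ grey-symbol terms in $\Delta_r\mcI_a(\tau)$ with the polynomial-decoration contributions that $\Upsilon[\mcI_a(\tau)]$ generates through the $\D^k$ and $D_a$ operators at the new root. This is exactly the interplay captured by the formula $D_a\D^k = \sum_j \binom{k}{j}\partial^{k-j}D_{a-j}$ recalled from \cite{BB21b} inside the proof of Lemma \ref{lemmaupsilon}, and one will likely need an analogous combinatorial identity here, together with the observation that $\hat\Upsilon$ deliberately \emph{ignores} the root polynomial so that the $X^k$ inside $\CIg_a$ is only seen once (by $\Upsilon[\tau_1]$, not by $\hat\Upsilon[\tau_2]$). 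The "additional polynomial decoration" extension is then handled by the same computation with $X^m$ carried along at the root throughout, noting that by the definition of $\hat\Upsilon$ a root monomial contributes trivially ($\hat\Upsilon[X^m\zeta_l\cdots] = \prod_i\Upsilon[\tau_i]$, independent of $m$) and the $\Delta_r X_i$ rule distributes the monomial in the usual binomial fashion between the two legs, so the identity is stable under this enrichment.
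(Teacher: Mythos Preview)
Your induction strategy is sound, but the product step has a genuine gap. The identity does \emph{not} factorize over tree products: $\Upsilon$ is not multiplicative, and the induction hypotheses $\Upsilon[\sigma_1]\hat\Upsilon[\sigma_2]=\Upsilon[\sigma]$ and $\Upsilon[\tau_1]\hat\Upsilon[\tau_2]=\Upsilon[\tau]$ do not combine to give $\Upsilon[\sigma_1\tau_1]\hat\Upsilon[\sigma_2\tau_2]=\Upsilon[\sigma\tau]$. Concretely, with $\sigma=\mcI_a(\alpha)$ and $\tau=\mcI_b(\beta)$, the left Sweedler piece $\sigma_1\tau_1$ carries the joint root factor $D_aD_b\Upsilon^0$ inside $\Upsilon[\sigma_1\tau_1]$, whereas $\Upsilon[\sigma_1]\Upsilon[\tau_1]$ carries $(D_a\Upsilon^0)(D_b\Upsilon^0)$; no Leibniz-type rule bridges these. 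What the argument actually needs is the induction hypothesis on the \emph{subtrees} $\alpha,\beta$, not on the factors $\sigma,\tau$.

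The paper accordingly organises the induction differently: it treats a general tree $\tau=\zeta_l\prod_{i=1}^n\mcI_{a_i}(\tau^i)$ in one step, writing $\Delta_r\tau$ as a sum over subsets $I\subset\{1,\dots,n\}$ of branches that are cut into grey edges $\CIg_{a_i}(X^{k_i})$ while the uncut branches $j\in I^c$ recurse via $\Delta_r\tau^j$. For each such Sweedler term one has $\Upsilon[\tau_1]=D_{a_1}\cdots D_{a_n}\Upsilon[\zeta_l]\prod_{j\in I^c}\Upsilon[\tau_1^j]$ (all $n$ root derivatives appear regardless of whether the branch is $\mcI$ or $\CIg$) and $\hat\Upsilon[\tau_2]=\prod_{i\in I}\Upsilon[\tau^i]\prod_{j\in I^c}\hat\Upsilon[\tau_2^j]$; the product collapses to $\Upsilon[\tau]$ using only the hypothesis on the $\tau^j$. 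Your worry about reconciling the $\CIg_a(X^k)$ terms via the $D_a\D^k$ identity is misplaced: by the definition \eqref{defupsilon}, $\Upsilon$ reads only the index $a$ of a grey edge and ignores the $X^k$ on its leaf, so every Sweedler term gives exactly $\Upsilon[\tau]$ with no combinatorial reconciliation needed. (This also answers your uncertainty about the $\Xi$ base case: both Sweedler terms are genuinely equal to $\Upsilon[\Xi]$.) The polynomial extension is then immediate because $\hat\Upsilon$ ignores the root decoration entirely.
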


\begin{proof}
The statement is immediate on the elementary trees $\one$ and $\Xi$. We start by showing the result for a tree in $T_0$ of the form $\tau=\prod_{i=1}^n\mcI_{a_i}(\tau^i)$. We have,
\begin{equs}
\Delta_r\tau=\sum_{I\subset\{1,\dots, n\}}\sum_{\substack{k_i\in\N^{d+1}\\i\in I}}\sum_{\substack{(\tau^j)\\j\in I^c}}\prod_{i\in I}\frac{1}{k_i!}\CIg_{a_i}(X^{k_i})\prod_{j\in I^c}\mcI_{a_j}(\tau_1^j)\otimes\prod_{i\in I}\mcI_{a_i+k_i}(\tau^i)\prod_{j\in I^c}\tau_2^j
\end{equs}
Then
\begin{equs}
\Upsilon\left[\prod_{i\in I}\CIg_{a_i}(X^{k_i})\prod_{j\in I^c}\mcI_{a_j}(\tau_1^j)\right]=D_{a_1}\dots D_{a_n}\Upsilon[\one]\prod_{j\in I^c}\Upsilon[\tau_1^j],
\end{equs}
and
\begin{equs}
\hat\Upsilon\left[\prod_{i\in I}\mcI_{a_i+k_i}(\tau^i)\prod_{j\in I^c}\tau_2^j\right]=\prod_{i\in I}\Upsilon[\tau^i]\prod_{j\in I^c}\hat\Upsilon[\tau_2^j],
\end{equs}
so that the product of these two quantities is, using the induction,
\begin{align*}
&D_{a_1}\dots D_{a_n}\Upsilon[\one]\prod_{j\in I^c}\Upsilon[\tau_1^j]\hat\Upsilon[\tau_2^j]\prod_{i\in I}\Upsilon[\tau^i] \\ & =D_{a_1}\dots D_{a_n}\Upsilon[\one]\prod_{j\in I^c}\Upsilon[\tau^j]\prod_{i\in I}\Upsilon[\tau^i]\\
&=D_{a_1}\dots D_{a_n}\Upsilon[\one]\prod_{i=1}^n\Upsilon[\tau^i]=\Upsilon[\tau].
\end{align*}
The proof in the case where there is a symbol $\Xi$ at the root is the same but slightly longer to write. The full statement on $T_0^X$ follows in the light of \cite[Prop 2.2]{BB21b}. Since $\hat\Upsilon$ completely ignores the root of the input tree, the only difficulty that could appear because of the polynomial decoration is avoided. Thus, it suffices to follow the lines of the present proof. We chose not to do it for the sake of readability, but the essential arguments remain. 
\end{proof}
We turn to one of the main results of this section. We show how the evaluation map defined in Definition \ref{eval} can be localized, \textit{i.e.} the elementary differentials that were stuck in the convolutions are all taken out. By doing this, we exhibit a link with what is done by Hairer in regularity structures.
\begin{theorem}\label{localization}
For every $x\in[0,1]\times\T^d$, $\tau\in\mcT_0$ and $\phi$,
\begin{equation}\label{xneqy}
(\eval^R\tau)[\phi](y)=\Bigl(\Upsilon[\cdot][\phi](x)(\locevalx\cdot)(y)\Bigl)\Mloc\tau.
\end{equation}
The product between $\Upsilon$ and $\locevalx$ in the right hand side is defined by a simple product on $T_0^X$ and extended by linearity to $\mcT_0^X$. In particular,
\begin{equation}\label{xeqy}
(\eval^R\tau)[\phi](y)=\Bigl(\Upsilon[\cdot][\phi](y)(\loceval\cdot)(y)\Bigl)\Mloc\tau.
\end{equation}
\end{theorem}

\begin{remark}
Only the second equation \eqref{xeqy} will be useful to prove Theorem \ref{bphztheorem}, but the first version, where $x\neq y$, is necessary to prove it by induction.
\end{remark}

\begin{proof} 
We start with the two base cases. For $\tau=\one$, we first compute for $k\in\N^{d+1}$,
\begin{equs}
(\locevalx X^k)(y)=\sum_{j\in\N^{d+1}}\binom{k}{j}\ell(X^j)(\hloceval X^{k-j})(y)=(y-x)^k,
\end{equs}
so that 
\begin{align*}
&(\Upsilon[\cdot][\phi](x)\locevalx(y))(\Mloc\one)=\sum_{k\in\N^{d+1}}\frac{1}{k!}\D^k g(\phi(x),\dots,\nabla^q\phi(x))\locevalx X^k\\
&=\sum_{k\in\N^{d+1}}\frac{1}{k!}\D^k g(\phi(x),\dots,\nabla^q\phi(x))(y-x)^k=g(\phi(y),\dots,\nabla^q\phi(y))\\
&=(\eval^R\one)[\phi](y).
\end{align*}
We recall that the symbol $\D^k$ is on the function $x\mapsto g(\phi(x),\dots,\nabla^q(x))$, accordingly to the same notation on elementary differentials. The case $\tau=\Xi$ is dealt with in a similar manner.\\

We now show that, in order to be able to perform the induction,
\begin{equation*}
(\heval\tau)[\phi](y)=\Bigl(\hat\Upsilon[\cdot][\phi](x)(\hlocevalx\cdot)(y)\Bigl)\pi\Mloc\tau.
\end{equation*}

We have, using the notation $\Mloc\tau=\sum_i a_i\tau^i$
\begin{equs}
(\heval\mcI_a(\tau))[\phi](y)&=\int_{[0,1]\times\T^d}\D^a(G-G_\mu)(z-y)(\eval^R\tau)[\phi](z)dz\\
&=\int_{[0,1]\times\T^d}\D^a(G-G_\mu)(z-y)(\Upsilon[\cdot][\phi](x)\locevalx(z))(\Mloc\tau)dz\\
&=\sum_i a_i \Upsilon[\tau^i][\phi](x)\int_{[0,1]\times\T^d}\D^a(G-G_\mu)(z-y)(\locevalx\tau^i)(z)dz\\
&=\sum_i a_i \Upsilon[\tau^i][\phi](x)(\hlocevalx\mcI_a(\tau^i))(y)\\
&=\Bigl(\hat\Upsilon[\cdot][\phi](x)(\hlocevalx\cdot)(y)\Bigl)\pi\Mloc\mcI_a(\tau).
\end{equs}
It is immediate to show this same property on a product of the form $\sigma\tau$ since all the maps involved are multiplicative.

Before continuing, we need further definitions. Given a tree $\tau\in T_0^X$, we recall that the nodes set is denoted by $N_\tau$. We will make the confusion to also use this notation to denote the cardinality of this set. We define, for a $N_\tau$-uple $k_1,\dots,k_{N_\tau}$,
\begin{equs}
\uparrow_{N_\tau}^{k_1,\dots, k_{N_\tau}}\tau=\prod_{v\in N_\tau}\uparrow_v^{k_v}\tau,
\end{equs}
where $\uparrow_v^{k_v}$ increases the polynomial decoration of $k_v$ at the node $v$. We also define
\begin{equs}
\uparrow^k\tau=\sum_{\sum_{v\in N_\tau}k_v=k}\uparrow_{N_\tau}^{k_1,\dots, k_{N_\tau}}\tau.
\end{equs}
With these notation in hand, we can perform further computations. We will voluntarily forget to denote under the sums that all the parameters run over $\N^{d+1}$ for compactness of the equations. We take a tree $\tau\in T_0$. It is easy to see that
\begin{equs}
\Mloc\tau=\sum_{k\in\N^{d+1}}\frac{1}{k!}\uparrow^k\tau=\sum_{k_1,\dots,k_{N_{\tau}}}\frac{1}{k_1!\dots k_{N_\tau}!}\uparrow_{N_\tau}^{k_1,\dots, k_{N_\tau}}\tau.
\end{equs}
Then, one can note that, writing in Sweedler's notation $\Delta_r\tau=\sum_{(\tau)}\tau_1\otimes\tau_2$, we have, assuming that we sort the $k_1,\dots,k_{N_\tau}$'s as $k_1,\dots,k_{N_{\tau_1}}$ for $\tau_1$ and $l_2,\dots,l_{N_{\tau_2}}$ for $\tau_2$,
\begin{equs}
\Delta_r\Mloc\tau&=\sum_{\substack{k^1_1,k^2_1,\dots,k_{N_{\tau_1}}^1,k_{N_{\tau_1}}^2\\ l_1,\dots,l_{N_{\tau_2}}}}\frac{1}{k^1_1!k^2_1!\dots k_{N_{\tau_1}}^1!k_{N_{\tau_1}}^2! l_1!\dots l_{N_{\tau_2}}!}\\
&~~~~\times\sum_{(\tau)}\uparrow_{N_{\tau_1}}^{k_1^1,\dots, k_{N_\tau}^1}\tau_1\otimes X^{k_1^2+\dots +k_{N_{\tau_1}}^2}\uparrow_{N_{\tau_2}\backslash\text{\tiny{root}}}^{l_2,\dots, l_{N_\tau}}\tau_2.
\end{equs}
Note that we have started the labelling of the parameters $l$ at $2$ as we do not touch the root. We further have
\begin{equs}
(\mathrm{id}\otimes\pi)\Delta_r\Mloc\tau=\sum_{k_1,\dots,k_{N_{\tau}}}\frac{1}{k_1!\dots k_{N_\tau}!}\sum_{(\tau)}\uparrow_{N_{\tau_1}}^{k_1,\dots, k_{N_{\tau_1}}}\tau_1\otimes\uparrow_{N_{\tau_2}\backslash\text{\tiny{root}}}^{l_2,\dots, l_{N_{\tau_2}}}\tau_2
\end{equs}
It gives
\begin{equs}
&\Bigl(\ell\Upsilon[\cdot][\phi](y)\otimes\hat\Upsilon[\cdot][\phi](x)\hlocevalx\pi\Bigl)\Delta_r\Mloc\tau\\
&=\sum_{k_1,\dots,k_{N_{\tau}}}\frac{1}{k_1!\dots k_{N_{\tau_1}}!}\sum_{(\tau)}\ell\Bigl(\uparrow_{N_{\tau_1}}^{k_1,\dots, k_{N_{\tau_1}}}\tau_1\Bigl)\Upsilon\Bigl[\uparrow_{N_{\tau_1}}^{k_1,\dots, k_{N_{\tau_1}}}\tau_1\Bigl][\phi](y)\\
&~~~~~~~~\times\hat\Upsilon\Bigl[\uparrow_{N_{\tau_2}\backslash\text{\tiny{root}}}^{l_2,\dots, l_{N_{\tau_2}}}\tau_2\Bigl][\phi](x)\Bigl(\hlocevalx\uparrow_{N_{\tau_2}\backslash\text{\tiny{root}}}^{l_2,\dots, l_{N_{\tau_2}}}\tau_2\Bigl)(y)
\end{equs}
We apply a Taylor expansion on the term $\Upsilon\Bigl[\uparrow_{N_{\tau_1}}^{k_1,\dots, k_{N_\tau}}\tau_1\Bigl][\phi](y)$ to localize it in $x$. It is equal to
\begin{equs}
\sum_{m_1,\dots,m_{N_{\tau_1}}}\frac{1}{m_1!\dots m_{N_{\tau_1}}!}\Upsilon\Bigl[\uparrow_{N_{\tau_1}}^{k_1+m_1,\dots, k_{N_\tau}+m_{N_{\tau_1}}}\tau_1\Bigl][\phi](x)(y-x)^{m_1+\dots+m_{N_{\tau_1}}},
\end{equs}
Where we have used that for any $\sigma\in T_0^X$, $\D^k\Upsilon[\sigma]=\Upsilon[\uparrow^k\sigma]$. For a proof of this statement, see \cite[Prop 2.2]{BB21b}. Combining this with Lemma \ref{lemmaupsilondelta}, and with a re-parametrization, we get that
\begin{equs}
\Bigl(\ell\Upsilon[\cdot][\phi](y)\otimes\hat\Upsilon[\cdot][\phi](x)\hlocevalx\pi\Bigl)\Delta_r\Mloc\tau=\Bigl(\Upsilon[\cdot][\phi](x)\bigl((\ell\otimes\hlocevalx)\Delta_r\bigl)\Bigl)\Mloc\tau.
\end{equs}

In a nutshell, we recover with the Taylor expansion what $\pi$ removed. To conclude the proof, we combine all the elements proved above to get the following sequence of equalities,

\begin{equs}
(\eval^R\tau)[\phi](y)&=\Bigl(\ell\Upsilon[\cdot][\phi](y)\otimes(\heval\cdot)(y)\Bigl)(\Mloc\otimes\mathrm{id})\Delta_r\tau\\
\mathrm{(Induction)}&=\Bigl(\ell\Upsilon[\cdot][\phi](y)\otimes\hat\Upsilon[\cdot][\phi](x)(\hlocevalx\cdot)(y)\Bigl)(\Mloc\otimes\Mloc\pi)\Delta_r\tau\\
\mathrm{(Prop~\ref{Mlocdelta})}&=\Bigl(\ell\Upsilon[\cdot][\phi](y)\otimes\hat\Upsilon[\cdot][\phi](x)(\hlocevalx\pi\cdot)(y)\Bigl)\Delta_r\Mloc\tau\\
\mathrm{(Localization)}&=\Bigl(\Upsilon[\cdot][\phi](x)\bigl((\ell\otimes(\hlocevalx\cdot)(y))\Delta_r\bigl)\Bigl)\Mloc\tau\\
&=\Bigl(\Upsilon[\cdot][\phi](x)(\locevalx\cdot)(y)\Bigl)\Mloc\tau.
\end{equs}
\end{proof}

\begin{remark}
With this theorem, we can compare our ansatz with the one in regularity structures. We recall that, in this context, the force term is written at the level of modelled distributions as
\begin{equation}
\sum_{\deg(\tau)<\gamma}\frac{\Upsilon[\tau][u](x)}{S(\tau)}\Pi_{\eps,x}^R\tau,
\end{equation}
where the sum runs over trees with polynomial decorations, truncated at a given level $\gamma>0$. $\Pi_{\eps,x}^R$ is a renormalised model at a base point $x$. $u$ is a modelled distribution, representing the lift of the solution of the equation. With Theorem \ref{localization}, Lemma \ref{localisation_prop}, and the Ansatz \eqref{ansatz}, it is easy to see that, we get almost the same expression as in regularity structures, where recentering around a base point has been replaced with the use of the kernels $G-G_\mu$. Here the terms that have at least one edge vanish when $\mu=0$ where in regularity structures the model vanishes on trees with at least one positive subtree when evaluated at the base point. The main difference is that, in the flow approach, after localizing, one needs to keep full series to have non-local terms. \textit{A contrario}, in regularity structures, this localization step and the truncation that comes alongside is done at the very beginning of the procedure. The price to pay then is that one has to reconstruct these truncated expansions with the help of the reconstruction theorem.
\end{remark}

We can now state the subsequent result, which concludes the understanding of the algebraic renormalisation in the method.
\begin{theorem}\label{bphztheorem}
The renormalisation character used in the flow approach is the BPHZ character defined in \cite{BHZ} for regularity structures.
\end{theorem}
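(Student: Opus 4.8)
The plan is to identify the character $\ell$ of Definition \ref{eval} with the BPHZ character $\ell_{\BPHZ}=(\mathbb{E}\,\PPi_\eps)\circ\mathcal{A}_-$ of \cite{BHZ}, where $\mathcal{A}_-$ is the negative twisted antipode and $\PPi_\eps$ is the canonical lift of the mollified noise $\xi_\eps$. First I would recall the characterisation, due to \cite{BR18,BB21b}, that in the regularity-structures setting $\ell_{\BPHZ}$ is the unique character vanishing on trees of non-negative degree and satisfying $\mathbb{E}\big[(\hat{\Pi}^{R}_{x}\tau)(x)\big]=0$ for every tree $\tau$ of negative degree, where $\hat{\Pi}^{R}_{x}$ is the renormalised pre-model produced by the preparation map $R=(\ell\otimes\mathrm{id})\Delta^{-}$ restricted to root extractions. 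This is an implicit recursion: the right-hand side depends only on $\ell$ evaluated on strict subtrees, which pins $\ell_{\BPHZ}$ down uniquely.

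The second step is to use Theorem \ref{localization} — and more precisely the structure exhibited in its proof — to recognise the local evaluation map $\locevalx$ of the present paper as exactly such a preparation-map pre-model: it is obtained by alternating the multiplicative map $\hlocevalx$ with the preparation map $R=(\ell\otimes\hlocevalx)\Delta_r$, the only formal differences with \cite{BR18,BB21b} being that the convolution is against $\D^a(G-G_\mu)$ rather than a compactly supported truncation, and that $\Delta_r$ records, through the grey edges $\CIg_a(X^k)$, the Taylor data that an honest contraction would discard. For the latter point I would invoke that $\ell$ reads these symbols only through $\downg$, which sends $\CIg_a(X^k)$ to the monomial $X^k$ at the contraction node; this is precisely the bookkeeping performed by the extended decorations of \cite[Sec. 2.3]{BHZ}, so the recursion obtained by imposing $\mathbb{E}\big[(\locevalx\tau)(x)\big]=0$ coincides term by term with the twisted-antipode recursion defining $\ell_{\BPHZ}$. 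Consequently, setting $\ell=\ell_{\BPHZ}$ makes $\locevalx$ the BPHZ-renormalised pre-model attached to the kernel $G-G_\mu$, and by Theorem \ref{renormalised_equation} the renormalised equation is the BPHZ one of \cite{BCCH}; conversely, the stochastic estimates underpinning Duch's fixed-point argument force exactly this vanishing-expectation condition, so the character used in the flow approach must be $\ell_{\BPHZ}$.

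The step I expect to be the main obstacle is making the identification of $\Delta_r$ — together with its grey decorations — with the root-extraction part of the full BHZ negative coproduct completely rigorous, and in particular checking that the twisted-antipode recursion is reproduced verbatim once the extended decorations are matched. A secondary technical point is to justify that the character does not depend on $\mu$: using the support properties \eqref{cutscale}, the singular short-distance behaviour of $G-G_\mu$ agrees with that of the regularity-structures kernel up to a smooth remainder, and the smooth part contributes only terms of non-negative degree, which are killed by $\ell$; this also makes it possible to compare directly with the cumulant recursion of \cite[Sec. 8.1]{Duc21}. Once these bookkeeping matters are settled the equality $\ell=\ell_{\BPHZ}$, and hence the theorem, is immediate.
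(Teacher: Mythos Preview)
Your overall strategy is the same as the paper's: characterise $\ell_{\BPHZ}$ by the vanishing-expectation condition $\mathbb{E}[(\hat\Pi^R_x\tau)(x)]=0$ on negative-degree trees, recognise $\locevalx$ as a preparation-map pre-model, and then observe that the flow approach literature fixes its counterterms by precisely this condition. So the skeleton is right.

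However, you are making the argument harder than it needs to be, and this introduces a genuine loose end. Your ``secondary technical point'' about $\mu$-independence of the character, argued via the support properties \eqref{cutscale} and the claim that the smooth remainder contributes only non-negative-degree terms, is unnecessary and not obviously correct as stated (the smooth part of a kernel can perfectly well contribute to expectations of negative-degree objects; what you would actually need is that the \emph{difference} between two choices of truncation is absorbed by the freedom in the character, which is a different statement). The paper bypasses this entirely by evaluating at $\mu=1$: since $G_1=0$ one has $G-G_1=G$, so $\hat\Pi^R_{\eps,1,x}$ is \emph{literally} the renormalised pre-model of regularity structures built from the full kernel $G$, and the uniqueness statement \cite[Theorem~6.18]{BHZ} applies directly. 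No comparison of singular behaviours, no smooth remainders, no $\mu$-dependence argument.

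Similarly, your plan to match the grey decorations of $\Delta_r$ with the extended decorations of \cite{BHZ} and to reproduce the twisted-antipode recursion ``verbatim'' is more than is required. Once you are at $\mu=1$ with kernel $G$, the map $R=(\ell\otimes\mathrm{id})\Delta_r$ is already a preparation map in the sense of \cite{BR18}, and the characterisation of $\ell_{\BPHZ}$ via the zero-mean condition on the resulting pre-model is a black box you can cite. The grey edges are an artefact of the flow bookkeeping and play no role at this final step.
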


\begin{proof}
We recall from \cite[Theorem 6.18]{BHZ}, that there is a unique choice of $\ell$, called the BPHZ renormalisation and denoted by $\ell_{ \text{\tiny{BPHZ}}}$, such that for any tree $\tau\in T_0^X$ of negative degree,
\begin{equation}\label{zeromeanbphz}
\E\left[(\hat\Pi_{\eps,1}^R\tau)(0)\right]=0.
\end{equation}
where the map $R$ is given by
\begin{equation*}
	R = \left( \ell_{ \text{\tiny{BPHZ}}} \otimes \id \right) \Delta_r.
\end{equation*}
Indeed, we recall that $G-G_1$ corresponds to the compactly supported truncation of the heat kernel, generally denoted $K$ in the regularity structures literature, so that we recover the renormalised pre-model of regularity structures. It turns out that it is the choice made in the previous works on the approach. Concretely, this localization step comes up at the initialization of the flow. One writes, for a tree $\tau\in T_0$, 
\begin{equation}
\begin{split}
(\eval^R\tau)[\phi](y)&=\Bigl(\Upsilon[\cdot][\phi](y)(\loceval\cdot)(y)\Bigl)\Mloc\tau\\
&=\Bigl(\Upsilon[\cdot][\phi](y)(\loceval\cdot)(y)\Bigl)\pi_-\Mloc\tau+R_{\eps,\mu,\tau}[\phi](y),
\end{split}
\end{equation}
where $\pi_-$ is the projection from $\mcT_0^X$ to trees of negative degree, and $R_{\eps,\mu,\tau}$ is a remainder, of better regularity than the previous terms. At this step, a choice of counterterms has to be done for the local stochastic terms of the form $\loceval\sigma$, where $\sigma$ appears in $\pi_-\Mloc\tau$. The choice made in \cite{Duc21, Duc22, CF24a} corresponds exactly to equation \eqref{zeromeanbphz} (see for example \cite[Equation 4.8]{CF24a} in the context of multi-indices), thus concluding the proof.
\end{proof}

\begin{remark}
For many equations of interest, such as the $\phi^4_3$ and KPZ equations, it holds that $\pi_-\Mloc\tau=\tau$, so that trees containing polynomials do not appear in the counterterms. See for example the lecture notes \cite{Duc23} on an elliptic $\phi^4$ model.
\end{remark}

\end{document}